\DeclareMathAlphabet{\mathpzc}{OT1}{pzc}{m}{it}
\DeclareMathOperator{\supp}{supp}
\newcommand{\dist}{ {\textup{\textsf{d}}}_{z} }
\newcommand{\Sides}{\mathscr{S}}
\newcommand{\Ne}{\mathcal{N}}
\newcommand{\E}{\mathscr{E}}
\newcommand{\TheTitle}{A posteriori error estimates for the Stokes problem with singular sources}
\newcommand{\ShortTitle}{A posteriori error estimates with singular sources}
\newcommand{\TheAuthors}{A. Allendes, E.~Ot\'arola, A. J.~Salgado}
\headers{\ShortTitle}{\TheAuthors}
\title{{\TheTitle}\thanks{AA has been partially supported by CONICYT through FONDECYT project 1170579. EO has been partially supported by CONICYT through FONDECYT project 3160201. AJS has been partially supported by NSF grant DMS-1720213.}}
\author{
  Alejandro Allendes\thanks{Departamento de Matem\'atica, Universidad T\'ecnica Federico Santa Mar\'ia, Valpara\'iso, Chile.
    (\email{alejandro.allendes@usm.cl}, \url{http://aallendes.mat.utfsm.cl/}).}
    \and
  Enrique Ot\'arola\thanks{Departamento de Matem\'atica, Universidad T\'ecnica Federico Santa Mar\'ia, Valpara\'iso, Chile.
    (\email{enrique.otarola@usm.cl}, \url{http://eotarola.mat.utfsm.cl/}).}
  \and
  Abner J.~Salgado\thanks{Department of Mathematics, University of Tennessee, Knoxville, TN 37996, USA.
    (\email{asalgad1@utk.edu}, \url{http://www.math.utk.edu/\string~abnersg})}
}
\date{Draft version of \today.}
\begin{document}

\maketitle

\begin{abstract}
We propose a posteriori error estimators for classical low--order inf--sup stable and stabilized finite element approximations of the Stokes problem with singular sources in two and three dimensional Lipschitz, but not necessarily convex, polytopal domains. The designed error estimators are proven to be reliable and locally efficient. 
On the basis of these estimators we design a simple adaptive strategy that yields optimal rates of convergence for the numerical examples that we perform.
\end{abstract}

\begin{keywords}
A posteriori error estimates, Stokes equations, Dirac measures, Muckenhoupt weights.
\end{keywords}

\begin{AMS}
35Q35,         
35Q30,         
35R06,          
76Dxx,          
65N15,          
65N30,          
65N50.          
\end{AMS}

\section{Introduction}
\label{sec:intro}

The purpose of this work is the design and analysis of a posteriori error estimates for low--order inf--sup stable and stabilized finite element approximations of the Stokes problem
\begin{equation}
\label{eq:StokesStrong}
  \begin{dcases}
      -\Delta \bu + \GRAD p = \bF \delta_z, & \text{in } \Omega, \\
      \DIV \bu = 0, & \text{in } \Omega, \\
      \bu = 0, & \text{on } \partial\Omega,
  \end{dcases}
\end{equation}
where, for $d \in \{2,3\}$, $\Omega$ denotes a bounded polytope of $\R^d$ with Lipschitz boundary, $\delta_z$ corresponds to the Dirac delta supported at the interior point $z \in \Omega$ and $\bF \in \mathbb{R}^d$.

As it is well known, system \eqref{eq:StokesStrong} is one of the simplest systems of equations that describes the motion of an incompressible fluid. Here $\bu$ represents the velocity of the fluid, $p$ the pressure, and  $\bF \delta_z$ is an externally applied force. Notice that, for simplicity, we have taken the viscosity to be equal to one. The first equation represents the conservation of momentum and the second one (incompressibility) the conservation of mass. 

While it is fair to say that the study of approximation techniques for \eqref{eq:StokesStrong} and related models in a standard setting is matured and well understood \cite{Guermond-Ern,MR851383}, recent applications and models have emerged where the motion of a fluid is described by \eqref{eq:StokesStrong} or a small variation of it, but due to the material properties (encoded by the viscosity) or, as is our interest here, the singularity of forces $\bF \delta_z$, the problem must be understood in a completely different setting and rigorous approximation techniques are nonexistent. For instance, \cite{Lacouture2015187} models the motion of active thin structures by using a generalization of \eqref{eq:StokesStrong}, where the right hand side is a linear combination of the terms we have there. The author of this work proposes a numerical scheme but its stability and convergence properties are not investigated. Another instance where a singular force like that of \eqref{eq:StokesStrong} may occur, see \cite{MR3679932} and \cite{BrettElliott,MR3449612}, is in a PDE constrained optimization problem where the state is governed by a standard Stokes problem, but the objective contains a point value of $\bu$. The idea in this problem is that one tries to optimize the flow profile so as to match the velocity at a certain point. If one were to write the optimality conditions for this problem the so-called adjoint variable will be governed by a slight modification of \eqref{eq:StokesStrong} where $z$ is the observation point. Finally, \cite{MR3582412} studies a class of asymptotically Newtonian fluids (Newtonian under large shear rates) under singular forcing. The authors show existence and uniqueness as well as some regularity results. In this respect, our work can be understood as an initial step towards the a posteriori error estimation of such fluids.

The examples presented above justify the need to develop robust numerical methods for the numerical approximation of solutions to \eqref{eq:StokesStrong}, and this is the purpose of this work. The key observation that will allow us to handle the singularity in this problem is that there is a Muckenhoupt weight $\omega$, related to the distance to $z$, such that $\delta_z \in H^{-1}(\omega,\Omega)$. In light of this, we propose to study numerical methods in Muckenhoupt weighted Sobolev spaces. However, this will require us to understand the discrete problem as a generalized saddle point problem \cite{MR972452}, \ie one for which the solution and test spaces do not coincide. In spite of this, we are able to develop a posteriori error estimators.
 
We finally comment that, since $\delta_z$ is very singular, it is not expected for the pair $(\bu,p)$ to have any regularity properties beyond those merely needed for the problem to be well--posed. For this reason a priori error estimates in their natural norms might not carry much value in this setting. It might be possible however, using duality techniques, to obtain error estimates in lower order norms. This will be deferred to a future study.

Our presentation is organized as follows. We set notation in Section \ref{sec:prelim}, where we also recall basic facts about weights and introduce the weighted spaces we shall work with. In Section  \ref{sec:Stokes}, we introduce a saddle point formulation of the Stokes problem \eqref{eq:StokesStrong} and review well--posedness results. Section \ref{sec:fem} presents basic ingredients of finite element methods. Section \ref{sec:inf_sup_stable_a_posteriori} is one of the highlights of our work. There we propose an a posteriori error estimator for inf--sup stable finite element approximations of the Stokes problem \eqref{eq:StokesStrong}; the devised error estimator is proven to be locally efficient and globally reliable. In Section \ref{sec:stabilized_fem} we extend the results obtained in Section \ref{sec:inf_sup_stable_a_posteriori} to the case when stabilized finite element approximations are considered. We conclude, in Section \ref{sec:numerics}, with a series of numerical experiments that illustrate our theory.

\section{Notation and preliminaries}
\label{sec:prelim}

Let us fix the notation and conventions in which we will operate. Throughout this work $d \in \{2,3\}$ and $\Omega\subset\mathbb{R}^d$ is an open and bounded polytopal domain with Lipschitz boundary $\partial\Omega$. Notice that we do not assume that $\Omega$ is convex. If $\mathcal{W}$ and $\mathcal{Z}$ are Banach function spaces, we write $\mathcal{W} \hookrightarrow \mathcal{Z}$ to denote that $\mathcal{W}$ is continuously embedded in $\mathcal{Z}$. We denote by $\mathcal{W}'$ and $\|\cdot\|_{\mathcal{W}}$ the dual and the norm of $\mathcal{W}$, respectively.

For $E \subset \bar\Omega$ of finite Hausdorff $i$-dimension, $i \in \{1,\ldots,d \}$,
we denote its measure by $|E|$. If $E$ is such a set and $f : E \to \R$ we denote its mean value by
\[
 \fint_E f  = \frac{1}{|E|}\int_{E} f .
\]

The relation $a \lesssim b$ indicates that $a \leq C b$, with a constant $C$ that depends neither on $a$, $b$ nor the discretization parameters. The value of $C$ might change at each occurrence.

\subsection{Weights and weighted spaces}
By a weight, we shall mean a locally integrable function $\omega$ on $\mathbb{R}^d$ such that $\omega(x)> 0$ a.e. $x \in \mathbb{R}^d$. A particular class of weights, that will be of importance in our work, is the so--called Muckenhoupt class $A_2$ \cite{MR0293384}: If $\omega$ is a weight, we say that $\omega \in A_2$ if
\begin{equation}
\label{A_2class}
[\omega]_{A_2} := \sup_{B} \left( \fint_{B} \omega \right) \left( \fint_{B} \omega^{-1} \right)  < \infty,
\end{equation}
where the supremum is taken over all balls $B$ in $\R^d$. In what follows, for $\omega \in A_2$, we call $[\omega]_{A_2}$ the \emph{Muckenhoupt characteristic} of $\omega$.

We refer the reader to \cite{MR1800316,MR2491902,NOS3,MR1774162} for the basic facts about Muckenhoupt classes and the ensuing weighted spaces. Here we only mention an example of an $A_2$ weight which will be essential in the analysis presented below. Let $z \in \Omega$ be an interior point of $\Omega$ and, for $\alpha \in \R$, define
\begin{equation}
\label{distance_A2}
\dist^\alpha(x) = |x-z|^{\alpha}.
\end{equation}
We then have that $\dist^\alpha \in A_2$ provided that $\alpha \in (-d,d)$.

It is also important to notice that, in the previous example, since $z \in \Omega$ there is a neighborhood of $\partial\Omega$ where the weight $\dist^{\alpha}$ has no degeneracies or singularities. In fact, it is continuous and strictly positive. Consequently, we have that the weight $\dist^{\alpha}$ belongs to the class $A_2(\Omega)$, introduced in \cite[Definition 2.5]{MR1601373}, and which we define as follows.

\begin{definition}[class $A_2(\Omega)$]
\label{def:ApOmega}
Let $\Omega \subset \R^d$ be a Lipschitz domain. We say that $\omega \in A_2$ belongs to $A_2(\Omega)$ if there is an open set $\calG \subset \Omega$, and positive constants $\varepsilon>0$ and $\omega_l>0$ such that:
\begin{enumerate}[1.]
  \item $\{ x \in \Omega: \mathrm{dist}(x,\partial\Omega)< \varepsilon\} \subset \calG$,
  
  \item $\omega|_{\bar\calG} \in C(\bar\calG)$, and
  
  \item $\omega_l \leq \omega(x)$ for all $x \in \bar\calG$. 
\end{enumerate}
\end{definition}

The fact that $\dist^{\alpha}$ belongs to the restricted class $A_2(\Omega)$ has been shown to be crucial in the analysis of  \cite{OS:17infsup} that guarantees the well--posedness of problem \eqref{eq:StokesStrong} in weighted Sobolev spaces. We will recall these facts in Section~\ref{sec:Stokes}.

For $\alpha \in (-d,d)$ and an open set $E \subseteq \Omega$, we define
\[
 L^2(\dist^{\pm\alpha},E):= \left\{ v \in L^1_{\mathrm{loc}}(E): \| v \|_{L^2(\dist^{\pm\alpha},E)}:= \left( \int_{E} \dist^{\pm\alpha} |v|^2 \right)^{\frac{1}{2}} < \infty \right \}
\]
and
\[
 H^1(\dist^{\pm\alpha},E):= \{ v \in L^2(\dist^{\pm\alpha},E): | \nabla v| \in L^2(\dist^{\pm\alpha},E)\}
\]
with norm 
\begin{equation}
\label{eq:norm}
 \| v \|_{H^1(\dist^{\pm\alpha},E)}:= \left(  \| v \|_{L^2(\dist^{\pm\alpha},E)}^2 +  \| \nabla v \|_{L^2(\dist^{\pm\alpha},E)}^2  \right)^{\frac{1}{2}}.
\end{equation}
We also define $H_0^1(\dist^{\pm\alpha},E)$ as the closure of $C_0^{\infty}(E)$ in $H^1(\dist^{\pm\alpha},E)$. In view of the fact that, for $\alpha \in (-d,d)$, the weight $\dist^{\pm\alpha}$ belongs to $A_2$, we conclude that the spaces $L^2(\dist^{\pm\alpha},E)$ and $H^1(\dist^{\pm\alpha},E)$ are Hilbert \cite[Proposition 2.1.2]{MR1774162} and that smooth functions are dense \cite[Corollary 2.1.6]{MR1774162}; see also \cite[Theorem 1]{MR2491902}. In addition, \cite[Theorem 1.3]{MR643158} guarantees the existence of a weighted Poincar\'e inequality which, in turn, implies that over $H^1_0(\dist^{\pm\alpha},\Omega)$ the seminorm $\| \nabla v \|_{L^2(\dist^{\pm\alpha},\Omega)}$ is an equivalent norm to the one defined in \eqref{eq:norm} for $E = \Omega$. We also introduce the weighted space of vector--valued functions and the norm
\[
 \bH_0^1(\dist^{\pm\alpha},E) = [ H_0^1(\dist^{\pm\alpha},E) ]^d, \quad \| \GRAD \bv \|_{\bL^2(\dist^{\pm\alpha},E)}:= \left( \sum_{i=1}^d \| \nabla v_i \|^2_{L^2(\dist^{\pm\alpha},E)} \right)^{\frac{1}{2}},
\]
where $\bv = (v_1,\dots,v_d)^\intercal$. 


For $\alpha \in (-d,d)$ we also introduce the product spaces 
\begin{equation}
\label{XandY}
\mathcal{X}(E) = \bH^{1}_0(\dist^{\alpha},E) \times L^2(\dist^{\alpha},E)/ \mathbb{R}, \quad
\mathcal{Y}(E) = \bH^{1}_0(\dist^{-\alpha},E) \times L^2(\dist^{-\alpha},E)/ \mathbb{R},
\end{equation}
which we endow with the norms
\begin{equation}
\label{normX}
\| (\bw,r) \|_{\mathcal{X}(E)} = \left( \| \GRAD \bw \|^2_{\bL^2(\dist^{\alpha},E)} + \| r \|^2_{L^2(\dist^{\alpha},E)/ \mathbb{R}} \right)^{\frac{1}{2}}
\end{equation}
and
\begin{equation}
\label{normY}
\| (\bv,q) \|_{\mathcal{Y}(E)} = \left( \| \GRAD \bv \|^2_{\bL^2(\dist^{-\alpha},E)} + \| q \|^2_{L^2(\dist^{-\alpha},E)/ \mathbb{R}} \right)^{\frac{1}{2}},
\end{equation}
respectively. When $E = \Omega$, and in order to simplify the presentation of the material, we write $\mathcal{X} = \mathcal{X}(\Omega)$ and $\mathcal{Y} = \mathcal{Y}(\Omega)$.

The following continuous embedding results will be instrumental in the analysis that follows. 

\begin{proposition}[continuous embeddings I]
\label{pro:ce_I}
Let $E$ be an open subset of $\Omega \subset \mathbb{R}^d$ and $\alpha \in (-d,d)$. Then, we have the following continuous embeddings 
 \begin{equation}
\label{eq:embedding_L1}
\bL^2(\dist^{-\alpha},E) \hookrightarrow \bL^1_{\mathrm{loc}}(E), \qquad \bL^2(\dist^{\alpha},E) \hookrightarrow \bL^1_{\mathrm{loc}}(E).
\end{equation}
\end{proposition}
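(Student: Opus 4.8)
The plan is to obtain both embeddings in \eqref{eq:embedding_L1} from one elementary Cauchy--Schwarz estimate on compact subsets, the point being that the \emph{reciprocal} of each of the two weights is again a power of $\dist$ whose exponent still lies in the admissible range $(-d,d)$ and is therefore locally integrable.

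Concretely, I would fix a compact set $K \subset E$ and let $\beta \in \{\alpha,-\alpha\}$. For $\bv = (v_1,\dots,v_d)^\intercal \in \bL^2(\dist^{\beta},E)$ I would split, componentwise,
\[
  \int_K |v_i| \;=\; \int_K \bigl( |v_i|\,\dist^{\beta/2} \bigr)\,\dist^{-\beta/2}
  \;\le\; \left( \int_K \dist^{\beta}|v_i|^2 \right)^{\!\frac12}\left( \int_K \dist^{-\beta} \right)^{\!\frac12},
\]
bounding the first factor by $\|v_i\|_{L^2(\dist^{\beta},E)}$. The second factor is finite: since $\alpha \in (-d,d)$ we also have $-\beta \in (-d,d)$, so by \eqref{distance_A2} the function $\dist^{-\beta} = |\cdot-z|^{-\beta}$ belongs to $A_2$; being an $A_2$ weight it is, in particular, locally integrable, whence $\int_K \dist^{-\beta} < \infty$. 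Summing over $i = 1,\dots,d$ then yields
\[
  \|\bv\|_{\bL^1(K)} \;\le\; \left( \int_K \dist^{-\beta} \right)^{\!\frac12}\,\|\bv\|_{\bL^2(\dist^{\beta},E)},
\]
with a constant depending only on $K$, $\alpha$ and $d$.

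Since $K$ is an arbitrary compact subset of $E$, the last display is precisely the continuity of the inclusion map $\bL^2(\dist^{\beta},E) \hookrightarrow \bL^1_{\mathrm{loc}}(E)$; applying it with $\beta = -\alpha$ and with $\beta = \alpha$ gives the two claimed embeddings. I do not expect any genuine obstacle: the only thing to keep an eye on is that $z$ may lie in $E$ (recall $z\in\Omega$ and $E\subseteq\Omega$), so that the finiteness of $\int_K \dist^{-\beta}$ truly relies on the strict inequality $|\beta|<d$, and not merely on continuity of the weight away from $z$.
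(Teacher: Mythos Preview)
Your proof is correct and follows essentially the same route as the paper: a Cauchy--Schwarz splitting $|\bv| = \dist^{\beta/2}|\bv|\cdot\dist^{-\beta/2}$ on a compact subset (the paper uses a ball $B\subset E$), followed by the observation that $\dist^{-\beta}$ is locally integrable because $-\beta\in(-d,d)$. The only cosmetic differences are that you treat both signs at once via $\beta\in\{\alpha,-\alpha\}$ and write the argument componentwise, whereas the paper does one case and declares the other ``similar''.
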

\begin{proof}
Let $\bv \in \bL^2(\dist^{\alpha},E)$ and $B \subset E$ be a ball. A trivial application of the Cauchy--Schwartz inequality reveals that
\[
 \int_{B} |\bv| =  \int_{B} \dist^{\frac{\alpha}{2}} |\bv| \dist^{-\frac{\alpha}{2}} \leq  \left( \int_{B} \dist^{\alpha} |\bv|^2 \right)^{\frac{1}{2}} \left( \int_{B} \dist^{-\alpha}\right)^{\frac{1}{2}} \lesssim \left( \int_{B} \dist^{\alpha} |\bv|^2 \right)^{\frac{1}{2}},
\]
where, to obtain the last inequality, we have used that, since $\alpha \in (-d,d)$, $\dist^{-\alpha}$ is a weight, \ie a nonnegative and locally integrable function. This yields the continuous embedding $\bL^2(\dist^{\alpha},E) \hookrightarrow \bL^1_{\mathrm{loc}}(E)$. The proof of the continuous embedding $\bL^2(\dist^{-\alpha},E) \hookrightarrow \bL^1_{\mathrm{loc}}(E)$ is similar.
\end{proof}

\begin{proposition}[continuous embeddings II]
\label{pro:ce_II}
Let $E$ be an open subset of $\Omega \subset \mathbb{R}^d$. If $\alpha \in (0,d)$, then we have the following continuous embeddings 
\begin{equation}
\label{eq:embedding_H1}
\bH_0^1(\dist^{-\alpha},E) \hookrightarrow \bH_0^1(E) \hookrightarrow \bH_0^1(\dist^{\alpha},E).
\end{equation}
\end{proposition}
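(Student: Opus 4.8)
The plan is to exploit that $\Omega$ is bounded: setting $R:=\sup_{x\in\bar\Omega}|x-z|<\infty$, the hypothesis $\alpha>0$ gives the two pointwise bounds $\dist^{\alpha}(x)=|x-z|^{\alpha}\le R^{\alpha}$ and $\dist^{-\alpha}(x)=|x-z|^{-\alpha}\ge R^{-\alpha}$ for every $x\in\Omega$, hence for every $x\in E$. Note that we never need a lower bound on $|x-z|$, so it is irrelevant whether $z$ lies in $E$, on $\partial E$, or outside $\bar E$. It suffices to prove the scalar embeddings $H_0^1(\dist^{-\alpha},E)\hookrightarrow H_0^1(E)\hookrightarrow H_0^1(\dist^{\alpha},E)$; the vector-valued statement then follows by applying them componentwise and summing the squared component norms.

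First I would establish the corresponding chain at the level of the \emph{non}-zero-trace spaces. For $v\in L^1_{\mathrm{loc}}(E)$, the bound $\dist^{\alpha}\le R^{\alpha}$ yields $\int_E\dist^{\alpha}|v|^2\le R^{\alpha}\int_E|v|^2$, i.e. $\|v\|_{L^2(\dist^{\alpha},E)}\le R^{\alpha/2}\|v\|_{L^2(E)}$, while $\dist^{-\alpha}\ge R^{-\alpha}$ gives $R^{-\alpha}\int_E|v|^2\le\int_E\dist^{-\alpha}|v|^2$, i.e. $\|v\|_{L^2(E)}\le R^{\alpha/2}\|v\|_{L^2(\dist^{-\alpha},E)}$. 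Applying these two inequalities to $v$ and to each $\partial_i v$ and recalling the definition \eqref{eq:norm} of the norms gives the continuous inclusions $H^1(\dist^{-\alpha},E)\hookrightarrow H^1(E)\hookrightarrow H^1(\dist^{\alpha},E)$, with embedding constants depending only on $R$ and $\alpha$.

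It remains to upgrade these to the spaces with vanishing trace, and this is the only step requiring a little care. By definition each of $H_0^1(\dist^{-\alpha},E)$, $H_0^1(E)$, $H_0^1(\dist^{\alpha},E)$ is the closure of $C_0^{\infty}(E)$ in the respective norm. Given $v\in H_0^1(\dist^{-\alpha},E)$, choose $\{\varphi_n\}\subset C_0^{\infty}(E)$ with $\varphi_n\to v$ in $H^1(\dist^{-\alpha},E)$. The inequality $\|\cdot\|_{H^1(E)}\le R^{\alpha/2}\|\cdot\|_{H^1(\dist^{-\alpha},E)}$ shows $\{\varphi_n\}$ is Cauchy in $H^1(E)$, hence convergent there to some $w\in H_0^1(E)$; the same inequality at the $L^2$ level forces $\varphi_n\to v$ in $L^2(E)$, so $w=v$, and thus $v\in H_0^1(E)$ with $\|v\|_{H^1(E)}\le R^{\alpha/2}\|v\|_{H^1(\dist^{-\alpha},E)}$. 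The inclusion $H_0^1(E)\hookrightarrow H_0^1(\dist^{\alpha},E)$ follows identically, now using $\|\cdot\|_{H^1(\dist^{\alpha},E)}\le R^{\alpha/2}\|\cdot\|_{H^1(E)}$ to transport a sequence of mollifications. Passing back to vector fields via \eqref{eq:norm} and the definition of $\bH_0^1(\dist^{\pm\alpha},E)$ concludes the argument.

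I do not expect a real obstacle here: the norm comparisons are immediate from the boundedness of $\Omega$ and the sign of $\alpha$, and the only genuinely substantive bookkeeping is the density/closure argument in the last paragraph, namely verifying that a sequence witnessing membership of $v$ in one $H_0^1$-type space also witnesses its membership in the target space and converges there to the \emph{same} limit. That is exactly what the two-sided norm domination provides.
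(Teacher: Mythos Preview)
Your proof is correct and follows essentially the same approach as the paper: both exploit the boundedness of $\Omega$ together with $\alpha>0$ to obtain the pointwise bound $\dist^{\alpha}\le R^{\alpha}$ (equivalently $\dist^{-\alpha}\ge R^{-\alpha}$), derive the resulting norm comparisons on $C_0^{\infty}(E)$, and then conclude by density. Your write-up is simply more explicit than the paper's --- you treat both the $L^2$ and gradient parts of the norm and spell out why the limits in the density argument coincide, whereas the paper records only the gradient inequality and dispatches the rest with the phrase ``a density argument.''
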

\begin{proof}
The proof follows from the fact that, since $\alpha \in (0,d)$, then $\max_{x \in E} \dist^{\alpha}(x)$ is uniformly bounded. In fact, for $\bv \in \mathbf{C}_0^{\infty}(E)$, we have that
\[
\int_{E} | \GRAD \bv|^2 \leq \max_{x \in E} \dist^{\alpha}(x) \int_{E} \dist^{-\alpha} | \GRAD \bv|^2,
\qquad
\int_{E} \dist^{\alpha} | \GRAD \bv|^2 \leq \max_{x \in E} \dist^{\alpha}(x) \int_{E} | \GRAD \bv|^2.
\]
The embeddings described in \eqref{eq:embedding_H1} now follow from a density argument. This concludes the proof.
\end{proof}

\section{The Stokes problem with Dirac sources}
\label{sec:Stokes}

Having set up the needed functional setting, we now begin with the systematic study of problem \eqref{eq:StokesStrong}. First, we provide a motivation for the use of weights.

\subsection{Motivation}

Let us assume that \eqref{eq:StokesStrong} is posed over the whole space $\R^d$. If that is the case, the results of \cite[Section IV.2]{MR2808162} provide the following asymptotic behavior of the solution $(\bu, p)$ to problem \eqref{eq:StokesStrong} near the point $z$:
\begin{equation}\label{asympt-x0}
| \nabla \bu (x) | \approx |x-z|^{1-d} \quad \textrm{and} \quad | p(x) | \approx |x-z|^{1-d}.
\end{equation}
On the basis of these asymptotic estimates, basic computations reveal that 
\[
\alpha \in (d-2,\infty) \implies
\int_{\Omega} \dist^{\alpha} | \nabla \bu |^2 < \infty, 
\quad
\int_{\Omega} \dist^{\alpha} | p |^2 < \infty.
\]
This heuristic suggests to seek solutions to problem \eqref{eq:StokesStrong} in weighted Sobolev spaces \cite{MR3582412,OS:17infsup}. In what follows we will make these considerations rigorous.
 
\subsection{Saddle point formulation}
 
The motivation of the previous paragraph suggests that we seek for solutions of \eqref{eq:StokesStrong} in the weighted spaces defined in Section~\ref{sec:prelim}.
To accomplish this task, we define the bilinear forms
\begin{equation}
\label{eq:defofforma}
\begin{aligned}
    a &: \bH^1_0(\dist^{\alpha},\Omega) \times \bH^1_0(\dist^{-\alpha},\Omega) \to \R, \\
    a(\bw,\bv) &:= \int_\Omega \nabla \bw : \nabla \bv = \sum_{i=1}^d \int_{\Omega}  \nabla \bw_i \cdot \nabla \bv_i
  \end{aligned}
\end{equation}
and
\begin{equation}
\label{eq:defofformb}
 \begin{aligned}
    b &: \bH^1_0(\dist^{\pm\alpha},\Omega)\times L^2(\dist^{\mp\alpha},\Omega) \to \R, \\
    b(\bv,q) &:=  - \int_{\Omega} q \DIV \bv.
  \end{aligned}
\end{equation}

The weak formulation of problem \eqref{eq:StokesStrong} that we shall consider is: Find $(\bu,p) \in \bH^1_0(\dist^{\alpha},\Omega) \times L^2(\dist^{\alpha},\Omega)/\R$ such that
\begin{equation}
\label{eq:StokesWeak}
  \begin{dcases}
  a(\bu,\bv) + b(\bv,p) =  \langle \bF \delta_z, \bv \rangle,  & \forall \bv \in \bH^1_0(\dist^{-\alpha},\Omega), \\
  b(\bu,q) = 0, &\forall q \in L^2(\dist^{-\alpha},\Omega)/\R,
  \end{dcases}
\end{equation}
where $\langle \cdot, \cdot \rangle$ denotes the duality pairing between $\bH^{1}_0(\dist^{-\alpha},\Omega)'$ and $\bH^{1}_0(\dist^{-\alpha},\Omega)$. We must immediately comment that, in order to guarantee that $\delta_{z} \in H_0^1(\dist^{-\alpha},\Omega)'$, and thus that $\langle \bF \delta_z, \bv \rangle$ is well--defined for $\bv \in \bH^1_0(\dist^{-\alpha},\Omega)$, the parameter $\alpha$ should be restricted to belong to the interval $(d-2,d)$ \cite[Lemma 7.1.3]{KMR}.

We now present an alternative weak formulation for problem \eqref{eq:StokesStrong}. To accomplish this task, we define the bilinear form $c: \mathcal{X} \times \mathcal{Y} \rightarrow \mathbb{R}$ by
\begin{equation}
\label{def:c}
c((\bw,r),(\bv,q)):= a(\bw,\bv) + b(\bv,r) - b(\bw,q)
\end{equation}
with norm
\begin{equation}
\label{eq:c_norm}
 \| c \| = \sup_{ (\boldsymbol{0},0) \neq (\bw,r) \in \mathcal{X} }  \sup_{ (\boldsymbol{0},0) \neq (\bv,q) \in \mathcal{Y} }\frac{c((\bw,r),(\bv,q))}{ \| (\bw,r) \|_{\mathcal{X}}  \|  (\bv,q)\|_{\mathcal{Y}}},
\end{equation}
where the product spaces $\mathcal{X}$ and $\mathcal{Y}$ were defined in \eqref{XandY}.

The aforementioned alternative weak formulation of problem \eqref{eq:StokesStrong} thus reads as follows: Find $(\bu,p) \in \mathcal{X}$
such that
\begin{equation}
\label{eq:StokesWeakAdd}
c((\bu,p),(\bv,q)) =  \langle \bF \delta_z, \bv \rangle \quad \forall (\bv,q) \in \mathcal{Y}.
\end{equation}

It has been recently proved in \cite{OS:17infsup} that, since $\dist^\alpha \in A_2(\Omega)$, problem \eqref{eq:StokesWeakAdd} admits a unique solution $(\bu,p) \in \mathcal{X} = \bH^1_0(\dist^{\alpha},\Omega) \times L^2(\dist^{\alpha},\Omega)/\R$ for $\alpha \in (d-2,d)$. In addition, the following a priori error estimate was also derived in \cite[Theorem 14]{OS:17infsup}:
\begin{equation}
\label{eq:aprioristokes}
  \| \GRAD \bu \|_{\bL^2(\dist^{\alpha},\Omega)} + \| p \|_{L^2(\dist^{\alpha},\Omega)/\R} \lesssim 
   | \bF |  \| \delta_z \|_{\bH_0^1(\dist^{\alpha},\Omega)'}.
\end{equation}
With such a well--posedness result at hand, we can thus invoke a result by Ne\v cas, see \cite[Th\'eor\`eme 6.3.1]{MR0227584}, \cite[Th\'eor\`emes 3.1 et 3.2]{MR0163054}, and \cite[Theorem 2.2 and Corollary 2.1]{NSV:09}, to conclude the existence of a constant $\beta>0$ such that
\begin{multline}
\label{eq:infsup}
  \inf_{ (\boldsymbol{0},0) \neq (\bw,r) \in \mathcal{X} }  \sup_{ (\boldsymbol{0},0) \neq (\bv,q) \in \mathcal{Y} } \frac{c((\bw,r),(\bv,q))}{ \| (\bw,r) \|_{\mathcal{X}}  \|  (\bv,q)\|_{\mathcal{Y}} }
  =  \\
  \inf_{ (\boldsymbol{0},0) \neq (\bv,q) \in \mathcal{Y}  } \sup_{ (\boldsymbol{0},0) \neq (\bw,r) \in \mathcal{X}  } \frac{c((\bw,r),(\bv,q))}{ \| (\bw,r) \|_{\mathcal{X}}  \| (\bv,q) \|_{\mathcal{Y}} } = \beta.  
\end{multline}

\section{Triangulation}
\label{sec:fem}

Having shown the well-posedness of \eqref{eq:StokesWeak}, we can now begin with its numerical approximation and the analysis of the ensuing methods. We first introduce some terminology and a few basic ingredients that will be common to all our methods.

We consider $\T = \{T\}$ to be a conforming partition of $\bar\Omega$ into closed simplices $T$ with size $h_T = \diam(T)$ and define $h_{\T} = \max_{T \in \T} h_T$.  We denote by $\Tr$ the collection of conforming and shape regular meshes that are refinements of an initial mesh $\T_0$ \cite{CiarletBook,Guermond-Ern}.

We denote by $\Sides$ the set of internal ($d-1$)-dimensional interelement boundaries $S$ of $\T$. For $S \in \Sides$, we indicate by $h_S$ the diameter of $S$. If $T\in\T$, we define $\Sides_{T}$ as the subset of $\Sides$ that contains the sides of $T$. For $S \in \Sides$, we set $\Ne_S = \{ T^+, T^-\}$, where $T^+, T^- \in \T$ are such that $S=T^+ \cap T^-$, in other words, $\Ne_S$ denotes the subset of $\T$ that contains the two elements of $\T$ that have $S$ as a side. For $T \in \T$, we define the following \emph{stars} or \emph{patches} associated with the element $T$
\begin{equation}
  \Ne_T := \left\{ T' \in \T : \Sides_T \cap \Sides_{T'} \neq \emptyset \right\}.
  \label{eq:NeTstar}
\end{equation}
and
\begin{equation}
  \label{eq:STstar}
 \mathcal{S} _T := \bigcup_{T' \in \T : T \cap T' \neq \emptyset} T'.
\end{equation}

Having defined our mesh we introduce two classes of finite element approximations, the ensuing finite element schemes, and provide an analysis for them.

\section{Inf--sup stable finite element spaces}
\label{sec:inf_sup_stable_a_posteriori}
In the literature, several finite element approximations have been proposed and analyzed to approximate the solution to the Stokes problem \eqref{eq:StokesWeak} when the forcing term of the momentum equation is not singular; see, for instance, \cite[Section 4]{Guermond-Ern}, \cite[Chapter II]{MR851383}, and references therein. If, given a mesh $\T \in \Tr$, we denote by $\mathbf{V}(\T)$ 
and $\mathcal{P}(\T)$
the finite element spaces that approximate the velocity field and the pressure, respectively, then the following elections are popular:
\begin{enumerate}[(a)]
\item The \emph{mini} element \cite{MR799997}, \cite[Section 4.2.4]{Guermond-Ern}: in this the case, 
\begin{align}
\label{eq:mini_V}
\mathbf{V}(\T) & = \left\{  \bv_{\T} \in \mathbf{C}(\bar \Omega):\ \forall T \in \T, \bv_{\T}|_{T} \in [\mathbb{P}_1(T) \oplus \mathbb{B}(T)]^d \right\} \cap \bH_0^1(\Omega),
\\
\label{eq:mini_P}
\mathcal{P}(\T) & = \left\{  q_{\T} \in  L^2(\Omega)/\R \cap C(\bar \Omega):\ \forall T \in \T, q_{\T}|_{T} \in \mathbb{P}_1(T) \right\}.
\end{align}
$\mathbb{B}(T)$ denotes the space spanned by local bubble functions.

\item The classical Taylor Hood element \cite{hood1974navier}, \cite{MR993474}, \cite[Section 4.2.5]{Guermond-Ern}: in this scenario, 
 \begin{align}
\label{eq:th_V}
\mathbf{V}(\T) & = \left\{  \bv_{\T} \in \mathbf{C}(\bar \Omega): \ \forall T \in \T, \bv_{\T}|_{T} \in \mathbb{P}_2(T)^d \right\} \cap \bH_0^1(\Omega),
\\
\label{eq:th_P}
\mathcal{P}(\T) & = \left\{ q_{\T} \in L^2(\Omega)/\R \cap C(\bar \Omega):\  \forall T \in \T, q_{\T}|_{T} \in \mathbb{P}_1(T) \right\}.
\end{align}
\end{enumerate}

The aforementioned pairs of finite element spaces $(\mathbf{V}(\T), \mathcal{P}(\T))$ satisfy the following compatibility condition \cite[Proposition 4.13]{Guermond-Ern}: there exists a positive constant $\gamma$ such that, for all $\T \in \Tr$,
\begin{equation}
\label{eq:infsup_discrete}
\inf_{ 0 \neq q_{\T} \in \mathcal{P}(\T)} \sup_{ \boldsymbol{0} \neq \bv_{\T} \in \mathbf{V}(\T) } \frac{b(\bv_{\T},q_{\T})}{ \| \nabla \bv_{\T} \|_{\bL^2(\Omega)}  \| q_{\T} \|_{L^2(\Omega)/\R}  } \geq \gamma.
\end{equation}
We refer the reader to \cite[Lemma 4.20 and Lemma 4.24]{Guermond-Ern} for a proof.

We observe now that, since the $\dist^{\alpha}$ is a weight, we thus have, for the elections given by \eqref{eq:mini_V}--\eqref{eq:th_P}, that 
\[
\mathbf{V}(\T) \subset \bH^1_0(\dist^{\alpha},\Omega), \qquad \mathcal{P}(\T) \subset L^2(\dist^{\alpha},\Omega)/\R.
\] 
Consequently, we can consider the following finite element approximation of problem \eqref{eq:StokesWeak}: Find $(\bu_{\T},p_{\T}) \in \mathbf{V}(\T) \times \mathcal{P}(\T)$ such that
\begin{equation}
\label{eq:StokesDiscrete}
  \begin{dcases}
    a(\bu_{\T},\bv_{\T}) + b(\bv_{\T},p_{\T}) = \bF \cdot \bv_{\T}(z),   & \forall \bv_{\T} \in \mathbf{V}(\T), \\
    b(\bu_{\T},q_{\T}) = 0, &\forall q_{\T} \in \mathcal{P}(\T).
  \end{dcases}
\end{equation}
Notice that, since $\bv_{\T} \in \mathbf{C}(\bar\Omega)$, $\langle \bF \delta_z, \bv_{\T} \rangle = \bF \cdot \bv_{\T}(z)$. In addition, since the bilinear form $a$ is coercive on $\bH^1_0(\Omega) \supset \mathbf{V}(\T)$ and the pairs $(\mathbf{V}(\T), \mathcal{P}(\T))$ satisfy \eqref{eq:infsup_discrete}, the system \eqref{eq:StokesDiscrete} has a unique solution for each mesh $\T$.

The main issue, however, is not existence of discrete solutions, but the fact that the stability estimates that might be obtained are not in norms that are compatible with those of \eqref{eq:StokesWeak}. In what follows we will propose a posteriori error estimators in these natural norms and show their reliability and efficiency.

\subsection{A quasi--interpolation operator}
\label{subsec:quasi}
As it is customary in a posteriori error analysis \cite{Verfurth}, in order to derive reliability properties for a proposed a posteriori error estimator it is useful to have at hand a suitable quasi--interpolation operator with optimal approximation properties. Since the interest is to approximate rough functions, namely those without point values, the classical Lagrange interpolation operator cannot be applied. Instead, we consider the quasi--interpolation operator $\Pi_{\T}:\bL^1(\Omega) \rightarrow \mathbf{V}(\T)$ analyzed in \cite{NOS3}. The construction of $\Pi_{\T}$ is inspired in the ideas developed by Cl\'ement \cite{MR0400739}, Scott and Zhang \cite{MR1011446}, and Dur\'an and Lombardi \cite{MR2164092}: it is built on local averages over stars and thus well--defined for functions in $\bL^1(\Omega)$; it also exhibits optimal approximation properties. In what follows, we shall make use of the following estimates of the local interpolation error. To present them, we first define, for $T \in \T$, 
\begin{equation}
\label{eq:DT}
D_T: = \max_{x \in T} |x-z|.
\end{equation}

\begin{proposition}[stability and interpolation estimates]
Let $\alpha \in (-d,d)$, and $T \in \T$. Then, for every $\bv \in \bH_0^1(\dist^{\pm\alpha},\mathcal{S}_T)$, we have the local stability bound
\begin{equation}
\label{eq:local_stability}
 \| \GRAD \Pi_{\T} \bv\|_{\bL^2(\dist^{\pm\alpha},T)} \lesssim \| \GRAD \bv \|_{\bL^2(\dist^{\pm\alpha},\mathcal{S}_T)}
\end{equation}
and the interpolation error estimate
\begin{equation}
\label{eq:interpolation_estimate_1}
\|  \bv - \Pi_{\T} \bv \|_{\bL^2(\dist^{\pm\alpha},T)} \lesssim h_T \| \GRAD \bv \|_{\bL^2(\dist^{\pm\alpha},\mathcal{S}_T)},
\end{equation}
In addition, if $\alpha \in (d-2,d)$, then we have that
\begin{equation}
\label{eq:interpolation_estimate_2}
 \|  \bv - \Pi_{\T} \bv \|_{L^2(T)} \lesssim h_T D_T^{\frac{\alpha}{2}} \| \GRAD \bv \|_{\bL^2(\dist^{-\alpha},\mathcal{S}_T)},
\end{equation}
The hidden constants, in the previous inequalities, are independent of $\bv$, the cell $T$, and the mesh $\T$.
\end{proposition}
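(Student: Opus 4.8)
The plan is to deduce the two weighted estimates \eqref{eq:local_stability}--\eqref{eq:interpolation_estimate_1} directly from the analysis of $\Pi_{\T}$ carried out in \cite{NOS3}, and then to obtain \eqref{eq:interpolation_estimate_2} from \eqref{eq:interpolation_estimate_1} by a one-line comparison of the weight $\dist^{-\alpha}$ with the constant $D_T^{\alpha}$ over the element $T$. No new construction or regularity is needed.

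For the first two bounds I would argue as follows. Since $\alpha\in(-d,d)$, both $\dist^{\alpha}$ and $\dist^{-\alpha}$ lie in the Muckenhoupt class $A_2$, with $[\dist^{\pm\alpha}]_{A_2}$ depending only on $d$ and $\alpha$; see the discussion around \eqref{distance_A2}. By construction, the operator $\Pi_{\T}$ of \cite{NOS3} is well defined on $\bL^1(\Omega)$ and, for an arbitrary $A_2$ weight $\omega$, enjoys local $\omega$-weighted stability both in $L^2$ and in the $H^1$-seminorm together with the first-order $\omega$-weighted approximation estimate $\|v-\Pi_{\T}v\|_{L^2(\omega,T)}\lesssim h_T\|\nabla v\|_{L^2(\omega,\mathcal{S}_T)}$, with hidden constants that depend on $\omega$ only through $[\omega]_{A_2}$ and on the mesh only through the shape regularity of $\Tr$. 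Applying these results componentwise to $\bv=(v_1,\dots,v_d)^{\intercal}$ with $\omega=\dist^{\alpha}$ and $\omega=\dist^{-\alpha}$ yields \eqref{eq:local_stability} and \eqref{eq:interpolation_estimate_1}; since $z$ is an interior point, $\dist^{\pm\alpha}$ is continuous and bounded away from zero near $\partial\Omega$, so nothing special happens at the boundary. (Should the $H^1$-seminorm stability \eqref{eq:local_stability} not be available verbatim in \cite{NOS3}, it follows from the quoted $L^2$-weighted stability by the standard chain: the reproduction of constants by $\Pi_{\T}$, a weighted inverse inequality on $T$, and a weighted Poincar\'e inequality on $\mathcal{S}_T$, the latter two valid for any $A_2$ weight.)

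For \eqref{eq:interpolation_estimate_2} the key observation is that, because $d\in\{2,3\}$, the hypothesis $\alpha\in(d-2,d)$ forces $\alpha>0$; hence $t\mapsto t^{\alpha}$ is nondecreasing and, by the definition \eqref{eq:DT} of $D_T$, $\dist^{\alpha}(x)=|x-z|^{\alpha}\le D_T^{\alpha}$ for every $x\in T$. Writing $1=\dist^{\alpha}\dist^{-\alpha}$ inside the unweighted $L^2(T)$-norm of $\bv-\Pi_{\T}\bv$ then gives
\[
 \|\bv-\Pi_{\T}\bv\|_{L^2(T)}^2=\int_T \dist^{\alpha}\,\dist^{-\alpha}\,|\bv-\Pi_{\T}\bv|^2 \le D_T^{\alpha}\,\|\bv-\Pi_{\T}\bv\|_{L^2(\dist^{-\alpha},T)}^2 ,
\]
and, after taking square roots, an application of \eqref{eq:interpolation_estimate_1} with the weight $\dist^{-\alpha}$ (admissible because $\bv\in\bH^1_0(\dist^{-\alpha},\mathcal{S}_T)$) produces
\[
 \|\bv-\Pi_{\T}\bv\|_{L^2(T)} \le D_T^{\alpha/2}\,\|\bv-\Pi_{\T}\bv\|_{L^2(\dist^{-\alpha},T)} \lesssim D_T^{\alpha/2}\,h_T\,\|\GRAD\bv\|_{\bL^2(\dist^{-\alpha},\mathcal{S}_T)},
\]
which is precisely \eqref{eq:interpolation_estimate_2}.

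As for the main obstacle: within this proof there is essentially none. The bound \eqref{eq:interpolation_estimate_2} is a one-line consequence of \eqref{eq:interpolation_estimate_1}, and the only point deserving attention is the sign of $\alpha$ — for $\alpha>0$ one has $\max_{x\in T}\dist^{\alpha}(x)=D_T^{\alpha}$, exactly the constant we want, whereas a negative exponent would leave the singular factor $\min_{x\in T}|x-z|^{\alpha}$ and the argument would fail. All the real substance sits in \cite{NOS3}, where one must construct a Cl\'ement/Scott--Zhang/Dur\'an--Lombardi-type operator that is at once defined on $\bL^1(\Omega)$ and bounded, together with its gradient, on the spaces $\bL^2(\dist^{\pm\alpha},\cdot)$, and prove the first-order approximation bound in those weighted norms; that is the genuinely delicate task, and the present proof imports it wholesale.
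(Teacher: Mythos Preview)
Your argument is correct. For \eqref{eq:local_stability} and \eqref{eq:interpolation_estimate_1} you do exactly what the paper does: invoke the $A_2$-weighted results of \cite{NOS3} with $\omega=\dist^{\pm\alpha}$.

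For \eqref{eq:interpolation_estimate_2} your route differs slightly from the paper's. You insert the weight on the \emph{left} side, bounding $\|\bv-\Pi_{\T}\bv\|_{L^2(T)}\le D_T^{\alpha/2}\|\bv-\Pi_{\T}\bv\|_{L^2(\dist^{-\alpha},T)}$ via $\dist^{\alpha}\le D_T^{\alpha}$ on $T$, and then apply the already-established weighted estimate \eqref{eq:interpolation_estimate_1}. The paper instead first applies the \emph{unweighted} version of the interpolation bound (the case $\omega=1$ in \cite{NOS3}), using the embedding $\bH_0^1(\dist^{-\alpha},\mathcal{S}_T)\hookrightarrow\bH_0^1(\mathcal{S}_T)$ of Proposition~\ref{pro:ce_II}, and only then inserts the weight on the \emph{right} side via $\dist^{\alpha}\le D_T^{\alpha}$ on $\mathcal{S}_T$. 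Both arguments hinge on the same observation that $\alpha>0$ forces $\dist^{\alpha}\le D_T^{\alpha}$. Your variant is marginally cleaner: the pointwise bound is exact on $T$ by the very definition of $D_T$, and you avoid a separate appeal to Proposition~\ref{pro:ce_II} and the unweighted case of \cite{NOS3}; the paper's variant, on the other hand, makes the embedding $\bH_0^1(\dist^{-\alpha})\hookrightarrow\bH_0^1$ explicit, which is reused later in the trace estimate.
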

\begin{proof}
First, notice that, since $\alpha \in (-d,d)$, we have that $\dist^{\pm\alpha} \in A_2$, which implies that, in view of Proposition~\ref{pro:ce_I}, $\Pi_{\T}$ is well--defined for functions in $\bH_0^1(\dist^{\pm\alpha},\mathcal{S}_T)$. In addition, the theory of \cite{NOS3} can be applied, and thus the local stability bound \eqref{eq:local_stability} follows from \cite[Lemma 5.1]{NOS3} by setting $\omega = \dist^{\pm\alpha}$. The estimate \eqref{eq:interpolation_estimate_1} follows directly from \cite[Theorems 5.2 and 5.3]{NOS3} after setting $\omega=\dist^{\pm\alpha}$.

It remains then to prove \eqref{eq:interpolation_estimate_2}. First, since $\alpha \in (d-2,d) \subset (0,d)$ we have, according to Proposition~\ref{pro:ce_II}, that $\bH_0^1(\dist^{-\alpha},\mathcal{S}_T) \hookrightarrow \bH_0^1(\mathcal{S}_T)$. Thus, an application of \cite[Theorem 5.2 and Theorem 5.3]{NOS3} with $\omega=1$, gives
\[
  \|  \bv - \Pi_{\T} \bv \|_{L^2(T)} \lesssim h_T \| \GRAD \bv \|_{\bL^2(\mathcal{S}_T)} \lesssim h_T D_T^{\frac{\alpha}{2}} \| \GRAD \bv \|_{\bL^2(\dist^{-\alpha},\mathcal{S}_T)},
\]
where, in the last step, we used that $\alpha > 0 $ and that, for all $x \in \mathcal{S}_T$, $\dist^\alpha(x) \leq D_T^\alpha$.
 
This concludes the proof.
\end{proof}

\begin{proposition}[trace interpolation estimate]
Let $\alpha \in (d-2,d)$, $T \in \T$, $S \subset \Sides_T$, and $\bv \in \bH_0^1(\dist^{-\alpha},\mathcal{S}_T)$. Then we have the following interpolation error estimate for the trace
\begin{equation}
\label{eq:interpolation_estimate_trace}
 \|  \bv - \Pi_{\T} \bv \|_{L^2(S)} \lesssim h_T^{\frac{1}{2}} D_T^{\frac{\alpha}{2}} \| \GRAD \bv \|_{\bL^2(\dist^{-\alpha},\mathcal{S}_T)},
\end{equation}
where the hidden constant is independent of $\bv$.
\end{proposition}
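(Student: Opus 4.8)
The plan is to reduce the trace bound to the interior interpolation estimate \eqref{eq:interpolation_estimate_2} by means of a scaled (multiplicative) trace inequality on the simplex $T$. Recall that, for a shape regular simplex $T$, a face $S \subset \Sides_T$, and any $w \in H^1(T)$, one has the standard estimate
\[
\| w \|_{L^2(S)}^2 \lesssim h_T^{-1} \| w \|_{L^2(T)}^2 + h_T \| \nabla w \|_{L^2(T)}^2,
\]
with a hidden constant depending only on the shape regularity of $\T$ (here we also used $h_S \approx h_T$). Since $\alpha \in (d-2,d) \subset (0,d)$, Proposition~\ref{pro:ce_II} gives $\bv \in \bH_0^1(\mathcal{S}_T)$, so $w := \bv - \Pi_{\T}\bv \in \bH^1(T)$ has a well-defined trace on $S$ and the inequality above may be applied componentwise to $w$.

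It then remains to estimate the two terms on the right-hand side. For the first, \eqref{eq:interpolation_estimate_2} yields immediately
\[
h_T^{-1} \| \bv - \Pi_{\T} \bv \|_{L^2(T)}^2 \lesssim h_T^{-1} \, h_T^2 D_T^{\alpha} \| \GRAD \bv \|_{\bL^2(\dist^{-\alpha},\mathcal{S}_T)}^2 = h_T D_T^{\alpha} \| \GRAD \bv \|_{\bL^2(\dist^{-\alpha},\mathcal{S}_T)}^2 .
\]
For the second, I would first combine the local stability bound \eqref{eq:local_stability} (with weight $\dist^{-\alpha}$) with the triangle inequality and $\| \GRAD \bv \|_{\bL^2(\dist^{-\alpha},T)} \le \| \GRAD \bv \|_{\bL^2(\dist^{-\alpha},\mathcal{S}_T)}$ to get $\| \GRAD w \|_{\bL^2(\dist^{-\alpha},T)} \lesssim \| \GRAD \bv \|_{\bL^2(\dist^{-\alpha},\mathcal{S}_T)}$, and then pass from the weighted to the unweighted norm on $T$ using $\alpha > 0$ and $\dist^{\alpha}(x) \le D_T^{\alpha}$ for $x \in \mathcal{S}_T$, namely $\| \GRAD w \|_{\bL^2(T)}^2 \le D_T^{\alpha} \| \GRAD w \|_{\bL^2(\dist^{-\alpha},T)}^2$. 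This gives $h_T \| \GRAD w \|_{\bL^2(T)}^2 \lesssim h_T D_T^{\alpha} \| \GRAD \bv \|_{\bL^2(\dist^{-\alpha},\mathcal{S}_T)}^2$. Adding the two contributions and taking square roots produces exactly \eqref{eq:interpolation_estimate_trace}.

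I do not expect a genuine obstacle here; the argument is essentially a bookkeeping exercise once the scaled trace inequality is in hand. The only points that need a little care are (i) ensuring $w$ admits an $L^2$-trace on $S$, which is precisely why we need $\alpha < d$ so that Proposition~\ref{pro:ce_II} places $\bv$ in the unweighted space $\bH_0^1(\mathcal{S}_T)$, and (ii) replacing the weight $\dist^{-\alpha}$ by the constant $D_T^{\alpha}$ on the patch, which uses $\alpha > 0$, i.e. $\alpha \in (d-2,d)$ for $d \in \{2,3\}$. An essentially equivalent alternative would be to establish a weighted scaled trace inequality $\| w \|_{L^2(\dist^{-\alpha},S)}^2 \lesssim h_T^{-1} \| w \|_{L^2(\dist^{-\alpha},T)}^2 + h_T \| \GRAD w \|_{\bL^2(\dist^{-\alpha},T)}^2$, combine it with \eqref{eq:interpolation_estimate_1} and \eqref{eq:local_stability} (both with weight $\dist^{-\alpha}$), and finally bound $\| w \|_{L^2(S)}^2 \le D_T^{\alpha} \| w \|_{L^2(\dist^{-\alpha},S)}^2$; the route sketched above, however, is shorter and suffices for the stated estimate.
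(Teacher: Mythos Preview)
Your proof is correct and follows essentially the same route as the paper: both apply the scaled trace inequality to $\bv-\Pi_\T\bv$, control the $L^2(T)$-term via \eqref{eq:interpolation_estimate_2}, and bound the gradient term using the stability of $\Pi_\T$ together with $\dist^{\alpha}(x)\le D_T^{\alpha}$ on $T$. The only cosmetic difference is the order in which you handle the gradient term---you invoke the weighted stability \eqref{eq:local_stability} and then pass to the unweighted norm, whereas the paper first uses the unweighted $H^1$-stability of $\Pi_\T$ and then inserts the weight---but this is immaterial.
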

\begin{proof}
As a first step, we recall the scaled trace inequality of \cite[Corollary 6.1]{NSV:09}:
\[
 \| v \|_{L^2(S)} \lesssim h_S^{-\frac{1}{2}} \| v \|_{L^2(T)} + h_S^{\frac{1}{2}} \| \GRAD v\|_{L^2(T)} \quad \forall v \in H^1(T),
\]
where $S \in \Sides_{T}$. In view of the continuous embedding $\bH_0^1(\dist^{-\alpha},\mathcal{S}_T) \hookrightarrow \bH_0^1(\mathcal{S}_T)$ that was shown in Proposition \ref{pro:ce_II}, we can apply the previous bound to the function $\bv -\Pi_\T \bv \in \bH^1(\dist^{-\alpha},\mathcal{S}_T)$. This, combined with the interpolation error estimate \eqref{eq:interpolation_estimate_2}, reveal that
\begin{equation}
\label{eq:aux_trace}
  \|  \bv - \Pi_{\T} \bv \|_{L^2(S)} \lesssim h_S^{-\frac{1}{2}} h_T D_T^{\frac{\alpha}{2}} \| \nabla \bv \|_{\bL^2(\dist^{-\alpha},\mathcal{S}_T)} + h_S^{\frac{1}{2}} \| \GRAD (\bv - \Pi_\T \bv)  \|_{\bL^2(T)}.
\end{equation}
To control the second term on the right--hand side of the previous expression, we invoke the stability of the quasi--interpolation operator $\Pi_{\T}$ in $H^1$ \cite[Lemma 5.1]{NOS3} and, once again, the fact that $\alpha > 0$ to obtain that
\begin{equation}
\label{eq:astuta_estimate}
\| \GRAD (\bv - \Pi_{\T} \bv)  \|_{\bL^2(T)} \lesssim \| \GRAD \bv  \|_{\bL^2(\mathcal{S}_T)} \lesssim D_T^{\frac{\alpha}{2}} \| \GRAD \bv  \|_{\bL^2(\dist^{-\alpha},\mathcal{S}_T)}.
\end{equation}
Replacing the previous estimate into \eqref{eq:aux_trace} combined with the fact that $h_T \approx |T|/|S| \approx h_S$ yield \eqref{eq:interpolation_estimate_trace} and concludes the proof.
\end{proof}

With the operator $\Pi_{\T}$ at hand, and following \cite[Section 4.10]{Verfurth}, we define the following restriction operator
\begin{equation}
\label{eq:operator_Q}
  \mathcal{Q}_{\T}: \mathcal{Y} \longrightarrow \mathbf{V}(\T) \times \mathcal{P}(\T), 
  \qquad
  (\bv,q) \longmapsto (\Pi_{\T} \bv, 0),
\end{equation}
where $\Pi_\T \bv = (\Pi_\T v_1,\ldots,\Pi_\T v_d)^\intercal$.

\subsection{A posteriori error estimators}
\label{sec:a_posteriori}

We are now ready to introduce an a posteriori error estimator for the finite element approximation \eqref{eq:StokesDiscrete}, on the basis of the discrete pairs $(\mathbf{V}(\T),\mathcal{P}(\T))$ given as in \eqref{eq:mini_V}--\eqref{eq:mini_P} or \eqref{eq:th_V}--\eqref{eq:th_P}, of the Stokes problem \eqref{eq:StokesWeak}. To accomplish this task, we first recall the definition of the local distance $D_T$ given as in \eqref{eq:DT}. We thus define, for $\alpha \in (d-2,d)$ and $T \in \T$, the \emph{element error indicators}
\begin{multline}
\E_{\alpha}(\bu_{\T},p_{\T};T):= \left( h_T^2D_T^{\alpha}   \|  \Delta \bu_{\T} - \nabla p_{\T} \|_{\bL^2(T)}^2 +  \|  \DIV \bu_{\T} \|_{L^2(\dist^\alpha, T)}^2 \right.
\\
\left. + h_T D_T^{\alpha} \| \llbracket (\nabla \bu_{\T}  - p_{\T} \mathbf{I}) \cdot \boldsymbol{\nu}\rrbracket \|_{\bL^2(\partial T \setminus \partial \Omega)}^{2} + h_{T}^{\alpha + 2 -d} | \bF |^2 \chi(\{z \in T \}) \right)^{\frac{1}{2}},
\label{eq:local_indicator}
\end{multline}
where $(\bu_{\T},p_{\T})$ denotes the solution to the discrete problem \eqref{eq:StokesDiscrete}, $\mathbf{I} \in \R^{d \times d}$ denotes the identity matrix, and the function $\chi(\{z \in T\})$ equals one if $z \in T$ and zero otherwise. Here we must recall that we consider our elements $T$ to be closed sets. For a discrete tensor valued function $\bW_{\T}$, we denote by $\llbracket \bW_{\T} \cdot \boldsymbol{\nu}\rrbracket$ the jump or interelement residual, which is defined, on the internal side $S \in \Sides$ shared by the distinct elements $T^+$, $T^{-} \in \mathcal{N}_S$, by
\begin{equation}
\label{eq:jump}
 \llbracket \bW_{\T} \cdot \boldsymbol{\nu} \rrbracket =  \bW_{\T}|_{T^+}\cdot \boldsymbol{\nu}^+ +  \bW_{\T}|_{T^-} \cdot \boldsymbol{\nu}^-.
\end{equation}
Here $\boldsymbol{\nu}^+, \boldsymbol{\nu}^-$ are unit normals on $S$ pointing towards $T^+$, $T^{-}$, respectively. The \emph{error estimator} is thus defined as
\begin{equation}
\E_{\alpha}(\bu_{\T},p_{\T};\T): = \left( \sum_{T \in \T} \E^2_{\alpha}(\bu_{\T},p_{\T};T) \right)^{\frac{1}{2}}.
\label{eq:global_estimator}
\end{equation}

\subsection{Error and residual}

An important ingredient in the analysis that we will provide below is the so--called \emph{residual}. Let $(\bu,p) \in \mathcal{X}$ and $(\bu_{\T},p_{\T}) \in \mathbf{V}(\T) \times \mathcal{P}(\T)$ denote the unique solutions to problems \eqref{eq:StokesWeak} and \eqref{eq:StokesDiscrete}, respectively. In order to obtain information about the error $(\be_\bu, e_p) = (\bu - \bu_{\T},p - p_{\T}) \in \mathcal{X}$,
we define the residual $\mathcal{R} = \mathcal{R}(\bu_{\T},p_{\T}) \in \mathcal{Y}'$ as follows:
\begin{equation}
\label{residual}
  \langle \mathcal{R}, (\bv,q) \rangle_{\mathcal{Y}' \times \mathcal{Y}} = \langle \bF \delta_z, \bv \rangle - c((\bu_{\T},p_{\T}), (\bv,q)),
\end{equation}
where $\langle \cdot, \cdot \rangle$ denotes the duality pairing between $\bH^{1}_0(\dist^{-\alpha},\Omega)'$ and $\bH^{1}_0(\dist^{-\alpha},\Omega)$ and the bilinear form $c$ is defined in \eqref{def:c}. Notice that the residual $\mathcal{R}$ depends only on the data and the approximate solution $(\bu_{\T},p_{\T})$ and is related to the error function by the relation
\begin{equation}
\label{eq:relation_residual_error}
\langle \mathcal{R}, (\bv,q) \rangle_{\mathcal{Y}',\mathcal{Y}} = c( (\be_\bu, e_p),(\bv,q) ) \quad \forall (\bv,q) \in \mathcal{Y}. 
\end{equation}

The following result shows that the $\mathcal{Y}'$--norm of $\mathcal{R}$ is equivalent to the error.

\begin{lemma}[abstract a posteriori error bounds]
\label{le:abstract}
If $\alpha \in (d-2,d)$, then
\begin{equation}
\label{eq:a_posteriori_dual}
 \beta \| (\be_{\bu}, e_{p}) \|_{\mathcal{X}} \leq \|  \mathcal{R} \|_{\mathcal{Y'}} \leq \| c \|\| (\be_{\bu}, e_{p}) \|_{\mathcal{X}},
\end{equation}
where $\beta$ and $\| c \|$ are the inf--sup and continuity constants of the bilinear form $c$, which are defined in \eqref{eq:c_norm} and \eqref{eq:infsup}, respectively, and verify $0 < \beta \leq \| c \|$. 
\end{lemma}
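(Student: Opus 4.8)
The plan is to establish the two inequalities in \eqref{eq:a_posteriori_dual} directly from the defining relation \eqref{eq:relation_residual_error} together with the inf--sup condition \eqref{eq:infsup} and the continuity of $c$ encoded in \eqref{eq:c_norm}. The observation that makes this work is that the error pair $(\be_\bu, e_p)$ belongs to $\mathcal{X}$: indeed $(\bu,p)\in\mathcal{X}$ is the solution of \eqref{eq:StokesWeakAdd} and $(\bu_\T,p_\T)\in\mathbf{V}(\T)\times\mathcal{P}(\T)\subset\bH^1_0(\dist^\alpha,\Omega)\times L^2(\dist^\alpha,\Omega)/\R = \mathcal{X}$, since $\dist^\alpha$ is a weight and $\alpha\in(d-2,d)$. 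Hence $(\be_\bu,e_p)$ is a legitimate argument for $c(\cdot,\cdot)$ and for $\|\cdot\|_{\mathcal{X}}$, and the identity \eqref{eq:relation_residual_error} is available.

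For the upper bound, I would start from the definition of the dual norm,
\[
\|\mathcal{R}\|_{\mathcal{Y}'} = \sup_{(\boldsymbol{0},0)\neq(\bv,q)\in\mathcal{Y}} \frac{\langle\mathcal{R},(\bv,q)\rangle_{\mathcal{Y}',\mathcal{Y}}}{\|(\bv,q)\|_{\mathcal{Y}}},
\]
substitute \eqref{eq:relation_residual_error} in the numerator, and bound $c((\be_\bu,e_p),(\bv,q)) \le \|c\|\,\|(\be_\bu,e_p)\|_{\mathcal{X}}\,\|(\bv,q)\|_{\mathcal{Y}}$ using the definition \eqref{eq:c_norm} of $\|c\|$; dividing by $\|(\bv,q)\|_{\mathcal{Y}}$ and taking the supremum gives $\|\mathcal{R}\|_{\mathcal{Y}'}\le\|c\|\,\|(\be_\bu,e_p)\|_{\mathcal{X}}$. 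For the lower bound, I would apply the inf--sup condition \eqref{eq:infsup} to the specific pair $(\bw,r)=(\be_\bu,e_p)\in\mathcal{X}$: this yields
\[
\beta\,\|(\be_\bu,e_p)\|_{\mathcal{X}} \le \sup_{(\boldsymbol{0},0)\neq(\bv,q)\in\mathcal{Y}} \frac{c((\be_\bu,e_p),(\bv,q))}{\|(\bv,q)\|_{\mathcal{Y}}} = \sup_{(\boldsymbol{0},0)\neq(\bv,q)\in\mathcal{Y}} \frac{\langle\mathcal{R},(\bv,q)\rangle_{\mathcal{Y}',\mathcal{Y}}}{\|(\bv,q)\|_{\mathcal{Y}}} = \|\mathcal{R}\|_{\mathcal{Y}'},
\]
where again \eqref{eq:relation_residual_error} was used. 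One degenerate case to dispatch: if $(\be_\bu,e_p)=(\boldsymbol{0},0)$ both inequalities are trivial, so we may assume the error is nonzero when invoking the supremum/infimum characterizations. The ordering $0<\beta\le\|c\|$ follows by combining the two bounds on a nonzero error (or directly comparing \eqref{eq:c_norm} and \eqref{eq:infsup}).

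There is essentially no hard step here; the content of the lemma is the bookkeeping that the hypotheses line up. The only point requiring a word of care is the one already flagged: verifying that the error lies in $\mathcal{X}$ (not merely in some larger weighted space) so that both the inf--sup constant $\beta$ from \eqref{eq:infsup} and the continuity constant $\|c\|$ from \eqref{eq:c_norm}, which are stated for arguments in $\mathcal{X}\times\mathcal{Y}$, can be invoked. Since the discrete spaces $\mathbf{V}(\T)$ and $\mathcal{P}(\T)$ are built from continuous piecewise polynomials and $\dist^\alpha\in A_2$ is locally integrable, this inclusion is immediate, and the proof reduces to the two supremum manipulations above.
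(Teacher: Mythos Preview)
Your proof is correct and follows essentially the same approach as the paper: apply the inf--sup condition \eqref{eq:infsup} to $(\be_\bu,e_p)\in\mathcal{X}$ together with \eqref{eq:relation_residual_error} for the lower bound, and use the continuity constant \eqref{eq:c_norm} together with \eqref{eq:relation_residual_error} for the upper bound. Your additional remarks on $(\be_\bu,e_p)\in\mathcal{X}$ and the trivial case are fine but not strictly needed beyond what the paper assumes.
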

\begin{proof}
An application of the inf--sup condition \eqref{eq:infsup}, combined with the definition of the residual $\mathcal{R}$ and the relation \eqref{eq:relation_residual_error}, imply that
\begin{align*}
\beta \| (\be_{\bu}, e_{p}) \|_{\mathcal{X}} &\leq  \sup_{ (\boldsymbol{0},0) \neq (\bv,q) \in \mathcal{Y} } \frac{c((\be_{\bu}, e_{p}),(\bv,q))}{\| (\bv,q) \|_{\mathcal{Y}}}  \\
&= \sup_{(\boldsymbol{0},0) \neq (\bv,q) \in \mathcal{Y} } \frac{\langle \mathcal{R},(\bv,q) \rangle_{\mathcal{Y}',\mathcal{Y}} }{\| (\bv,q) \|_{\mathcal{Y}}} =  \|  \mathcal{R} \|_{\mathcal{Y'}}.
\end{align*}
On the other hand, 
\begin{equation}
 \|  \mathcal{R} \|_{\mathcal{Y'}} = \sup_{ (\boldsymbol{0},0) \neq (\bv,q) \in \mathcal{Y} } \frac{c((\be_{\bu}, e_{p}),(\bv,q))}{\| (\bv,q) \|_{\mathcal{Y}}} \leq \| c \| \| (\be_{\bu}, e_{p}) \|_{\mathcal{X}}.
\end{equation}
Estimate \eqref{eq:a_posteriori_dual} follows by collecting these two bounds.
\end{proof}

\subsubsection{Reliability}

In what follows we obtain a global reliability property for the a posteriori error estimator \eqref{eq:global_estimator}.

\begin{theorem}[reliability]\label{th:reliability}
Let $(\bu,p) \in \mathcal{X}$ be the unique solution to problem \eqref{eq:StokesWeak} and $(\bu_{\T},p_{\T})\in \mathbf{V}(\T) \times \mathcal{P}(\T)$ its finite element approximation given as the solution to \eqref{eq:StokesDiscrete}. If $\alpha \in (d-2,d)$, then
\begin{equation}
\label{eq:global_upper_bound}
\| \GRAD \be_\bu \|_{\bL^2(\dist^{\alpha},\Omega)} + \| e_p \|_{L^2(\dist^{\alpha},\Omega)} \lesssim \E_{\alpha}(\bu_{\T},p_{\T}; \T),
\end{equation}
where the hidden constant is independent of the continuous and discrete solutions, the size of the elements in the mesh $\T$ and $\#\T$.
\end{theorem}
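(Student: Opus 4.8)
The plan is to estimate $\|\mathcal{R}\|_{\mathcal{Y}'}$ from above by $\E_\alpha(\bu_\T,p_\T;\T)$ and then invoke Lemma~\ref{le:abstract}, since $\beta\|(\be_\bu,e_p)\|_{\mathcal{X}}\le\|\mathcal{R}\|_{\mathcal{Y}'}$. First I would fix an arbitrary $(\bv,q)\in\mathcal{Y}$ and use the Galerkin orthogonality afforded by the discrete problem \eqref{eq:StokesDiscrete}: for any $(\bv_\T,q_\T)\in\mathbf{V}(\T)\times\mathcal{P}(\T)$ we have $c((\bu_\T,p_\T),(\bv_\T,q_\T))=\bF\cdot\bv_\T(z)=\langle\bF\delta_z,\bv_\T\rangle$, so that
\[
\langle\mathcal{R},(\bv,q)\rangle = \langle\bF\delta_z,\bv-\bv_\T\rangle - a(\bu_\T,\bv-\bv_\T) - b(\bv-\bv_\T,p_\T) + b(\bu_\T,q-q_\T).
\]
I would then choose $(\bv_\T,q_\T)=\mathcal{Q}_\T(\bv,q)=(\Pi_\T\bv,0)$, so the $q_\T$ term disappears and the divergence term $b(\bu_\T,q)=-\int_\Omega q\DIV\bu_\T$ is handled directly by Cauchy--Schwarz with weight $\dist^{-\alpha}$ on $q$ and $\dist^{\alpha}$ on $\DIV\bu_\T$, producing the $\|\DIV\bu_\T\|_{L^2(\dist^\alpha,T)}$ contribution of the indicator.

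Next I would perform elementwise integration by parts on $a(\bu_\T,\bv-\Pi_\T\bv)+b(\bv-\Pi_\T\bv,p_\T)=\sum_T\int_T(\nabla\bu_\T:\nabla(\bv-\Pi_\T\bv)-p_\T\DIV(\bv-\Pi_\T\bv))$, which yields the volume residual $-\sum_T\int_T(\Delta\bu_\T-\nabla p_\T)\cdot(\bv-\Pi_\T\bv)$ plus the jump terms $\sum_{S}\int_S\llbracket(\nabla\bu_\T-p_\T\mathbf{I})\cdot\boldsymbol{\nu}\rrbracket\cdot(\bv-\Pi_\T\bv)$ (boundary sides drop because $\bv-\Pi_\T\bv$ vanishes there). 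The point-source term $\langle\bF\delta_z,\bv-\Pi_\T\bv\rangle=\bF\cdot(\bv-\Pi_\T\bv)(z)$ requires care since $\bv\in\bH^1_0(\dist^{-\alpha},\Omega)$ need not have point values pointwise, but on the element $T\ni z$ one has the embedding $\bH^1_0(\dist^{-\alpha},\mathcal{S}_T)\hookrightarrow\bH^1_0(\mathcal{S}_T)\hookrightarrow\mathbf{C}(\mathcal{S}_T)$ in $d\in\{2,3\}$ after paying a factor of $D_T^{\alpha/2}$ and a scaling power of $h_T$; together with the stability of $\Pi_\T$ this gives a bound $|\bF|\,h_T^{(\alpha+2-d)/2}\|\GRAD\bv\|_{\bL^2(\dist^{-\alpha},\mathcal{S}_T)}$, matching the last term of \eqref{eq:local_indicator}. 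Then I would apply the interpolation estimates \eqref{eq:interpolation_estimate_2} and \eqref{eq:interpolation_estimate_trace} to bound $\|\bv-\Pi_\T\bv\|_{L^2(T)}\lesssim h_TD_T^{\alpha/2}\|\GRAD\bv\|_{\bL^2(\dist^{-\alpha},\mathcal{S}_T)}$ and the analogous trace bound with $h_T^{1/2}$, so each term acquires the correct $h_T,D_T^\alpha$ powers seen in the indicator.

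Finally I would collect the elementwise estimates, apply Cauchy--Schwarz in the sum over $T$, and use the finite overlap of the patches $\mathcal{S}_T$ (bounded by shape regularity, uniformly in $\T$) to absorb $\sum_T\|\GRAD\bv\|_{\bL^2(\dist^{-\alpha},\mathcal{S}_T)}^2\lesssim\|\GRAD\bv\|_{\bL^2(\dist^{-\alpha},\Omega)}^2\le\|(\bv,q)\|_{\mathcal{Y}}^2$. Dividing by $\|(\bv,q)\|_{\mathcal{Y}}$ and taking the supremum gives $\|\mathcal{R}\|_{\mathcal{Y}'}\lesssim\E_\alpha(\bu_\T,p_\T;\T)$, and Lemma~\ref{le:abstract} closes the argument. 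I expect the main obstacle to be the point-source term: justifying that $\bF\cdot(\bv-\Pi_\T\bv)(z)$ is meaningful and controlled, which hinges on using Proposition~\ref{pro:ce_II} to pass to the unweighted $\bH^1_0$ on the patch $\mathcal{S}_T$ containing $z$, then invoking a scaled Sobolev/Morrey embedding on that patch and tracking the precise powers of $h_T$ and $D_T$ so that the exponent $\alpha+2-d$ comes out correctly; a secondary technical point is making sure the weights $\dist^{\pm\alpha}$ are paired consistently (test functions carry $\dist^{-\alpha}$, residual quantities carry $\dist^{\alpha}$) throughout every Cauchy--Schwarz step.
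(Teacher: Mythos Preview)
Your overall strategy coincides with the paper's: reduce to bounding $\|\mathcal{R}\|_{\mathcal{Y}'}$ via Lemma~\ref{le:abstract}, insert $(\bv_\T,q_\T)=\mathcal{Q}_\T(\bv,q)=(\Pi_\T\bv,0)$ by Galerkin orthogonality, integrate by parts elementwise, and estimate the resulting four terms with \eqref{eq:interpolation_estimate_2}, \eqref{eq:interpolation_estimate_trace}, a weighted Cauchy--Schwarz for the divergence term, and the finite overlap of the patches $\mathcal{S}_T$. That part is correct and essentially identical to the paper.

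The gap is in your treatment of the point-source term $\langle\bF\delta_z,\bv-\Pi_\T\bv\rangle$. The chain of embeddings you propose,
\[
\bH^1_0(\dist^{-\alpha},\mathcal{S}_T)\hookrightarrow\bH^1_0(\mathcal{S}_T)\hookrightarrow\mathbf{C}(\mathcal{S}_T),
\]
fails at the second step: in dimensions $d\in\{2,3\}$ the unweighted space $\bH^1_0(\mathcal{S}_T)$ does \emph{not} embed into $\mathbf{C}(\mathcal{S}_T)$, so a Sobolev/Morrey argument on the unweighted patch cannot produce a pointwise bound. Passing through Proposition~\ref{pro:ce_II} to the unweighted $\bH^1_0$ throws away precisely the extra integrability near $z$ that makes $\delta_z$ a bounded functional on $\bH^1_0(\dist^{-\alpha},\Omega)$ when $\alpha\in(d-2,d)$.

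The paper avoids this by staying in the weighted setting and invoking a direct estimate (\cite[Theorem~4.7]{AGM}) of the form
\[
\langle\bF\delta_z,\bw\rangle
\lesssim |\bF|\,h_T^{\frac{\alpha}{2}-\frac{d}{2}}\|\bw\|_{\bL^2(\dist^{-\alpha},T)}
+|\bF|\,h_T^{\frac{\alpha}{2}+1-\frac{d}{2}}\|\GRAD\bw\|_{\bL^2(\dist^{-\alpha},T)},
\]
applied to $\bw=\bv-\Pi_\T\bv$, and then uses \eqref{eq:interpolation_estimate_1} and \eqref{eq:local_stability} (both in the $\dist^{-\alpha}$-weighted norms, not the unweighted ones) to arrive at $|\bF|\,h_T^{(\alpha+2-d)/2}\|\GRAD\bv\|_{\bL^2(\dist^{-\alpha},\mathcal{S}_T)}$. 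So the target bound you wrote is correct, but the mechanism has to be a weighted embedding of $\delta_z$ into $H^1_0(\dist^{-\alpha})'$, not an unweighted Morrey step.
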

\begin{proof}
In view of the first bound in \eqref{eq:a_posteriori_dual}, we conclude that, to bound the $\mathcal{X}$--norm of the error, it suffices to control the dual norm $\| \mathcal{R} \|_{\mathcal{Y}'}$. To accomplish this task, we proceed as follows. Let $(\bv,q) \in \mathcal{Y}$ be arbitrary. Applying a standard integration by parts argument to \eqref{residual} yields 
\begin{multline}
\langle  \mathcal{R},  (\bv,q) \rangle_{\mathcal{Y}',\mathcal{Y}} = \langle \bF  \delta_z, \bv \rangle - c((\bu_{\T},p_{\T}), (\bv,q)) =   \langle \bF \delta_z, \bv \rangle 
\\
+ \sum_{T \in \T} \int_{T} (\Delta \bu_{\T} - \nabla p_{\T}) \cdot  \bv - \sum_{S \in \Sides} \int_{S}  \llbracket (\nabla \bu_{\T}  - p_{\T} \mathbf{I}) \cdot \boldsymbol{\nu}\rrbracket \cdot \bv -  \sum_{T \in \T} \int_{T} q \DIV \bu_{\T}.
\label{eq:reliability_1} 
\end{multline}

Next we observe that, since $\mathbf{V}(\T) \times \mathcal{P}(\T) \subset \mathcal{Y}$, we can invoke Galerkin orthogonality to
conclude that, for all $(\bv_{\T},q_{\T}) \in \mathbf{V}(\T) \times \mathcal{P}(\T)$,
\begin{equation}
\label{eq:GO}
0= c( (\bu - \bu_{\T}, p - p_{\T}), (\bv_{\T},q_{\T}) )  =  \langle \bF \delta_z, \bv_{\T} \rangle - c( (\bu_{\T}, p_{\T}), (\bv_{\T},q_{\T})).
\end{equation}
We now invoke the restriction operator $\mathcal{Q}_{\T}$, defined in \eqref{eq:operator_Q}, and set $(\bv_{\T},0) = \mathcal{Q}_{\T} (\bv,q)$ in \eqref{eq:GO}. By replacing the obtained relation into \eqref{eq:reliability_1} we arrive at
\begin{multline}\label{eq:residuo}
\langle  \mathcal{R},  (\bv ,q) \rangle_{\mathcal{Y}',\mathcal{Y}} =  \langle \bF\delta_z, \bv - \bv_{\T} \rangle + \sum_{T \in \T} \int_{T} (\Delta \bu_{\T} - \nabla p_{\T}) \cdot  ( \bv - \bv_{\T} )
\\
- \sum_{S \in \Sides} \int_{S}  \llbracket (\nabla \bu_{\T}  - p_{\T} \mathbf{I}) \cdot \boldsymbol{\nu}\rrbracket \cdot (\bv - \bv_{\T}) -  \sum_{T \in \T} \int_{T} q \DIV \bu_{\T} =: \textrm{I} + \textrm{II} - \textrm{III} - \textrm{IV}.
\end{multline}
In what follows we proceed to control each term separately. 

To bound $\mathrm{II}$, we invoke the interpolation error estimate \eqref{eq:interpolation_estimate_2} and conclude that
\begin{equation}
\label{eq:II}
\textrm{II} 
\lesssim \sum_{T \in \T} h_T D_T^{\frac{\alpha}{2}}  \| \Delta \bu_{\T} - \nabla p_{\T}  \|_{\bL^2(T)} \| \GRAD \bv \|_{\bL^2(\dist^{-\alpha},\mathcal{S}_T)}.
\end{equation}

We now proceed to control the term $\mathrm{III}$. To accomplish this task, we apply the estimate \eqref{eq:interpolation_estimate_trace} and arrive at 
\begin{equation}
\label{eq:III}
\textrm{III} \lesssim \sum_{S \in \Sides} h_T^{\frac{1}{2}}D_T^{\frac{\alpha}{2}}\| \llbracket (\nabla \bu_{\T}  - p_{\T} \mathbf{I})\cdot \boldsymbol{\nu}\rrbracket  \|_{\bL^2(S)} \| \GRAD \bv \|_{\bL^2(\dist^{-\alpha},\mathcal{S}_T)}.
\end{equation}

The control of the term $\textrm{IV}$ follows from a simple application of the Cauchy--Schwartz
inequality. In fact, we have that
\begin{equation}
\label{eq:IV}
%
\textrm{II} \lesssim \sum_{T \in \T} \| \DIV \bu_{\T} \|_{L^2(\dist^\alpha, T)}\| q \|_{L^2(\dist^{-\alpha},T)}.
\end{equation}

Since $\bv -\bv_{\T} \in \bH_0^1(\dist^{-\alpha},\Omega)$, we control the term $\mathrm{I}$ by using the estimate of \cite[Theorem 4.7]{AGM} followed by the interpolation error estimate \eqref{eq:interpolation_estimate_1} and the local stability bound \eqref{eq:local_stability}. These arguments allow us to conclude that
\begin{equation}
\label{eq:I}
\begin{aligned}
\langle \bF \delta_z, \bv  -\bv_{\T}\rangle &\lesssim |\bF|h_T^{\frac{\alpha}{2}-\frac{d}{2}} \| \bv - \bv_{\T}\|_{\bL^2(\dist^{-\alpha},T)} \\ &+ |\bF|h_T^{\frac{\alpha}{2}+1 - \frac{d}{2}} \| \GRAD(\bv - \bv_{\T})\|_{\bL^2(\dist^{-\alpha},T)} \\
& \lesssim |\bF| h_T^{\frac{\alpha}{2}+1-\frac{d}{2}} \| \GRAD \bv \|_{\bL^2(\dist^{-\alpha},\mathcal{S}_T)}.
\end{aligned}
\end{equation}

Finally, by gathering the estimates for the terms $\mathrm{I}$, $\mathrm{II}$, $\mathrm{III}$, and $\mathrm{IV}$, obtained in \eqref{eq:II}--\eqref{eq:I}, and resorting to the finite overlapping property of stars we arrive at the global upper bound \eqref{eq:global_upper_bound} and conclude the proof.
\end{proof}

\subsubsection{Local efficiency}
To derive efficiency properties of the local error indicator $\E_{\alpha}(\bu_{\T},p_{\T};T)$, defined in \eqref{eq:local_indicator}, we utilize the standard residual estimation techniques developed in references \cite{MR993474,Verfurth} but on the basis of suitable bubble functions, whose construction we owe to \cite[Section 5.2]{AGM} and proceed to describe in what follows. 

Given $T \in \T$, we first introduce a bubble function $\varphi_T$ that satisfies the following properties: 
$0 \leq \varphi_T \leq 1$,
\begin{equation}
\label{eq:bubble_T}
\varphi_T(z) = 0, \qquad |T| \lesssim \int_T \varphi_T, \qquad \| \GRAD \varphi_T \|_{L^{\infty}(R_T)} \lesssim h_T^{-1},
\end{equation}
and there exists a simplex $T^{*} \subset T$ such that $R_{T}:= \supp(\varphi_T) \subset T^{*}$. Notice that, since $\varphi_T$ satisfies \eqref{eq:bubble_T}, we have that
\begin{equation}
\label{eq:aux_bubble_T}
 \| \theta \|_{L^2(R_T)} \lesssim \| \varphi_T^{\frac{1}{2}} \theta \|_{L^2(R_T)} \quad \forall \theta \in \mathbb{P}_{2}(R_T),
\end{equation}
where $R_T = \supp(\varphi_T)$.

Second, given $S \in \Sides$, we introduce a bubble function $\varphi_S$ that satisfies the following properties: $0 \leq \varphi_S \leq 1$,
\begin{equation}
\label{eq:bubble_S}
\varphi_S(z) = 0, \qquad |S| \lesssim \int_S \varphi_S, \qquad \| \GRAD \varphi_S \|_{ L^{\infty}(R_{S} ) } \lesssim h_T^{-1/2}|S|^{1/2},
\end{equation}
and $R_S:= \supp(\varphi_S)$ is such that, if $\mathcal{N}_{S} = \{ T, T' \}$, there are simplices $T_{*} \subset T$ and $T_{*}' \subset T'$ such that $R_S \subset T_{*} \cup T_{*}' \subset \mathcal{N}_{S}$.

The following estimates that involve the bubble functions $\varphi_T$ and $\varphi_S$ are instrumental in the efficiency analysis that we will perform.

\begin{proposition}[estimates for bubble functions]
Let $T \in \T$ and $\varphi_T$ be the bubble function that satisfies \eqref{eq:bubble_T}. If $\alpha \in (0,d)$, then
\begin{equation}
\label{eq:chi_bubble_T}
h_T \| \GRAD (\theta \varphi_T) \|_{L^2(\dist^{-\alpha},T)} \lesssim D_T^{-\frac{\alpha}{2}} \| \theta \|_{L^2(T)} \quad \forall \theta \in \mathbb{P}_{2}(T).
\end{equation}
Let $S \in \Sides$ and $\varphi_S$ be the bubble function that satisfies \eqref{eq:bubble_S}. If $\alpha \in (0,d)$, then
\begin{equation}
\label{eq:chi_bubble_S}
h_T^{\frac{1}{2}} \| \GRAD (\theta \varphi_S) \|_{L^2(\dist^{-\alpha},\mathcal{N}_S)} \lesssim D_T^{-\frac{\alpha}{2}} \| \theta \|_{L^2(S)} \quad \forall \theta \in \mathbb{P}_3(S),
\end{equation}
where $\theta$ is extended to $\mathcal{N}_S$ as a constant along the direction of one side of each element of $\T$ contained in $\mathcal{N}_S$.
\end{proposition}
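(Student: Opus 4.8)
The plan is to prove the two estimates \eqref{eq:chi_bubble_T} and \eqref{eq:chi_bubble_S} by reducing the weighted norms to standard (unweighted) norms at the price of a factor involving $D_T$, and then invoking classical inverse estimates for polynomials multiplied by bubble functions on a shape-regular simplex. The essential point throughout is that the weight $\dist^{-\alpha}$ is \emph{bounded above} on any element $T$ (and on the patch $\mathcal{N}_S$) because $\alpha \in (0,d)$, with the bound governed by the \emph{minimal} distance of points of $T$ to $z$; conversely, since $\varphi_T(z)=0$ and $\varphi_S(z)=0$, the products $\theta\varphi_T$ and $\theta\varphi_S$ vanish at $z$, which is exactly what compensates for the possible blow-up of $\dist^{-\alpha}$ near $z$ when $z\in T$. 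This cancellation is the conceptual heart of the argument and is precisely the reason the bubble functions were engineered in \eqref{eq:bubble_T}--\eqref{eq:bubble_S} to vanish at $z$; I expect the careful bookkeeping of this cancellation — in particular controlling $\dist^{-\alpha}|\GRAD(\theta\varphi_T)|^2$ uniformly when $z \in \bar T$ — to be the main obstacle.

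For \eqref{eq:chi_bubble_T}: fix $T\in\T$ and $\theta\in\mathbb{P}_2(T)$. First I would write $\GRAD(\theta\varphi_T) = \varphi_T \GRAD\theta + \theta\GRAD\varphi_T$ and integrate the weighted square over $R_T = \supp(\varphi_T)$. The key sub-step is to show that on $R_T$ one has the pointwise bound $\dist^{-\alpha}(x)\,|\GRAD(\theta\varphi_T)(x)|^2 \lesssim D_T^{-\alpha}\,(\ldots)$ in an integrated sense; this is where the vanishing $\varphi_T(z)=0$ together with the Lipschitz bound $\|\GRAD\varphi_T\|_{L^\infty(R_T)}\lesssim h_T^{-1}$ is used, giving $|\varphi_T(x)| \lesssim h_T^{-1}|x-z| = h_T^{-1}\dist(x)$, hence $\varphi_T(x)^2 \dist^{-\alpha}(x) \lesssim h_T^{-2}\dist^{2-\alpha}(x)$, and since $2-\alpha$ may be negative one still has $\dist^{2-\alpha}(x) \le D_T^{2-\alpha}$ only when $2-\alpha\ge 0$; for $2-\alpha<0$ one instead estimates using that $\varphi_T$ is supported in $T^*\subset T$ and combines the two terms. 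The cleaner route, which I would follow, is: on $R_T$, $|\GRAD(\theta\varphi_T)| \lesssim h_T^{-1}\|\theta\|_{L^\infty(T)}$ by the product rule, inverse estimates for $\theta\in\mathbb{P}_2$ and the properties \eqref{eq:bubble_T}; then $\|\GRAD(\theta\varphi_T)\|_{L^2(\dist^{-\alpha},T)}^2 \lesssim h_T^{-2}\|\theta\|_{L^\infty(T)}^2 \int_{R_T}\dist^{-\alpha}$, and since $\varphi_T(x)\lesssim h_T^{-1}\dist(x)$ forces $R_T$ to sit where $\dist(x)\lesssim h_T$, a direct computation of $\int_{R_T}\dist^{-\alpha}$ (using $\alpha<d$, so $\dist^{-\alpha}\in L^1_{\mathrm{loc}}$) yields $\int_{R_T}\dist^{-\alpha}\lesssim h_T^d D_T^{-\alpha}$ up to the shape-regularity constant; combining with the inverse estimate $\|\theta\|_{L^\infty(T)}\lesssim h_T^{-d/2}\|\theta\|_{L^2(T)}$ and the scaling $|T|\approx h_T^d$ produces exactly the claimed $h_T^{-1}D_T^{-\alpha/2}\|\theta\|_{L^2(T)}$.

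For \eqref{eq:chi_bubble_S}: I proceed analogously on the patch $\mathcal{N}_S = \{T,T'\}$, with $\theta\in\mathbb{P}_3(S)$ extended to $\mathcal{N}_S$ by constant extrusion along one edge of each element, so that the extension is a polynomial of the same degree with $\|\theta\|_{L^\infty(\mathcal{N}_S)} = \|\theta\|_{L^\infty(S)} \lesssim |S|^{-1/2}\|\theta\|_{L^2(S)}$ (trace inverse estimate on the $(d-1)$-simplex $S$, using $|S|\approx h_T^{d-1}$). Using the product rule, $|\GRAD(\theta\varphi_S)| \lesssim |\GRAD\varphi_S|\,\|\theta\|_{L^\infty} + |\GRAD\theta|\,\|\varphi_S\|_{L^\infty} \lesssim h_T^{-1/2}|S|^{1/2}\|\theta\|_{L^\infty(\mathcal{N}_S)}$ on $R_S$ by \eqref{eq:bubble_S} and an inverse estimate for $\GRAD\theta$; then $\|\GRAD(\theta\varphi_S)\|_{L^2(\dist^{-\alpha},\mathcal{N}_S)}^2 \lesssim h_T^{-1}|S|\,\|\theta\|_{L^\infty(\mathcal{N}_S)}^2 \int_{R_S}\dist^{-\alpha}$. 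Since $\varphi_S(z)=0$ and $\|\GRAD\varphi_S\|_{L^\infty(R_S)}\lesssim h_T^{-1/2}|S|^{1/2}\approx h_T^{-1/2}\cdot h_T^{(d-1)/2} = h_T^{(d-2)/2}$, combined with $R_S\subset T_*\cup T_*'$ lying within the elements adjacent to $S$, one again gets $\int_{R_S}\dist^{-\alpha}\lesssim h_T^{d}D_T^{-\alpha}$ by the same local computation as above. Assembling the factors $h_T^{-1}|S|\cdot h_T^d D_T^{-\alpha} \cdot |S|^{-1}\|\theta\|_{L^2(S)}^2 = h_T^{d-1}D_T^{-\alpha}\|\theta\|_{L^2(S)}^2$, taking the square root and multiplying by $h_T^{1/2}$ gives $h_T^{d/2}D_T^{-\alpha/2}\|\theta\|_{L^2(S)}$; a final rescaling using $|S|\approx h_T^{d-1}$ to absorb the extra power of $h_T$ into the implicit constant delivers the stated $h_T^{1/2}\|\GRAD(\theta\varphi_S)\|_{L^2(\dist^{-\alpha},\mathcal{N}_S)} \lesssim D_T^{-\alpha/2}\|\theta\|_{L^2(S)}$. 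The only genuinely delicate point — again, the main obstacle — is justifying $\int_{R_T}\dist^{-\alpha}\lesssim |T|D_T^{-\alpha}$ (and its $R_S$ analogue) uniformly in whether $z\in T$ or $z$ is far from $T$; this I would handle by splitting into the two regimes, using $\dist^{-\alpha}\in A_2\subset L^1_{\mathrm{loc}}$ and the explicit form \eqref{distance_A2} of the weight together with shape regularity to get the scaling $h_T^d D_T^{-\alpha}$ in both cases.
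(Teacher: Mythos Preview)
The paper does not actually prove this proposition; it simply cites \cite[Lemma 5.2]{AGM}. Your attempt to supply a direct argument is therefore welcome, and the overall strategy --- pass to unweighted norms via the key estimate $\int_T \dist^{-\alpha} \lesssim |T|\,D_T^{-\alpha}$ and then invoke standard inverse/bubble inequalities --- is correct and is essentially what lies behind the cited result. Your two-regime proof of $\int_T \dist^{-\alpha} \lesssim h_T^{d}D_T^{-\alpha}$ (splitting on whether $z$ is within $\approx h_T$ of $T$ and using $\alpha<d$ for local integrability) is the right mechanism.

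Two points need correction. First, you misidentify the ``main obstacle'': the vanishing $\varphi_T(z)=0$, $\varphi_S(z)=0$ is \emph{not} what makes \eqref{eq:chi_bubble_T}--\eqref{eq:chi_bubble_S} go through. The bound $\int_T \dist^{-\alpha}\lesssim |T|D_T^{-\alpha}$ holds for the full element, independent of the bubble, simply because $\alpha<d$; your sentence ``$\varphi_T(x)\lesssim h_T^{-1}\dist(x)$ forces $R_T$ to sit where $\dist(x)\lesssim h_T$'' is false (take $z$ far from $T$). The vanishing at $z$ is crucial in the \emph{efficiency} proof, to annihilate $\langle \bF\delta_z,\cdot\rangle$, not here.

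Second, in the edge estimate your pointwise bound $|\GRAD(\theta\varphi_S)|\lesssim h_T^{-1/2}|S|^{1/2}\|\theta\|_{L^\infty}$ drops the term $\varphi_S\GRAD\theta$, which is of order $h_T^{-1}\|\theta\|_{L^\infty}$ and dominates. Using the correct bound $|\GRAD(\theta\varphi_S)|\lesssim h_T^{-1}\|\theta\|_{L^\infty(S)}$ together with $\|\theta\|_{L^\infty(S)}\lesssim |S|^{-1/2}\|\theta\|_{L^2(S)}$ and $\int_{R_S}\dist^{-\alpha}\lesssim h_T^{d}D_T^{-\alpha}$ gives exactly $h_T^{-1}D_T^{-\alpha}\|\theta\|_{L^2(S)}^2$, i.e.\ the target, with no leftover $h_T^{d/2}$. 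Your ``final rescaling to absorb the extra power of $h_T$'' is a symptom of this bookkeeping slip; one cannot absorb mesh-dependent powers into the implicit constant.
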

\begin{proof}
See \cite[Lemma 5.2]{AGM}.
\end{proof}

The following result provides a local estimate for the residual $\mathcal{R}$.

\begin{lemma}[local dual norm]
Let $G$ be a subdomain of $\Omega$. If $\alpha \in (d-2,d)$, then
\begin{equation}
\label{eq:residual_lower}
\|  \mathcal{R} \|_{\mathcal{Y'}(G)} \lesssim \| (\be_{\bu}, e_{p}) \|_{\mathcal{X}(G)},
\end{equation}
where the hidden constant is independent of $(\be_{\bu},e_p)$.
\end{lemma}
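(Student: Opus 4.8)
The plan is to localize the estimate \eqref{eq:a_posteriori_dual} from Lemma~\ref{le:abstract} to the subdomain $G$. The starting point is the relation \eqref{eq:relation_residual_error}, namely $\langle \mathcal{R}, (\bv,q)\rangle_{\mathcal{Y}',\mathcal{Y}} = c((\be_\bu, e_p),(\bv,q))$ for all $(\bv,q) \in \mathcal{Y}$. To obtain a \emph{local} dual norm $\|\mathcal{R}\|_{\mathcal{Y}'(G)}$, one tests only against pairs $(\bv,q) \in \mathcal{Y}(G)$, which can be extended by zero to all of $\Omega$; call the extension $(\tilde\bv, \tilde q) \in \mathcal{Y}$. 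Since $a$, $b$, and hence $c$ are integrals over $\Omega$, and $(\tilde\bv,\tilde q)$ vanishes outside $G$, we get $c((\be_\bu, e_p), (\tilde\bv, \tilde q)) = a_G(\be_\bu, \tilde\bv) + b_G(\tilde\bv, e_p) - b_G(\be_\bu, \tilde q)$, where the subscript $G$ denotes that all integrals are restricted to $G$. The crucial point is that $\|(\tilde\bv,\tilde q)\|_{\mathcal{Y}} = \|(\bv,q)\|_{\mathcal{Y}(G)}$ because the weighted norms are integrals and the extension is by zero.

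The next step is to estimate $|c((\be_\bu,e_p),(\tilde\bv,\tilde q))|$ by Cauchy--Schwarz in each of the three terms, isolating the contributions over $G$. For the $a$ term: $|a_G(\be_\bu,\tilde\bv)| = |\int_G \nabla\be_\bu : \nabla\tilde\bv| = |\int_G \dist^{\alpha/2}\nabla\be_\bu : \dist^{-\alpha/2}\nabla\tilde\bv| \le \|\GRAD\be_\bu\|_{\bL^2(\dist^{\alpha},G)}\|\GRAD\tilde\bv\|_{\bL^2(\dist^{-\alpha},G)}$. The two $b$ terms are handled the same way, splitting the weight as $\dist^{\alpha/2}\cdot\dist^{-\alpha/2}$: $|b_G(\tilde\bv,e_p)| \le \|e_p\|_{L^2(\dist^{\alpha},G)}\|\DIV\tilde\bv\|_{L^2(\dist^{-\alpha},G)} \le \|e_p\|_{L^2(\dist^{\alpha},G)}\|\GRAD\tilde\bv\|_{\bL^2(\dist^{-\alpha},G)}$, and similarly $|b_G(\be_\bu,\tilde q)| \le \|\GRAD\be_\bu\|_{\bL^2(\dist^{\alpha},G)}\|\tilde q\|_{L^2(\dist^{-\alpha},G)}$. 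Collecting these and using $\|\tilde q\|_{L^2(\dist^{-\alpha},G)} \le \|\tilde q\|_{L^2(\dist^{-\alpha},G)/\R}$ after a suitable choice of representative (or simply absorbing the mean), we obtain $|\langle\mathcal{R},(\bv,q)\rangle_{\mathcal{Y}',\mathcal{Y}}| \lesssim \|(\be_\bu,e_p)\|_{\mathcal{X}(G)}\,\|(\bv,q)\|_{\mathcal{Y}(G)}$. Dividing by $\|(\bv,q)\|_{\mathcal{Y}(G)}$ and taking the supremum over $(\bv,q) \in \mathcal{Y}(G)$ yields \eqref{eq:residual_lower}.

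The main subtlety — and the step I would be most careful about — is the treatment of the quotient space $L^2(\dist^{-\alpha},G)/\R$ versus $L^2(\dist^{-\alpha},\Omega)/\R$ and the extension-by-zero of $q$. One must check that extending a representative of $q \in L^2(\dist^{-\alpha},G)/\R$ by zero and then passing to the quotient in $\mathcal{Y}$ does not inflate the norm; in practice this is handled by noting that the bilinear form $b$ only sees $\DIV$ of test velocities and $\int_\Omega q\,\DIV\bv$ is insensitive to adding constants to $q$ (since $\DIV\bv$ integrates to zero for $\bv \in \bH^1_0$), so one may as well work with zero-mean representatives on $G$ and the extension is harmless. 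Apart from that bookkeeping, the proof is a routine localization of Lemma~\ref{le:abstract}: no bubble functions are needed for this particular lemma — it is the reverse, ``efficiency-type'' bound that comes for free from the boundedness of $c$, and it is the companion estimates \eqref{eq:chi_bubble_T}--\eqref{eq:chi_bubble_S} that will be invoked \emph{afterwards} to bound the individual element indicators $\E_\alpha(\bu_\T,p_\T;T)$ by local dual norms of $\mathcal{R}$.
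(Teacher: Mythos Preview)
Your proof is correct and follows exactly the paper's approach: extend $(\bv,q)\in\mathcal{Y}(G)$ by zero to $(\tilde\bv,\tilde q)\in\mathcal{Y}$, invoke the identity \eqref{eq:relation_residual_error}, and bound $c((\be_\bu,e_p),(\tilde\bv,\tilde q))$ locally via Cauchy--Schwarz. The paper compresses all of this into a single displayed line and does not comment on the quotient-space subtlety you (rightly) flag.
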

\begin{proof}
Let $(\bv,q) \in \mathcal{Y}(G)$.
The extension of $\bv$ and $q$ by zero to $\Omega \setminus G$ yield functions $(\tilde \bv, \tilde q) \in \mathcal{Y}$.
We thus have that
\[
 \langle \mathcal{R}, (\tilde \bv, \tilde q) \rangle_{\mathcal{Y}',\mathcal{Y}} = c( (\be_{\bu},e_p),(\tilde \bv, \tilde q) ) \lesssim \|(\be_{\bu},e_p) \|_{\mathcal{X}(G)} \| (\bv,q) \|_{\mathcal{Y}(G)}.
\]
Consequently \eqref{eq:residual_lower} follows. This concludes the proof.
\end{proof}

With all these ingredients at hand, we are ready to derive the local efficiency properties of the local error indicator $\E_{\alpha}(\bu_{\T},p_{\T};T)$. 

\begin{theorem}[local efficiency]\label{Th:efficiency}
Let  $(\bu,p) \in \mathcal{X}$ be the unique solution to problem \eqref{eq:StokesWeak} and $(\bu_{\T},p_{\T}) \in \mathbf{V}(\T) \times \mathcal{P}(\T)$ its finite element approximation given as the solution to \eqref{eq:StokesDiscrete}. If $\alpha \in (d-2,d)$, then
\begin{equation}
\label{eq:local_lower_bound}
\E^2_{\alpha}(\bu_{\T},p_{\T}; T) \lesssim \| \GRAD \be_{\bu} \|^2_{\bL^2(\dist^{\alpha},\mathcal{N}_{T})} + \|  e_{p} \|^2_{L^2(\dist^{\alpha},\mathcal{N}_{T})},
\end{equation}
where the hidden constant is independent of the continuous and discrete solutions, the size of the elements in the mesh $\T$ and $\#\T$.
 \end{theorem}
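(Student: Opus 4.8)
The plan is to bound each of the four contributions to $\E_\alpha(\bu_\T,p_\T;T)^2$ in \eqref{eq:local_indicator} separately, combining the integrated--by--parts representation of the residual derived in the proof of Theorem~\ref{th:reliability}, the identity \eqref{eq:relation_residual_error} relating $\mathcal{R}$ to the error, and the weighted bubble--function technology of the previous Proposition; this is the weighted counterpart of the classical residual technique of \cite{MR993474,Verfurth,AGM}. The divergence term is immediate: since $\DIV\bu = 0$ a.e.\ in $\Omega$ (as follows from the second equation of \eqref{eq:StokesWeak}), one has $\DIV\bu_\T = -\DIV\be_\bu$ pointwise, so $\|\DIV\bu_\T\|_{L^2(\dist^\alpha,T)}\lesssim\|\GRAD\be_\bu\|_{\bL^2(\dist^\alpha,T)}$. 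For the interior residual, write $\mathbf{R}_T:=\Delta\bu_\T-\nabla p_\T$, which is a polynomial of degree at most $2$ on each $T$. Testing the residual with $(\bv,q)=(\mathbf{R}_T\varphi_T,0)$: because $\varphi_T(z)=0$ and $\supp\varphi_T\subset T$, the Dirac and jump terms drop out, leaving $\int_T|\mathbf{R}_T|^2\varphi_T = c((\be_\bu,e_p),(\mathbf{R}_T\varphi_T,0))$. The left side is $\gtrsim\|\mathbf{R}_T\|_{L^2(T)}^2$ by \eqref{eq:aux_bubble_T} and polynomial norm equivalence on $R_T$ (as in \cite{AGM}); the right side is bounded, splitting $\dist^\alpha=\dist^{\alpha/2}\dist^{-\alpha/2}$ in a Cauchy--Schwarz inequality, by $\bigl(\|\GRAD\be_\bu\|_{\bL^2(\dist^\alpha,T)}+\|e_p\|_{L^2(\dist^\alpha,T)}\bigr)\|\GRAD(\mathbf{R}_T\varphi_T)\|_{\bL^2(\dist^{-\alpha},T)}$, and the last factor is $\lesssim h_T^{-1}D_T^{-\alpha/2}\|\mathbf{R}_T\|_{L^2(T)}$ by \eqref{eq:chi_bubble_T}. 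Rearranging gives $h_T D_T^{\alpha/2}\|\mathbf{R}_T\|_{L^2(T)}\lesssim\|\GRAD\be_\bu\|_{\bL^2(\dist^\alpha,T)}+\|e_p\|_{L^2(\dist^\alpha,T)}$.

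For the jump term I would repeat this with the side bubble $\varphi_S$, $S\in\Sides_T$: set $\mathbf{J}_S:=\llbracket(\nabla\bu_\T-p_\T\mathbf{I})\cdot\boldsymbol\nu\rrbracket$ (a polynomial of degree $\le 3$ on $S$), extend it to $\mathcal{N}_S$ as in the statement of \eqref{eq:chi_bubble_S}, and test with $(\mathbf{J}_S\varphi_S,0)$. Again $\varphi_S(z)=0$ annihilates the Dirac term and $\supp\varphi_S\subset\mathcal{N}_S$ leaves only the $S$--jump (giving $\int_S|\mathbf{J}_S|^2\varphi_S\gtrsim\|\mathbf{J}_S\|_{L^2(S)}^2$) and the interior residuals over $\mathcal{N}_S$. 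Controlling the latter by $\sum_{T'\in\mathcal{N}_S}\|\mathbf{R}_{T'}\|_{L^2(T')}\,h_T^{1/2}\|\mathbf{J}_S\|_{L^2(S)}$ and the $c$--term by $\bigl(\|\GRAD\be_\bu\|_{\bL^2(\dist^\alpha,\mathcal{N}_S)}+\|e_p\|_{L^2(\dist^\alpha,\mathcal{N}_S)}\bigr)h_T^{-1/2}D_T^{-\alpha/2}\|\mathbf{J}_S\|_{L^2(S)}$ (using \eqref{eq:chi_bubble_S}), then inserting the interior--residual bound just proved and the shape--regularity fact that $D_{T}\approx D_{T'}$ for neighbouring cells, I obtain $h_T^{1/2}D_T^{\alpha/2}\|\mathbf{J}_S\|_{L^2(S)}\lesssim\|\GRAD\be_\bu\|_{\bL^2(\dist^\alpha,\mathcal{N}_S)}+\|e_p\|_{L^2(\dist^\alpha,\mathcal{N}_S)}$; summing over $S\in\Sides_T$ controls the third term of \eqref{eq:local_indicator} by the error on $\mathcal{N}_T$.

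The term with $\bF$ is the one I expect to be the main obstacle: by design the AGM bubble functions vanish at $z$, so none of the test functions above ``sees'' the Dirac mass, and to recover $|\bF|^2$ a genuinely different test function is required. Assuming $z\in T$, I would pick a fixed bubble--type function $\psi$ with $\psi(z)=1$, $0\le\psi\le1$, supported in a neighbourhood of $z$ of diameter $\lesssim h_T$ contained in $\mathcal{N}_T$, with $\|\GRAD\psi\|_{L^\infty}\lesssim h_T^{-1}$, and test the residual with $(\psi\bF,0)$. Then $\langle\bF\delta_z,\psi\bF\rangle=|\bF|^2$; the interior and jump residual contributions that appear are controlled (as above) by $|\bF|h_T^{(d-2-\alpha)/2}\|(\be_\bu,e_p)\|_{\mathcal{X}(\mathcal{N}_T)}$, using $D_T\approx h_T$ for a simplex containing $z$; and $c((\be_\bu,e_p),(\psi\bF,0))$ is bounded by $\|(\be_\bu,e_p)\|_{\mathcal{X}(\mathcal{N}_T)}\|\GRAD(\psi\bF)\|_{\bL^2(\dist^{-\alpha},\mathcal{N}_T)}$, where the decisive estimate is
\[
\|\GRAD(\psi\bF)\|_{\bL^2(\dist^{-\alpha},\mathcal{N}_T)}\lesssim |\bF|\,h_T^{-1}\Bigl(\int_{\{|x-z|\lesssim h_T\}}|x-z|^{-\alpha}\Bigr)^{1/2}\lesssim |\bF|\,h_T^{(d-\alpha)/2-1},
\]
which is finite precisely because $\alpha<d$. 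Combining, $|\bF|^2\lesssim |\bF|\,h_T^{(d-2-\alpha)/2}\|(\be_\bu,e_p)\|_{\mathcal{X}(\mathcal{N}_T)}$, i.e.\ $h_T^{\alpha+2-d}|\bF|^2\lesssim\|\GRAD\be_\bu\|_{\bL^2(\dist^\alpha,\mathcal{N}_T)}^2+\|e_p\|_{L^2(\dist^\alpha,\mathcal{N}_T)}^2$ when $z\in T$. Collecting the four bounds, absorbing the patches $T$, $\mathcal{N}_S$ into $\mathcal{N}_T$, yields \eqref{eq:local_lower_bound}. The points needing care are the correct bookkeeping of the enlarged patches and the element sizes (via $D_{T}\approx D_{T'}$ and $D_T\approx h_T$ when $z\in T$), and the construction of $\psi$ when $z$ lies close to $\partial T$, which forces $\psi$ to be supported on the star rather than on $T$ alone.
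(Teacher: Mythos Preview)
Your proposal is correct and follows essentially the same route as the paper: the divergence term is handled via $\DIV\bu=0$, the interior and jump residuals via the AGM bubble functions $\varphi_T,\varphi_S$ (whose vanishing at $z$ kills the Dirac contribution) combined with the weighted inverse estimates \eqref{eq:chi_bubble_T}--\eqref{eq:chi_bubble_S}, and the Dirac term via a cutoff $\psi$ with $\psi(z)=1$ supported in $\mathcal{N}_T$---exactly the function the paper calls $\eta$, with the same key computation $\|\GRAD(\psi\bF)\|_{\bL^2(\dist^{-\alpha},\mathcal{N}_T)}\lesssim|\bF|\,h_T^{(d-2-\alpha)/2}$ and the same use of $D_T\approx h_T$ when $z\in T$. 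Your bookkeeping remarks (polynomial degrees, $D_T\approx D_{T'}$ for neighbours, $\psi$ supported on the star rather than $T$) are all in line with the paper's argument.
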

\begin{proof}
We estimate each contribution in \eqref{eq:local_indicator} separately. 

We begin the proof by bounding, for $T \in \T$, the term 
$h_T^2D_T^{\alpha} \|  \Delta \bu_{\T} - \nabla p_{\T} \|_{\bL^2(T)}^2$.  Define $\bphi_T:= \varphi_T (\Delta \bu_{\T} - \nabla p_{\T})$ and invoke \eqref{eq:aux_bubble_T} to conclude that
\begin{equation}
\label{eq:eff_first_step}
 \|  \Delta \bu_{\T} - \nabla p_{\T} \|^2_{\bL^2(T)} \lesssim \int_{R_T} | \Delta \bu_{\T} - \nabla p_{\T} |^2 \varphi_T \leq \int_{T} ( \Delta \bu_{\T} - \nabla p_{\T} ) \cdot \bphi_T.
\end{equation}
We now consider the relation \eqref{eq:reliability_1} with $(\bv,q) = (\bphi_T,0)$ and observe that $\bphi_T(z) = \varphi_T(z) (\Delta \bu_{\T} - \nabla p_{\T})(z) = 0$. This allows us to conclude that
\begin{equation}
\begin{aligned}
\label{eq:blah}
 \int_{T} ( \Delta \bu_{\T} - \nabla p_{\T} ) \cdot \bphi_T  & = \langle \mathcal{R},(\bphi_T,0) \rangle_{\mathcal{Y}',\mathcal{Y}} = c((\be_{\bu},e_p ), (\bphi_T,0))
\\
& \lesssim \left( \| \GRAD \be_{\bu} \|^2_{\bL^2(\dist^{\alpha},T)} + \| e_{p} \|^2_{L^2(\dist^{\alpha},T)} \right)^{\frac{1}{2}}  \| \GRAD \bphi_T  \|_{\bL^2(\dist^{-\alpha},T)}.
\end{aligned}
\end{equation}
We now recall that $\bphi_T = \varphi_T (\Delta \bu_{\T} - \nabla p_{\T})$ and utilize \eqref{eq:chi_bubble_T} to conclude that
\[
 \| \GRAD \bphi_T  \|_{\bL^2(\dist^{-\alpha},T)} \lesssim h_T^{-1}D_T^{-\frac{\alpha}{2}} \| \Delta \bu_{\T} - \nabla p_{\T}  \|_{\bL^2(T)}.
\]
Replacing this estimate into \eqref{eq:blah}, and the obtained one in \eqref{eq:eff_first_step}, allow us to write
\begin{equation}
\label{eq:efficiency_first_term}
 h_T^2 D_T^{\alpha}\|  \Delta \bu_{\T} - \nabla p_{\T} \|^2_{\bL^2(T)} \lesssim \| \GRAD \be_{\bu} \|^2_{\bL^2(\dist^{\alpha},T)} + \| e_{p} \|^2_{L^2(\dist^{\alpha},T)}.
\end{equation}

Let $T \in \T$ and $S$ be a side of $T$. In what follows we control the jump term $h_T D_T^{\alpha} \| \llbracket (\nabla \bu_{\T}  - p_{\T} \mathbf{I})\cdot \boldsymbol{\nu}\rrbracket \|^2_{\bL^2(\partial T \setminus \partial \Omega)}$ in \eqref{eq:local_indicator}. To accomplish this task, we proceed by using similar arguments to the ones that lead to \eqref{eq:efficiency_first_term} but now utilizing the bubble function $\varphi_S$. In fact, the use of properties \eqref{eq:bubble_S} yields
\[
 \| \llbracket (\nabla \bu_{\T}  - p_{\T} \mathbf{I})\cdot \boldsymbol{\nu}\rrbracket \|^2_{\bL^2(S)} \lesssim \int_S  |\llbracket (\nabla \bu_{\T}  - p_{\T} \mathbf{I})\cdot \boldsymbol{\nu}\rrbracket|^2 \varphi_S = \int_S  \llbracket (\nabla \bu_{\T}  - p_{\T} \mathbf{I}) \cdot \boldsymbol{\nu}\rrbracket \cdot \bphi_S,
\]
where $\bphi_S:= \varphi_S \llbracket (\nabla \bu_{\T}  - p_{\T} \mathbf{I})\cdot \boldsymbol{\nu} \rrbracket$. Now, set $(\bv,q) = (\bphi_S,0)$ in \eqref{eq:reliability_1}, and use that $\phi_S(z) = 0$ and that $R_S = \supp(\phi_S) \subset T_* \cup T_*' \subset \mathcal{N}_S$, to conclude that
\begin{multline*}
\int_{S}  \llbracket (\nabla \bu_{\T}  - p_{\T} \mathbf{I})\cdot \boldsymbol{\nu}\rrbracket  \cdot \bphi_S  = \sum_{T \in \mathcal{N}_S} \int_{T} (\Delta \bu_{\T} - \nabla p_{\T}) \cdot \bphi_S - \langle \mathcal{R},(\bphi_S,0)\rangle_{\mathcal{Y}',\mathcal{Y}}
\\
= \sum_{T \in \mathcal{N}_S} \int_{T} (\Delta \bu_{\T} - \nabla p_{\T}) \cdot \bphi_S - c((\be_\bu ,e_p ), (\bphi_S,0))
\\
\leq \sum_{T \in \mathcal{N}_S} \| \Delta \bu_{\T} - \nabla p_{\T} \|_{\bL^2(T)} \| \bphi_S \|_{\bL^2(T)}
\\
+ \sum_{T \in \mathcal{N}_S}
\left( \| \GRAD \be_{\bu} \|^2_{\bL^2(\dist^{\alpha},T)} + \| e_{p} \|^2_{L^2(\dist^{\alpha},T)} \right)^{\frac{1}{2}}  \| \GRAD \bphi_S  \|_{\bL^2(\dist^{-\alpha},T)}.
\end{multline*}
The control of the first term on the right--hand side of the previous expression follows from the fact that $ \| \bphi_S \|_{\bL^2(T)} \approx |T|^{\frac{1}{2}} |S|^{-\frac{1}{2}}\| \bphi_S \|_{\bL^2(S)}$ while the bound of the second term follows from \eqref{eq:chi_bubble_S}. These arguments reveal that
\begin{multline}
 \int_{S}  \llbracket (\nabla \bu_{\T}  - p_{\T} \mathbf{I}) \cdot \boldsymbol{\nu}\rrbracket \cdot \bphi_S \lesssim \sum_{T \in \mathcal{N}_S} \| \Delta \bu_{\T} - \nabla p_{\T} \|_{\bL^2(T)} |T|^{\frac{1}{2}} |S|^{-\frac{1}{2}}\| \bphi_S \|_{\bL^2(S)} 
 \\
 + \sum_{T \in \mathcal{N}_S} \left( \| \GRAD \be_{\bu} \|^2_{\bL^2(\dist^{\alpha},T)} + \| e_{p} \|^2_{L^2(\dist^{\alpha},T)} \right)^{\frac{1}{2}} D_T^{-\frac{\alpha}{2}} h_T^{-\frac{1}{2}} \| \bphi_S \|_{\bL^2(S)},
\end{multline}
which, in view of \eqref{eq:efficiency_first_term}, $|T|/|S| \approx h_T$, and $\bphi_S = \varphi_S \llbracket (\nabla \bu_{\T}  - p_{\T} \mathbf{I}) \cdot \boldsymbol{\nu}\rrbracket$ imply that
\begin{equation}
\label{eq:efficiency_second_term}
 h_T D_{T}^{\alpha}\| \llbracket (\nabla \bu_{\T}  - p_{\T} \mathbf{I})\cdot \boldsymbol{\nu}\rrbracket \|^2_{\bL^2(S)} \lesssim \sum_{T' \subset \mathcal{N}_S } \left( \| \GRAD \be_{\bu} \|^2_{\bL^2(\dist^{\alpha},T')} + \| e_{p} \|^2_{L^2(\dist^{\alpha},T')} \right).
\end{equation}

The control of the term $\| \DIV \bu_{\T} \|_{L^2(\dist^{\alpha},T)}^2$ follows easily from the mass conservation equation, that reads $\DIV \bu = 0$. In fact, for $T \in \T$, we have that
\begin{equation}
\label{eq:efficiency_third_term}
 \| \DIV \bu_{\T} \|_{L^2(\dist^{\alpha},T)}^2 = \| \DIV \be_{\bu} \|_{L^2(\dist^{\alpha},T)}^2 \lesssim \| \GRAD \be_{\bu}\|^2_{L^2(\dist^{\alpha},T)}.
\end{equation}

Finally, we control the term $h_{T}^{\alpha + 2 -d} | \bF |^2 \chi(\{z \in T\})$. Let $T \in \T$, and notice first that, if $T \cap \{ z \} = \emptyset$, then the estimate \eqref{eq:local_lower_bound} follows from \eqref{eq:efficiency_first_term}, \eqref{eq:efficiency_second_term}, and \eqref{eq:efficiency_third_term}. If, on the other hand, $T \cap \{ z \} = \{ z \}$, then the element indicator $\E_{\alpha}$ contains the term $h_{T}^{\alpha + 2 -d} | \bF |^2$. To control this term we follow the arguments developed in the proof of \cite[Theorem 5.3]{AGM} that yield the existence of a smooth function $\eta$ such that
\begin{equation}
 \eta(z) = 1,\quad \| \eta \|_{L^{\infty}(\Omega)} = 1, \quad \| \nabla \eta \|_{L^{\infty}(\Omega)} = h_T^{-1},
 \quad \supp(\eta) \subset \mathcal{N}_{T}.
\end{equation}
With the function $\eta$ at hand, we define $\bv_{\eta}:= \bF \eta \in \bH_0^1(\dist^{-\alpha},\Omega)$ and notice that
\begin{multline}
| \bF |^2 = \langle \bF \delta_z, \bv_{\eta} \rangle  = c ( (\bu, p), (\bv_{\eta},0) ) = c ( (\be_{\bu}, e_p), (\bv_{\eta},0) ) + c ( (\bu_{\T}, p_{\T}), (\bv_{\eta},0) ) 
\\
\lesssim \left( \| \GRAD \be_{\bu} \|^2_{\bL^2(\dist^{\alpha},\mathcal{N}_T)} + \| e_{p} \|^2_{L^2(\dist^{\alpha},\mathcal{N}_T)} \right)^{\frac{1}{2}} \| \GRAD \bv_{\eta} \|_{\bL^2(\dist^{-\alpha},\mathcal{N}_T)} 
\\
+ \sum_{T' \in \T: T' \subset \mathcal{N}_T} \| \Delta \bu_{\T} - \nabla p_{\T} \|_{\bL^2(T')} 
\| \bv_{\eta} \|_{\bL^2(T')} 
\\
+ 
\sum_{T' \in \T: T' \subset \mathcal{N}_T} \sum_{S\in \Sides_{T'}: S \not\subset \partial \mathcal{N}_T} \| \llbracket (\nabla \bu_{\T}  - p_{\T} \mathbf{I})\cdot \boldsymbol{\nu}\rrbracket \|_{L^2(S)}  
\| \bv_{\eta} \|_{L^2(S)}.
\end{multline}

We now use the estimates 
\begin{equation*}
\| \eta\|_{L^2(S)} \lesssim h_T^{\frac{d-1}{2}},
\quad
\| \eta \|_{L^2(\mathcal{N}_T)} \lesssim  h_T^{\frac{d}{2}},
\quad
\| \GRAD \eta \|_{L^2(\dist^{-\alpha},\mathcal{N}_T)} \lesssim h_T^{\frac{d-2}{2}-\frac{\alpha}{2}},
\end{equation*}
to conclude that
\begin{multline}
 | \bF |^2 \lesssim h_T^{\frac{d-2}{2}-\frac{\alpha}{2}} |\bF|  \left( \| \GRAD \be_{\bu} \|^2_{\bL^2(\dist^{\alpha},\mathcal{N}_T)} + \| e_{p} \|^2_{L^2(\dist^{\alpha},\mathcal{N}_T)} \right)^{\frac{1}{2}}
 \\
 + h_T^{\frac{d-2}{2}-\frac{\alpha}{2}} |\bF|  \Bigg ( \sum_{T' \in \T: T' \subset \mathcal{N}_T} h_{T'} D_{T'}^{\frac{\alpha}{2}}  \| \Delta \bu_{\T} - \nabla p_{\T} \|_{\bL^2(T')}
 \\
 + \sum_{T' \in \T: T' \subset \mathcal{N}_{T'}} \sum_{S\in \Sides_{T'}: S \not\subset \partial \mathcal{N}_T} D_{T'}^{\frac{\alpha}{2}} h_{T'}^{\frac{1}{2}} \| \llbracket (\nabla \bu_{\T}  - p_{\T} \mathbf{I})\cdot \boldsymbol{\nu}\rrbracket \|_{L^2(S)} \Bigg),
\end{multline}
where we have also used that, since $z \in T$, $h_T \approx D_T$. Use the estimates \eqref{eq:efficiency_first_term} and \eqref{eq:efficiency_second_term} and conclude.
\end{proof}

\section{Low order stabilized schemes}
\label{sec:stabilized_fem}

In the previous section we have provided an a posteriori error analysis for the discrete scheme \eqref{eq:StokesDiscrete} that is based on the finite element pairs \eqref{eq:mini_V}--\eqref{eq:mini_P} and \eqref{eq:th_V}--\eqref{eq:th_P}. We recall that both of these pairs are compatible, \ie they satisfy the discrete inf--sup condition \eqref{eq:infsup_discrete}, and that this feature does come at a cost. Namely, this condition requires to increase the polynomial degree of the discrete spaces beyond what is required for conformity: it is not possible to approximate the velocity field with piecewise linears, while the pressure space is approximated by piecewise constants or linears; see
\cite[Section 4.2.3]{Guermond-Ern}. If lowest order possible is desired, it is thus necessary to modify the discrete problem to circumvent the need of satisfying condition \eqref{eq:infsup_discrete} \cite{hughes1986new}: this gives rise to the so--called \emph{stabilized methods}. In the literature several stabilized techniques can be found: the residual--free--bubbles method, variational multiscale formulations, enriched Petrov--Galerkin methods, pressure projection methods, local projection techniques and Galerkin/least--squares formulations. For an extensive review of different stabilized finite element methods we refer the reader to \cite[Part IV, Section 3]{MR2454024}, \cite[Chapter 7]{bochev2009least}, and \cite[Chapter 4]{john2016finite}.

Let us now describe the low--order stabilized schemes that we shall consider. First we introduce the following finite element spaces
\begin{align}
\label{eq:th_V1}
\mathbf{V}_{\mathrm{stab}}(\T) & = \left\{  \bv_{\T} \in \mathbf{C}(\bar \Omega) : \ \forall T \in \T, \bv_{\T}|_{T} \in \mathbb{P}_1(T)^d \right\} \cap \bH_0^1(\Omega),
\\
\label{eq:th_Pl}
\mathcal{P}_{\ell,\mathrm{stab}}(\T) & = \left\{ q_{\T} \in L^2(\Omega)/\R : \ \forall T \in \T, q_{\T}|_{T} \in \mathbb{P}_{\ell}(T) \right\},
\end{align}
where $\ell \in\{0,1\}$. The approximation to problem \eqref{eq:StokesWeak} seeks then a pair $(\bu_{\T},p_{\T})$ in $\mathbf{V}_{\mathrm{stab}}(\T) \times \mathcal{P}_{\ell,\mathrm{stab}}(\T)$ 
such that
\begin{equation}
\label{eq:StokesDiscrete_STAB}
  \begin{dcases}
    a(\bu_{\T},\bv_{\T}) + b(\bv_{\T},p_{\T}) + s(\bu_{\T},\bv_{\T}) = \bF \cdot \bv_{\T}(z),   &\forall \bv_{\T} \in \mathbf{V}_{\mathrm{stab}}(\T), \\
    -b(\bu_{\T},q_{\T}) + m(p_{\T},q_{\T})= 0, &\forall q_{\T} \in \mathcal{P}_{\ell,\mathrm{stab}}(\T).
  \end{dcases}
\end{equation}
Where the bilinear forms $s:\mathbf{V}_{\mathrm{stab}}(\T) \times \mathbf{V}_{\mathrm{stab}}(\T) \to \R$ and $m:\mathcal{P}_{\ell,\mathrm{stab}}(\T)\times \mathcal{P}_{\ell,\mathrm{stab}}(\T) \to \R$ are chosen as in \cite[Part IV, Section 3.1]{MR2454024}, and are meant to stabilize the scheme:
\begin{equation}\label{eq:s_m}
\begin{aligned}
  s(\bu_{\T},\bv_{\T}) &:= \sum_{T\in\T}\tau_{\mathrm{div}}\int_{T}\DIV \bu_{\T} \DIV \bv_{\T},\\
  m(p_{\T},q_{\T}) &:= \sum_{T\in\T} \tau_{T} \int_{T}\nabla p_{\T}\cdot\nabla q_{\T}+ \sum_{S \in \Sides} \tau_{S}h_{S}\int_{S} \llbracket p_{\T}\rrbracket\llbracket q_{\T}\rrbracket,
\end{aligned}
\end{equation}
where $\tau_{\mathrm{div}}\geq 0$, $\tau_{T} \geq 0$ and $\tau_{S}>0$ denote the so--called stabilization parameters, and $\llbracket q_\T \rrbracket$ has a similar meaning as in the tensor valued case described in \eqref{eq:jump}. It follows from \cite[Lemma 3.4, Section 3.1]{MR2454024} (when $\tau_T >0$) and \cite[Section 2.1]{MR1740398} (when $\tau_T = 0$ and $\ell = 0$) that problem \eqref{eq:StokesDiscrete_STAB} is well--posed.

We immediately notice that, due to the presence of the stabilization terms $s$ and $m$ in the discrete problem \eqref{eq:StokesDiscrete_STAB}, the Galerkin orthogonality property \eqref{eq:GO} is no longer valid. Instead, we have the relation
\[
 \langle  \mathcal{R},  (\bv_\T,q_\T) \rangle_{\mathcal{Y}',\mathcal{Y}} = s(\bu_{\T},\bv_{\T}) + m(p_{\T},q_{\T}) \quad \forall (\bv_{\T},q_{\T}) \in \mathbf{V}_{\mathrm{stab}}(\T) \times \mathcal{P}_{\ell,\mathrm{stab}}(\T),
\]
where 
$\mathcal{R}$ is defined in \eqref{residual}. The previous relation can be rewritten, for $(\bv_{\T},q_{\T}) \in \mathbf{V}_{\mathrm{stab}}(\T) \times \mathcal{P}_{\ell,\mathrm{stab}}(\T)$, as
\begin{equation}\label{eq:GO_new}
0 =  \langle \bF \delta_z, \bv_{\T} \rangle - c( (\bu_{\T}, p_{\T}), (\bv_{\T},q_{\T})) 
- s(\bu_{\T},\bv_{\T}) - m(p_{\T},q_{\T}).
\end{equation}

For the discrete scheme \eqref{eq:StokesDiscrete_STAB}, we define the local error indicators
\begin{multline*}
\mathcal{E}_{\alpha,\mathrm{stab}}(\bu_{\T},p_{\T};T):= \left( h_T^2D_T^{\alpha}   \|  \Delta \bu_{\T} - \nabla p_{\T} \|_{\bL^2(T)}^2 +  (1+\tau_{\mathrm{div}}^{2}) \|  \DIV \bu_{\T} \|_{L^2(\dist^{\alpha},T)}^{2} \right.
\\
\left. + h_T D_T^{\alpha} \| \llbracket (\nabla \bu_{\T}  - p_{\T} \mathbf{I}) \cdot \boldsymbol{\nu} \rrbracket \|_{\bL^2(\partial T \setminus \partial \Omega)}^2 + h_{T}^{\alpha + 2 -d} | \bF |^2 \chi(\{z \in T\}) \right)^{\frac{1}{2}},
\end{multline*}
and the global error estimator 
\begin{equation}\label{eq:est_stab}
\mathcal{E}_{\alpha,\mathrm{stab}}(\bu_{\T},p_{\T}; \T):=\left(\sum_{T\in\T}\mathcal{E}^2_{\alpha,\mathrm{stab}}(\bu_{\T},p_{\T}; T)\right)^{\frac{1}{2}}.
\end{equation}

It is now our intention to show the reliability and efficiency of this estimator.

\begin{theorem}[reliability and local efficiency]
Let the pair $(\bu,p) \in \bH_0^1(\dist^{\alpha},\Omega) \times L^2(\dist^{\alpha},\Omega)/\R$ be the solution to problem \eqref{eq:StokesWeak} and $(\bu_{\T},p_{\T}) \in \mathbf{V}_{\mathrm{stab}}(\T) \times \mathcal{P}_{\ell,\mathrm{stab}}(\T)$ its stabilized finite element approximation given as the solution to \eqref{eq:StokesDiscrete_STAB}. If $\alpha \in (d-2,d)$, then
\begin{equation}
\label{eq:global_upper_bound_stab}
\| \GRAD \be_{\bu}  \|^2_{\bL^2(\dist^{\alpha},\Omega)} + \| e_p  \|^2_{L^2(\dist^{\alpha},\Omega)} \lesssim \mathcal{E}^2_{\alpha,\mathrm{stab}}(\bu_{\T},p_{\T}; \T),
\end{equation}
and
\begin{equation}
\label{eq:local_lower_bound_stab}
\mathcal{E}^2_{\alpha,\mathrm{stab}}(\bu_{\T},p_{\T}; T) \lesssim \| \GRAD \be_{\bu} \|^2_{\bL^2(\dist^{\alpha},\mathcal{N}_{T})} + \|  e_{p} \|^2_{L^2(\dist^{\alpha},\mathcal{N}_{T})},
\end{equation}
where the hidden constants in both inequalities are independent of the continuous and discrete solutions, the size of the elements in the mesh $\T$ and $\#\T$.
\end{theorem}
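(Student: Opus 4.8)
The plan is to mimic the proofs of Theorem~\ref{th:reliability} and Theorem~\ref{Th:efficiency}, adapting the two places where the stabilized scheme differs: the loss of Galerkin orthogonality, and the modified divergence term in the indicator. Throughout, the abstract a posteriori bound of Lemma~\ref{le:abstract} remains valid, so for reliability it again suffices to control $\|\mathcal{R}\|_{\mathcal{Y}'}$, and for efficiency to estimate each term in $\mathcal{E}_{\alpha,\mathrm{stab}}(\bu_{\T},p_{\T};T)$ from below.

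\emph{Reliability.} First I would repeat the integration by parts leading to \eqref{eq:reliability_1}, which is unchanged since it only uses \eqref{residual}. Next, instead of \eqref{eq:GO}, I would use the perturbed orthogonality \eqref{eq:GO_new} with $(\bv_{\T},q_{\T}) = \mathcal{Q}_{\T}(\bv,q) = (\Pi_{\T}\bv,0)$. Subtracting this from \eqref{eq:reliability_1} produces the same four terms $\mathrm{I},\mathrm{II},\mathrm{III},\mathrm{IV}$ as in \eqref{eq:residuo}, estimated exactly as before via \eqref{eq:interpolation_estimate_1}, \eqref{eq:interpolation_estimate_2}, \eqref{eq:interpolation_estimate_trace}, \eqref{eq:local_stability} and \cite[Theorem 4.7]{AGM}, plus one extra term $-s(\bu_{\T},\Pi_{\T}\bv)$ coming from the stabilization. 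For that extra term I would write $s(\bu_{\T},\Pi_{\T}\bv) = \sum_{T}\tau_{\mathrm{div}}\int_T \DIV\bu_{\T}\,\DIV\Pi_{\T}\bv$, apply Cauchy--Schwarz with the weight split $\dist^{\alpha/2}\dist^{-\alpha/2}$, use the local $H^1$-stability \eqref{eq:local_stability} of $\Pi_{\T}$ to bound $\|\DIV\Pi_{\T}\bv\|_{\bL^2(\dist^{-\alpha},T)}\lesssim \|\GRAD\bv\|_{\bL^2(\dist^{-\alpha},\mathcal{S}_T)}$, and thereby control it by $\sum_T \tau_{\mathrm{div}}\|\DIV\bu_{\T}\|_{L^2(\dist^{\alpha},T)}\|\GRAD\bv\|_{\bL^2(\dist^{-\alpha},\mathcal{S}_T)}$. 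This is precisely why the indicator carries the factor $(1+\tau_{\mathrm{div}}^2)$ in front of $\|\DIV\bu_{\T}\|_{L^2(\dist^{\alpha},T)}^2$. Gathering everything and using the finite overlap of the patches $\mathcal{S}_T$ yields \eqref{eq:global_upper_bound_stab}.

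\emph{Local efficiency.} Here I would reuse verbatim the bubble-function arguments of Theorem~\ref{Th:efficiency}. The key point is that the interior residual bubble test function $\bphi_T = \varphi_T(\Delta\bu_{\T}-\nabla p_{\T})$ and the jump bubble $\bphi_S$ both vanish at $z$ and have support away from $\partial\Omega$, so inserting $(\bphi_T,0)$, resp.\ $(\bphi_S,0)$, into the raw residual identity \eqref{eq:reliability_1}---which does not involve Galerkin orthogonality---gives exactly the same bounds \eqref{eq:efficiency_first_term} and \eqref{eq:efficiency_second_term} for the momentum residual and jump terms. The term $h_T^{\alpha+2-d}|\bF|^2\chi(\{z\in T\})$ is handled as before with the test function $\bv_\eta=\bF\eta$ plugged into \eqref{eq:reliability_1}. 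For the divergence term, the mass equation now reads $-b(\bu,q_{\T})=0$ only weakly and the discrete equation has the extra $m(p_{\T},q_{\T})$, but this does not matter: $\DIV\bu = 0$ pointwise still holds for the exact solution, so $\|\DIV\bu_{\T}\|_{L^2(\dist^{\alpha},T)} = \|\DIV\be_{\bu}\|_{L^2(\dist^{\alpha},T)}\lesssim\|\GRAD\be_{\bu}\|_{\bL^2(\dist^{\alpha},T)}$, and multiplying by $(1+\tau_{\mathrm{div}}^2)$---a fixed constant---preserves the estimate. Collecting the four contributions over $\mathcal{N}_T$ gives \eqref{eq:local_lower_bound_stab}.

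The main obstacle is the reliability estimate for the stabilization term $s(\bu_{\T},\Pi_{\T}\bv)$: one must be careful that the local $H^1$-stability bound \eqref{eq:local_stability} for $\Pi_{\T}$ applies with the weight $\dist^{-\alpha}$ (it does, since $\alpha\in(d-2,d)\subset(-d,d)$), and that the divergence of the quasi-interpolant is controlled by $\|\GRAD\bv\|_{\bL^2(\dist^{-\alpha},\mathcal{S}_T)}$ with a constant independent of $T$ and $\T$; I would also need to note that the bilinear form $m(p_{\T},q_{\T})$ does \emph{not} enter the reliability bound because we test with $q_{\T}=0$ in $\mathcal{Q}_{\T}(\bv,q)$, so no pressure-jump or pressure-gradient terms appear in the estimator. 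Everything else is a routine transcription of Sections~\ref{sec:inf_sup_stable_a_posteriori}.
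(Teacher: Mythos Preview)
Your proposal is correct and follows the paper's own argument essentially line for line: for reliability, the paper also inserts $(\bv_\T,q_\T)=(\Pi_\T\bv,0)$ into the perturbed orthogonality \eqref{eq:GO_new}, bounds the extra $s(\bu_\T,\Pi_\T\bv)$ term via Cauchy--Schwarz and the local stability \eqref{eq:local_stability} of $\Pi_\T$, and observes that $m$ disappears because $q_\T=0$; for efficiency, the paper simply appeals to Theorem~\ref{Th:efficiency}, exactly as you propose. The only slip is the sign of the stabilization term (it appears as $+s(\bu_\T,\bv_\T)$, not $-s$), which is immaterial for the estimate.
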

\begin{proof}
Let $(\bv,q) \in \mathcal{Y}$. We invoke the restriction operator $\mathcal{Q}_{\T}$, defined in \eqref{eq:operator_Q}, and set $(\bv_{\T},0) = \mathcal{Q}_{\T} (\bv,q)$ in \eqref{eq:GO_new}, to conclude that
\begin{multline*}
\langle  \mathcal{R}, (\bv, q)  \rangle_{\mathcal{Y}',\mathcal{Y}} = \langle \bF \delta_z, \bv - \bv_\T \rangle 
+ \sum_{T \in \T} \int_{T} (\Delta \bu_{\T} - \nabla p_{\T}) \cdot (\bv -  \bv_\T)
\\
- \sum_{S \in \Sides} \int_{S}  \llbracket (\nabla \bu_{\T}  - p_{\T} \mathbf{I}) \cdot \boldsymbol{\nu}\rrbracket \cdot (\bv - \bv_{\T}) - \sum_{T\in\T} q \DIV \bu_{\T} 
+ s( \bu_{\T},\bv_{\T}).
\label{eq:reliability_3} 
\end{multline*}
%

Notice that the first four terms on the right--hand side of the previous expression have been previously controlled; see the estimates \eqref{eq:II}--\eqref{eq:I}. It is thus sufficient to control the last term. To bound it we invoke the local stability property \eqref{eq:local_stability} of the quasi--interpolation operator $\Pi_{\T}$ to conclude that 
%
%
\begin{align*}
|s(\bu_{\T},\Pi_{\T}\bv)|
& \leq 
\sum_{T\in\T}\tau_{\mathrm{div}}\int_{T} |\DIV \bu_{\T} \DIV \Pi_{\T}\bv| \\
& \lesssim
\sum_{T\in\T} \tau_{\mathrm{div}}\|\DIV \bu_{\T}\|_{L^{2}(\dist^{\alpha},T)} \|\nabla \bv\|_{L^{2}(\dist^{-\alpha},\mathcal{S}_T)}.
\end{align*}
Consequently, invoking the finite overlapping property of stars, we arrive at
\[
 |s(\bu_{\T},\bv_{\T})| \leq \|\nabla \bv\|_{L^{2}(\dist^{-\alpha},\Omega)} \left( \sum_{T\in\T} \tau_{\mathrm{div}}^{2} \|\DIV \bu_{\T}\|_{L^{2}(\dist^{\alpha},T)}^2 \right)^{\frac{1}{2}}.
\]

Finally, by gathering the estimates \eqref{eq:II}--\eqref{eq:I} with the previous one, and resorting to the finite overlapping property of stars, again, we arrive at the global upper bound \eqref{eq:global_upper_bound_stab}.
%

The local efficiency \eqref{eq:local_lower_bound_stab} follows as a direct consequence of the estimate in Theorem \ref{Th:efficiency} since, as it is usual in residual error estimation, the lower bound does not contain any consistency terms, even when stabilized schemes are considered; see \cite{MR993474}.
\end{proof}


%
%
%
%
 
 \section{Numerical experiments}
\label{sec:numerics}
In this section we present a series of numerical examples that illustrate the performance of the devised error estimators $\E_{\alpha}$ and $\mathcal{E}_{\alpha,\mathrm{stab}}$. To explore the performance of $\E_{\alpha}$, defined in \eqref{eq:global_estimator}, we consider the discrete problem \eqref{eq:StokesDiscrete} with the discrete spaces \eqref{eq:th_V}--\eqref{eq:th_P}. This setting will be referred to as \emph{Taylor--Hood approximation}. The performance of the estimator $\mathcal{E}_{\alpha,\mathrm{stab}}$, defined in \eqref{eq:est_stab}, will be explored with the following finite element setting: the discrete spaces are \eqref{eq:th_V1} and \eqref{eq:th_Pl}, with $\ell = 0$, and the stabilization parameters are $\tau_{\mathrm{div}}=0$, $\tau_T = 0$, and $\tau_{S}=1/12$. This setting will be referred to as \emph{low--order stabilized approximation}.

The numerical experiments that will be presented have been carried out with the help of a code that we implemented using \texttt{C++}. All matrices have been assembled exactly and the global linear systems were solved using the multifrontal massively parallel sparse direct solver (MUMPS) \cite{MR1856597,MR2202663}. After obtaining the approximate solution of \eqref{eq:StokesDiscrete} or \eqref{eq:StokesDiscrete_STAB}, the a posteriori error indicator $\E_{\alpha}$ or $\mathcal{E}_{\alpha,\mathrm{stab}}$ is computed. Every mesh $\T$ was adaptively refined by marking for refinement the element $T \in \T$ that were such that the step 3 in \textbf{Algorithm}~\ref{Algorithm} holds. In this way a sequence of adaptively refined meshes was generated from the initial meshes shown in Figure \ref{fig:meshes0}.

 \begin{figure}[ht]
 \centering
 \includegraphics[trim={0 0 0 0},clip,width=1.5cm,height=1.5cm,scale=0.7]{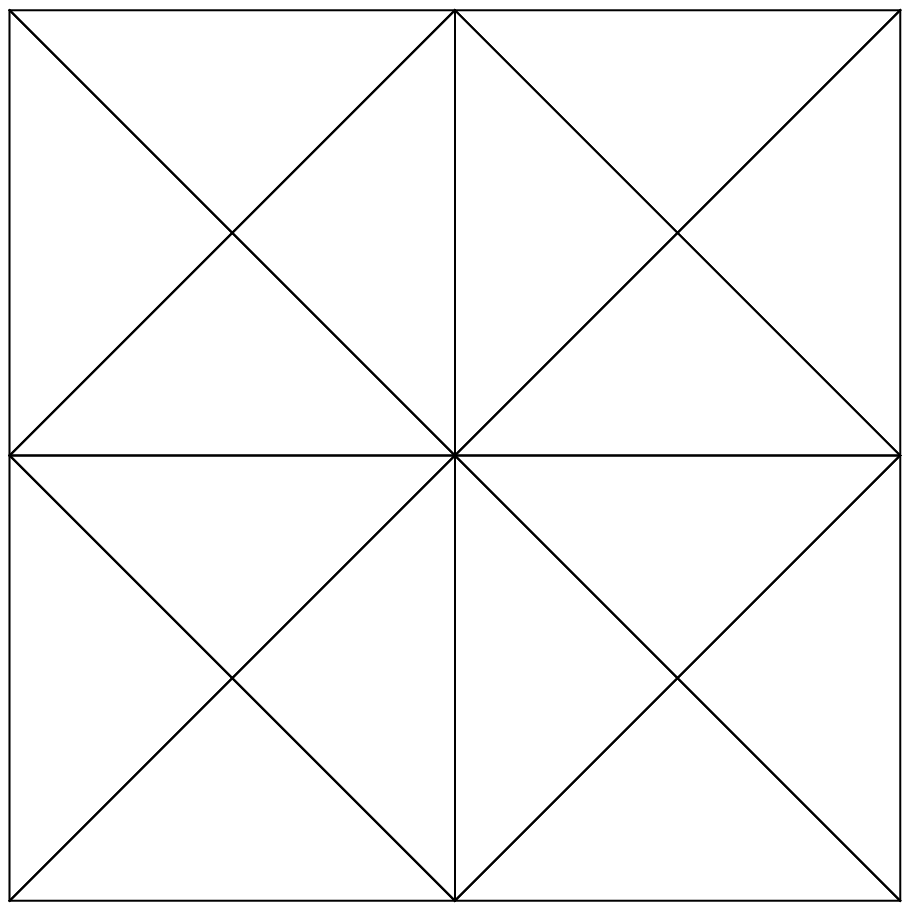}
 \includegraphics[trim={0 0 0 0},clip,width=1.5cm,height=1.5cm,scale=0.7]{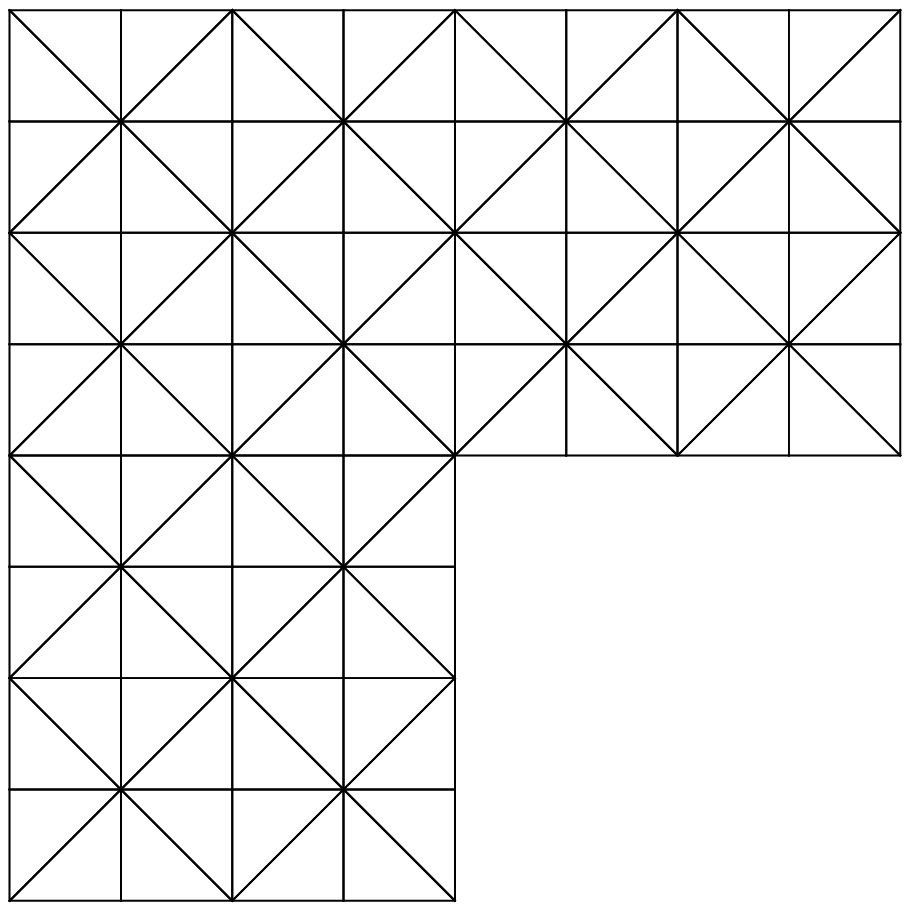}
 \caption{The initial meshes $\T_{0}$ used in the the adaptive \textbf{Algorithm}~\ref{Algorithm} when $\Omega$ is a square (left) and a two--dimensional L--shaped domain (right).}
 \label{fig:meshes0}
 \end{figure}
 
We define the total number of degrees of freedom as $\textsf{Ndof}:=\dim \mathfrak{W} + \dim \mathfrak{P}$, where $(\mathfrak{W},\mathfrak{P}) = (\mathbf{V}(\T),\mathcal{P}(\T))$ for the Taylor-Hood approximation, and $(\mathfrak{W},\mathfrak{P}) = (\mathbf{V}_{\mathrm{stab}}(\T), \mathcal{P}_{\ell,\mathrm{stab}}(\T))$ in the low--order stabilized setting. We measure the error in the $\mathcal{X}$-norm, that is
\begin{equation}\label{eq:error_total}
 \|(\boldsymbol{e}_{\bu},e_{p})\|_{\mathcal{X}}
 :=
 \left( \| \GRAD( \bu-\bu_{\T} )\|_{\bL^{2}(\dist^{\alpha},\Omega)}^{2}+\|p-p_{\T}\|_{L^2(\dist^{\alpha},\Omega)}^{2} \right)^{\frac{1}{2}}.
 \end{equation}

\footnotesize{
\begin{algorithm}[ht]
\caption{\textbf{ Adaptive Algorithm.}}
\label{Algorithm}
Input: Initial mesh $\T_{0}$, interior point $z\in\Omega$, $\alpha$, and stabilization parameters;\\
\textbf{1:} Solve the discrete problem \eqref{eq:StokesDiscrete} (\eqref{eq:StokesDiscrete_STAB}); \\  
\textbf{2:} For each $T \in \mathscr{T}$ compute the local error indicator $\E_\alpha(\bu_{\T},p_{\T};T)$ ($\mathcal{E}_{\alpha,\mathrm{stab}}(\bu_{\T},p_{\T};T)$) given as in \eqref{eq:global_estimator} (\eqref{eq:est_stab});
\\
\textbf{3:} Mark an element $T \in \T$ for refinement if
\[
  \E_\alpha(\bu_{\T},p_{\T};T) > \frac12 \max_{T' \in \T} \E_\alpha(\bu_{\T},p_{\T};T'),
\]
with a similar condition for $\mathcal{E}_{\alpha,\mathrm{stab}}(\bu_{\T},p_{\T};T)$;
\\
\textbf{4:} From step $\boldsymbol{3}$, construct a new mesh, using a longest edge bisection algorithm. Set $i \leftarrow i + 1$, and go to step $\boldsymbol{1}$.
\end{algorithm}}
\normalsize
\subsection{Convex and non--convex domains with homogeneous boundary conditions} 
First, we explore the performance of our devised a posteriori error estimators in problems where no analytical solution is available: convex and non--convex domains $\Omega$ are considered. 

\subsubsection{Example 1: Convex domain}
We consider the square domain $\Omega=(0,1)^{2}$, $\bF=(1 ,1)^{\intercal}$ and $z=(0.5,0.5)^\intercal$. We fix the exponent of the Muckenhoupt weight $\dist^{\alpha}$, defined in \eqref{distance_A2}, as $\alpha=1.5$. 
 \begin{figure}[h]
 \begin{flushleft}
 \begin{minipage}{0.25\textwidth}\centering
 \includegraphics[trim={0 0 0 0},clip,width=3cm,height=3.5cm,scale=0.7]{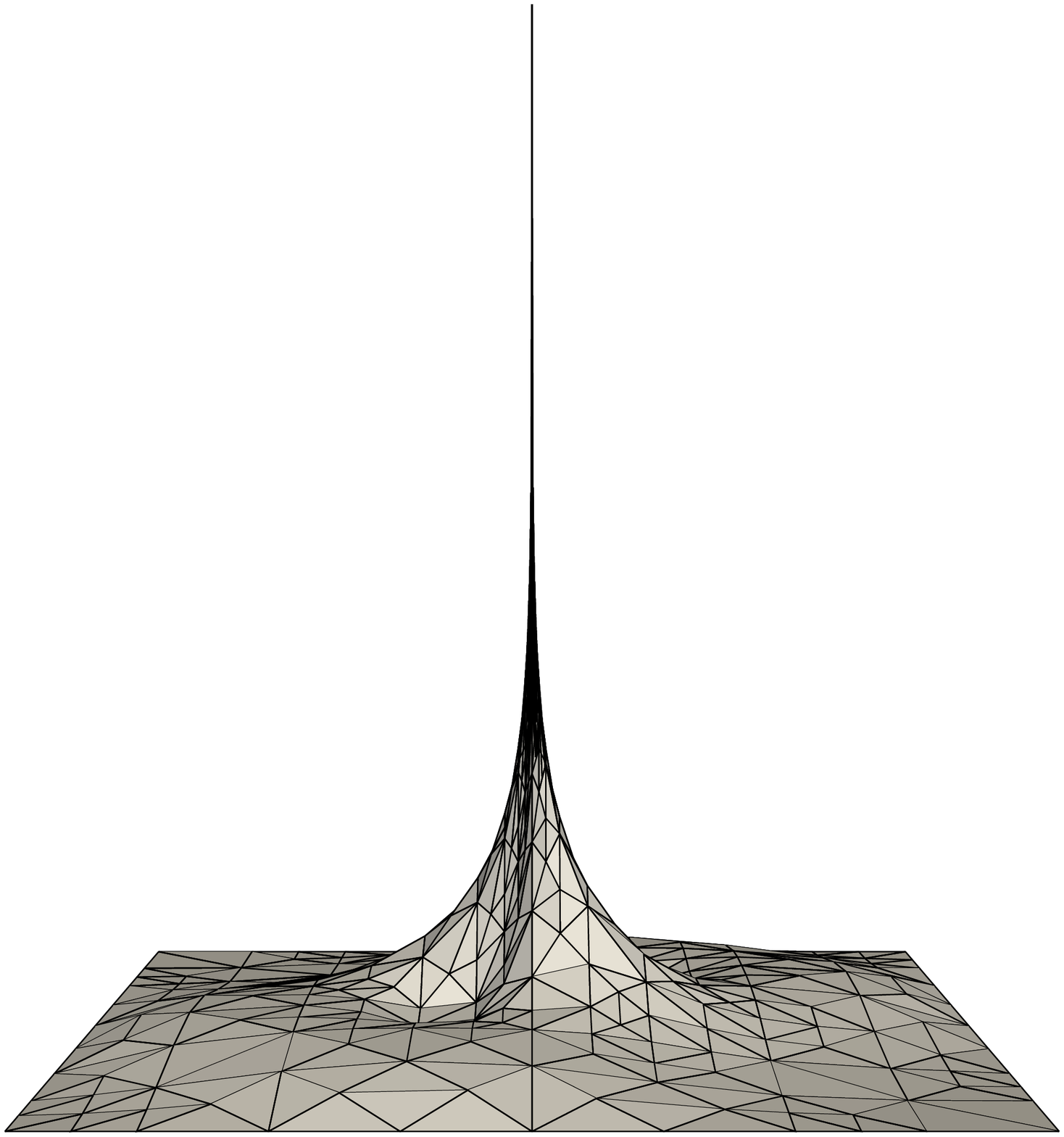}\\
 \tiny{$|\bu_{\T}|$}\\
 \includegraphics[trim={0 0 0 0},clip,width=3cm,height=3.5cm,scale=0.7]{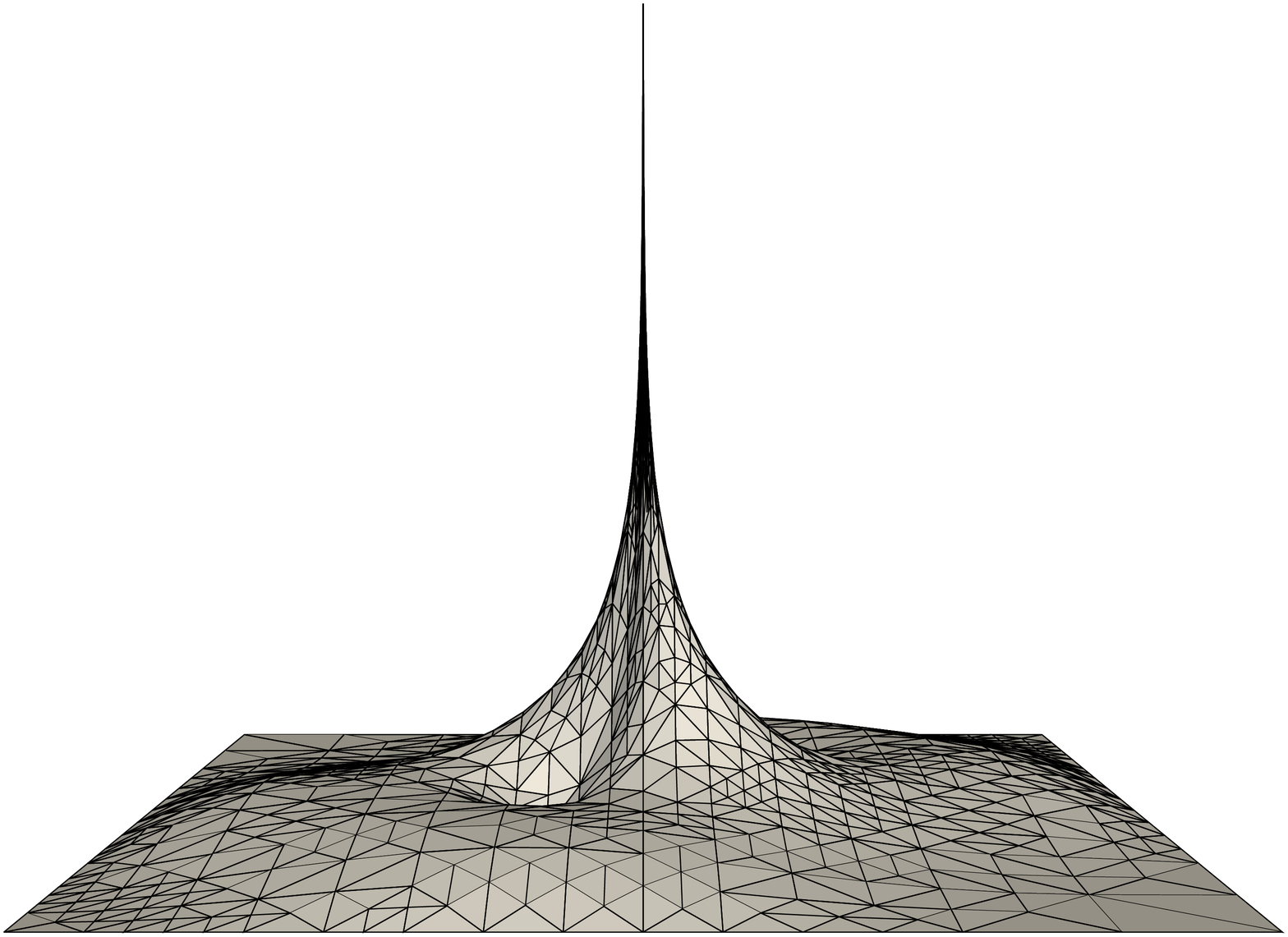}\\
 \tiny{$|\bu_{\T}|$}
 \end{minipage}
 \begin{minipage}{0.25\textwidth}\centering
 \includegraphics[trim={0 0 0 0},clip,width=3cm,height=3.5cm,scale=0.7]{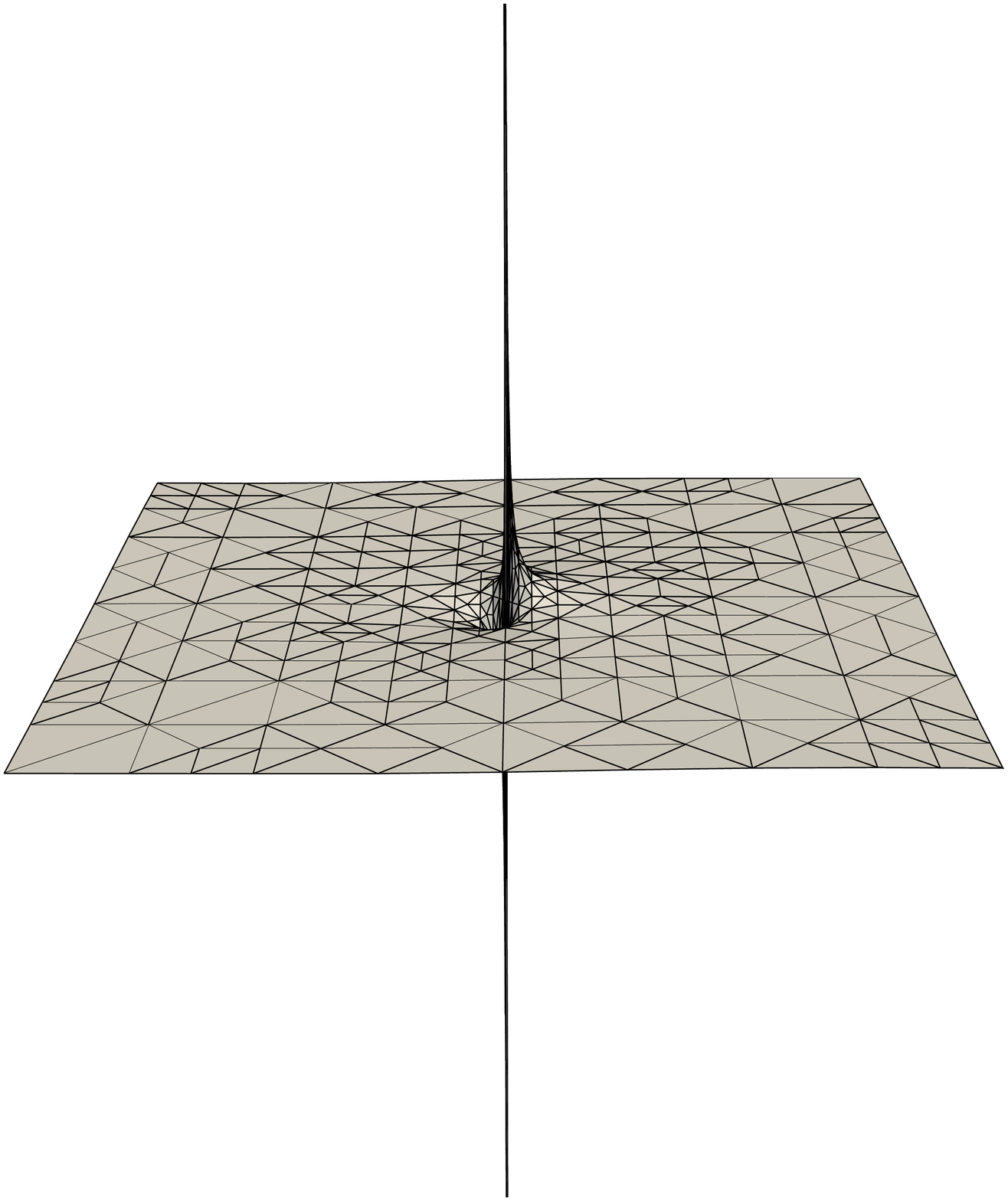}\\
 \tiny{$|p_{\T}|$}\\
 \includegraphics[trim={0 0 0 0},clip,width=3cm,height=3.5cm,scale=0.7]{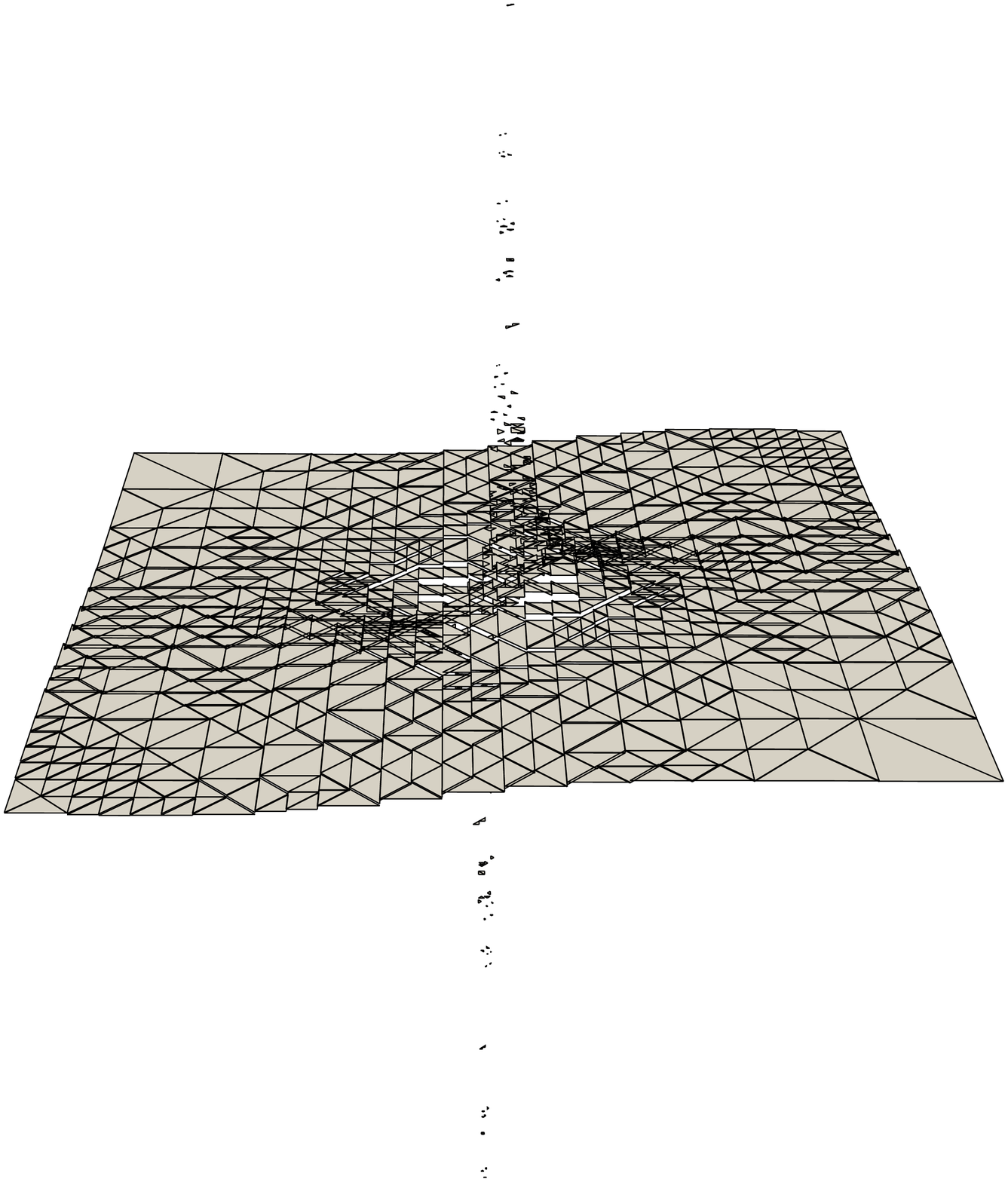}\\
 \tiny{$|p_{\T}|$}
 \end{minipage}
 \begin{minipage}{0.25\textwidth}\centering
 \includegraphics[trim={0 0 0 0},clip,width=3cm,height=3.5cm,scale=0.7]{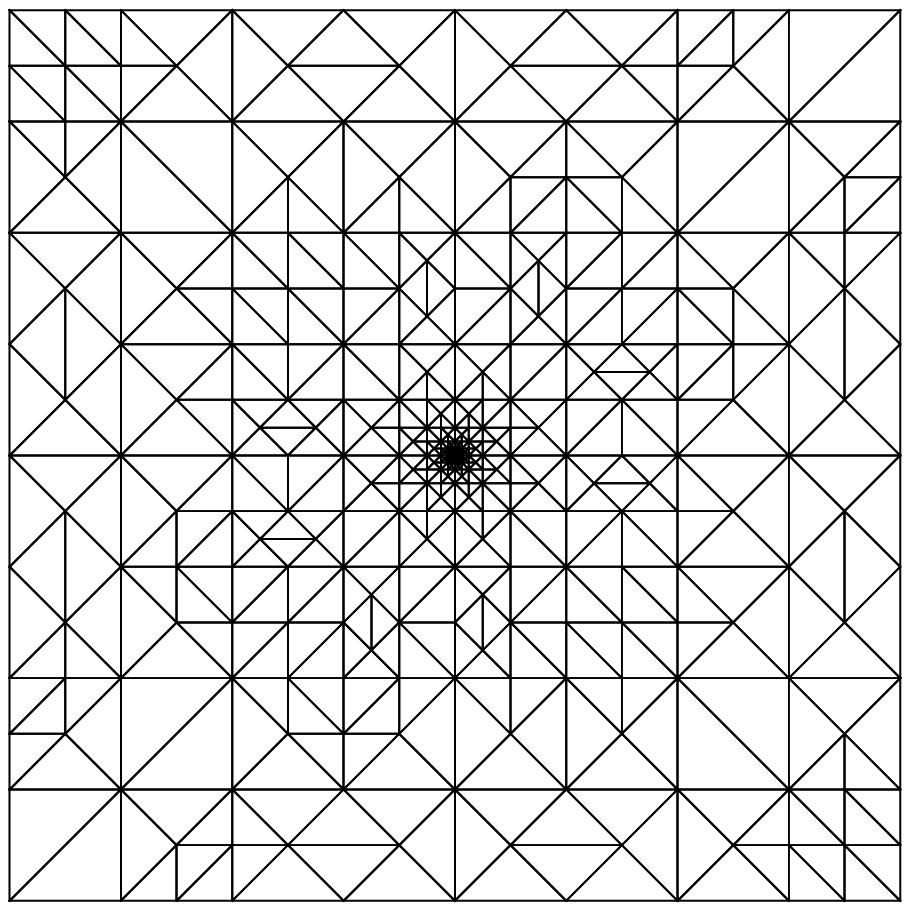}\\
 \tiny{$10^{th}$ adaptive refinement}
 \includegraphics[trim={0 0 0 0},clip,width=3cm,height=3.5cm,scale=0.7]{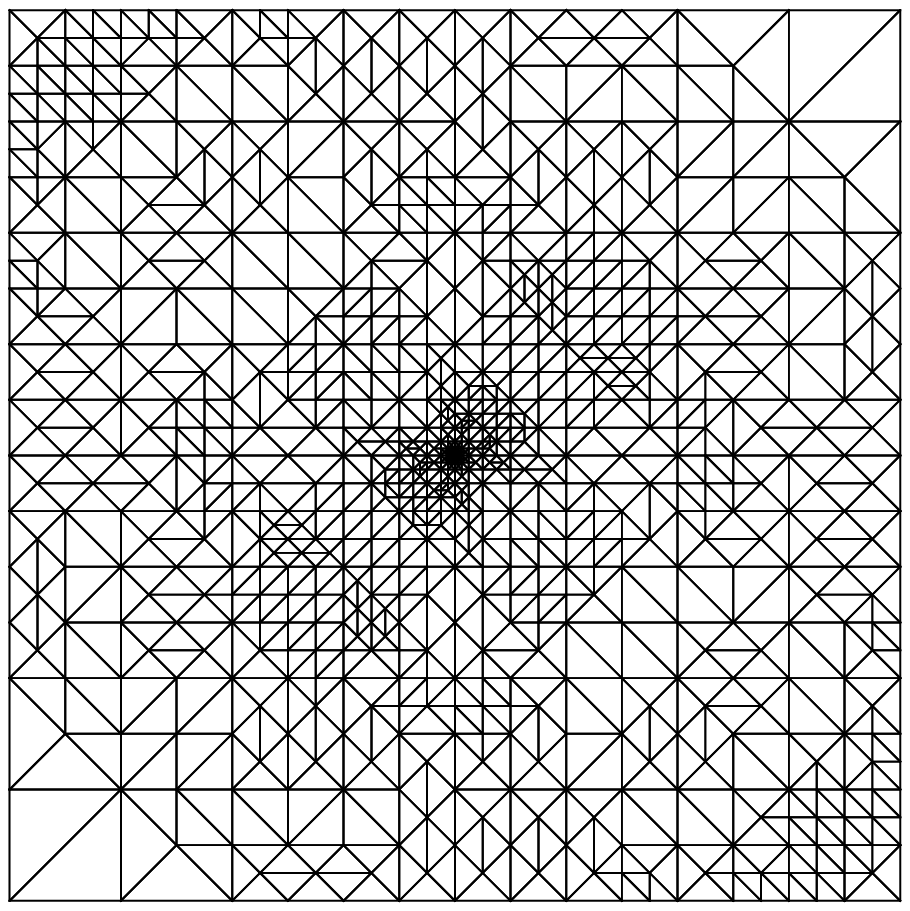}\\
 \tiny{$10^{th}$ adaptive refinement}
 \end{minipage}
 \begin{minipage}{0.25\textwidth}\centering
 \psfrag{error - total - vel - pre}{\large $\|(\boldsymbol{e}_{\bu},e_{p})\|_{\mathcal{X}}$}
 \psfrag{estimator}{\large $\E_{1.5}$}
 \psfrag{rate(h2)}{$\textsf{Ndof}^{-1}$}
 \includegraphics[trim={0 0 0 0},clip,width=3cm,height=3.5cm,scale=0.55]{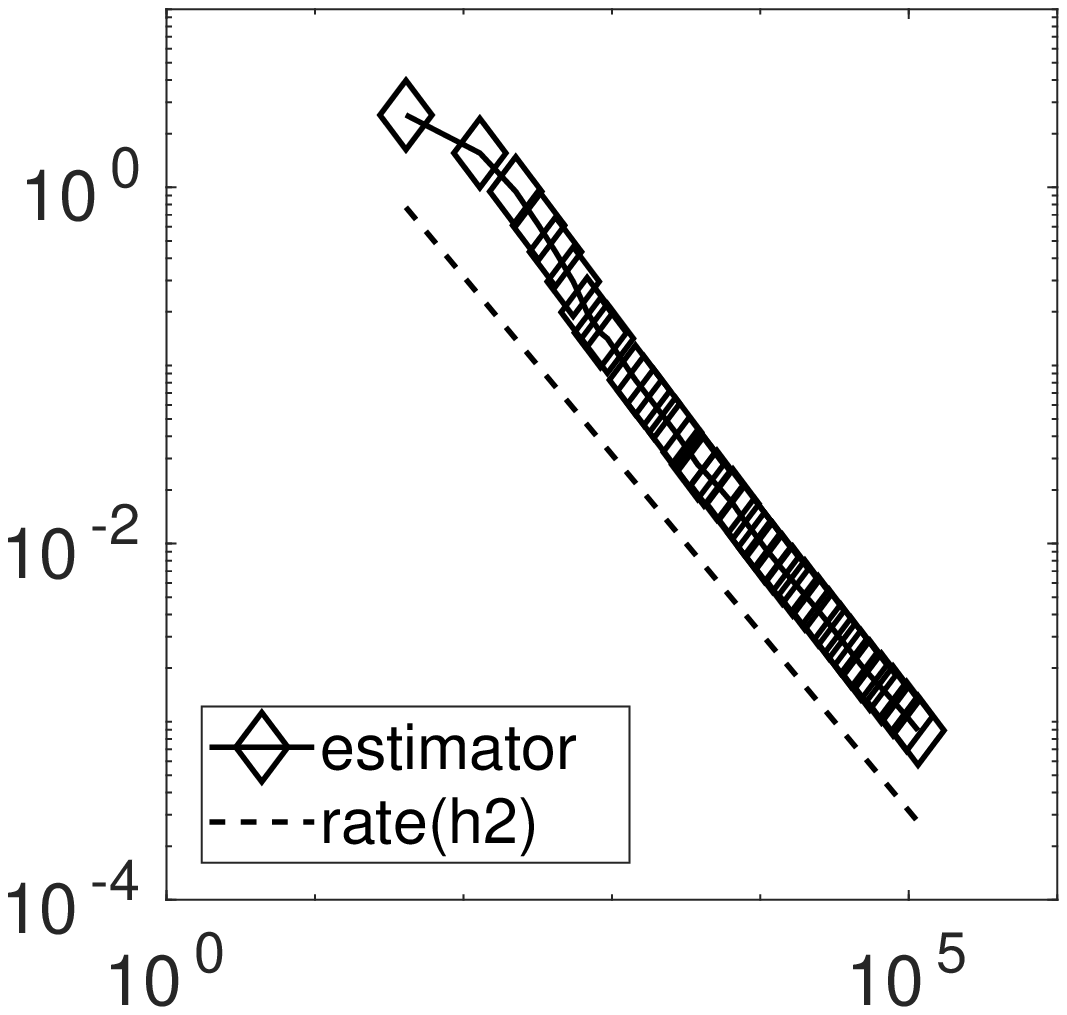}\\
 \psfrag{estimator}{\large $\mathcal{E}_{1.5,\mathrm{stab}}$}
 \psfrag{rate(h2)}{$\textsf{Ndof}^{-1/2}$}
 \includegraphics[trim={0 0 0 0},clip,width=3cm,height=3.5cm,scale=0.55]{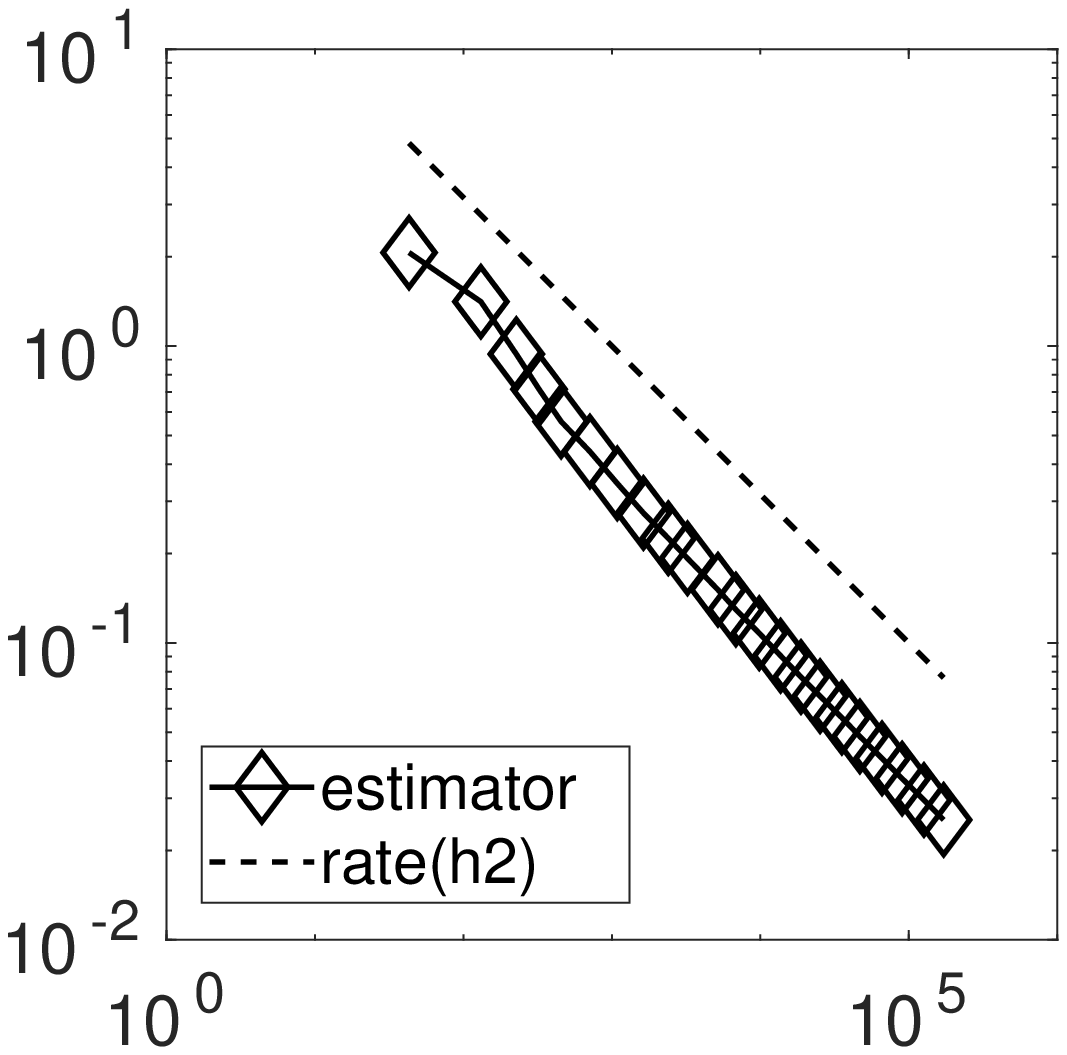}
 \end{minipage}
 \end{flushleft}
 \caption{Example 1: Finite element approximations of $|\bu_{\T}|$ and $p_{\T}$, the mesh obtained after $10$ adaptive refinements and the experimental rate of convergence for the error estimator when Taylor--Hood approximation is used (top) and when the low--order stabilized approximation is considered (bottom).}
 \label{fig:ex1th-st}
 \end{figure}
 \begin{figure}[h]
 \centering
 \psfrag{estima-1}{\large $\E_{0.5}$}
 \psfrag{estima-2}{\large $\E_{0.75}$}
 \psfrag{estima-3}{\large $\E_{1}$}
 \psfrag{estima-4}{\large $\E_{1.25}$}
 \psfrag{estima-5}{\large $\E_{1.5}$}
 \psfrag{rate(h2)}{$\textsf{Ndof}^{-1}$}
 \includegraphics[trim={0 0 0 0},clip,width=5cm,height=5cm,scale=0.55]{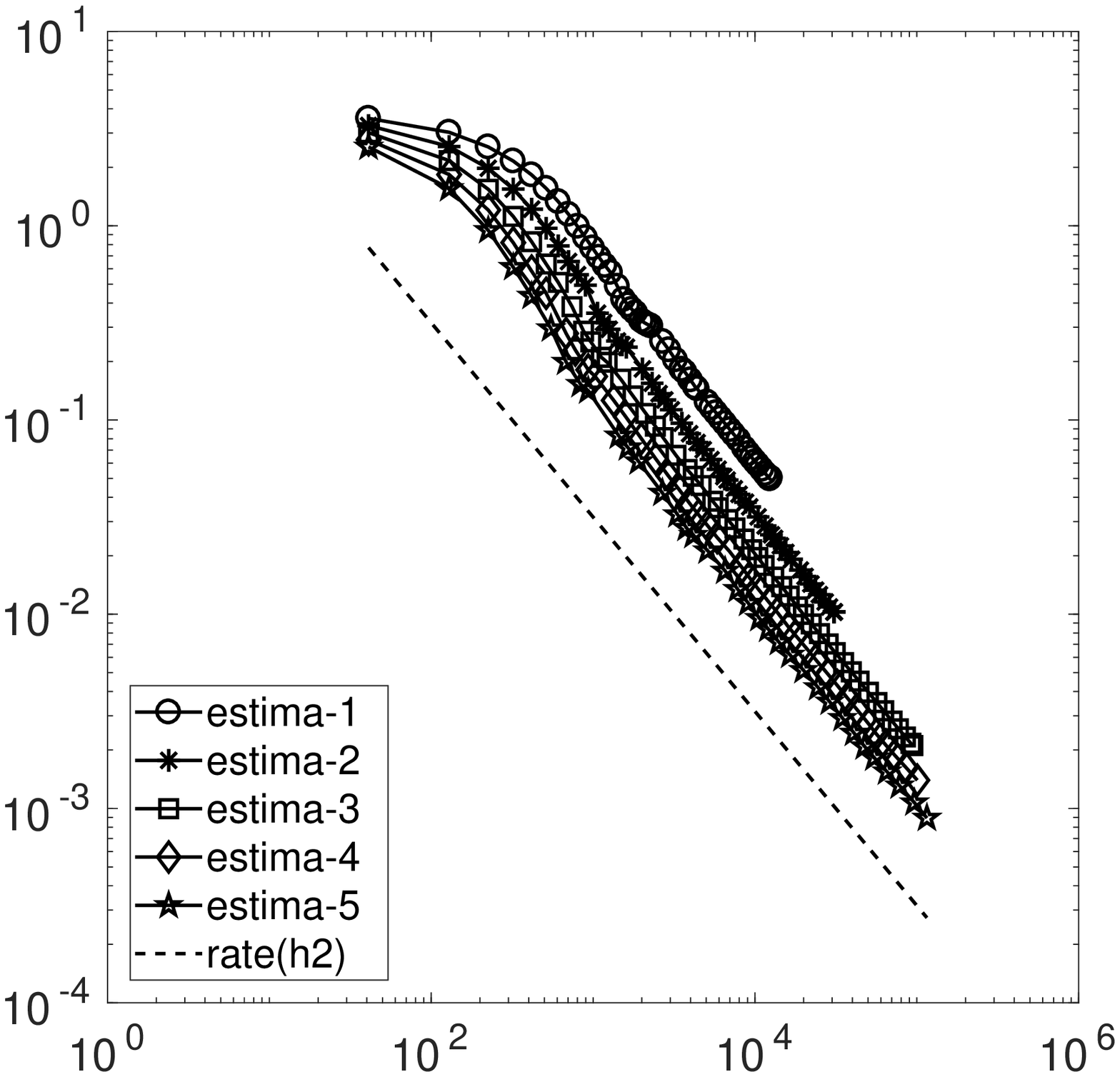}
 \psfrag{estima-1}{\large $\mathcal{E}_{0.5,\mathrm{stab}}$}
 \psfrag{estima-2}{\large $\mathcal{E}_{0.75,\mathrm{stab}}$}
 \psfrag{estima-3}{\large $\mathcal{E}_{1,\mathrm{stab}}$}
 \psfrag{estima-4}{\large $\mathcal{E}_{1.25,\mathrm{stab}}$}
 \psfrag{estima-5}{\large $\mathcal{E}_{1.5,\mathrm{stab}}$}
 \psfrag{rate(h2)}{$\textsf{Ndof}^{-1/2}$}
 \includegraphics[trim={0 0 0 0},clip,width=5cm,height=5cm,scale=0.55]{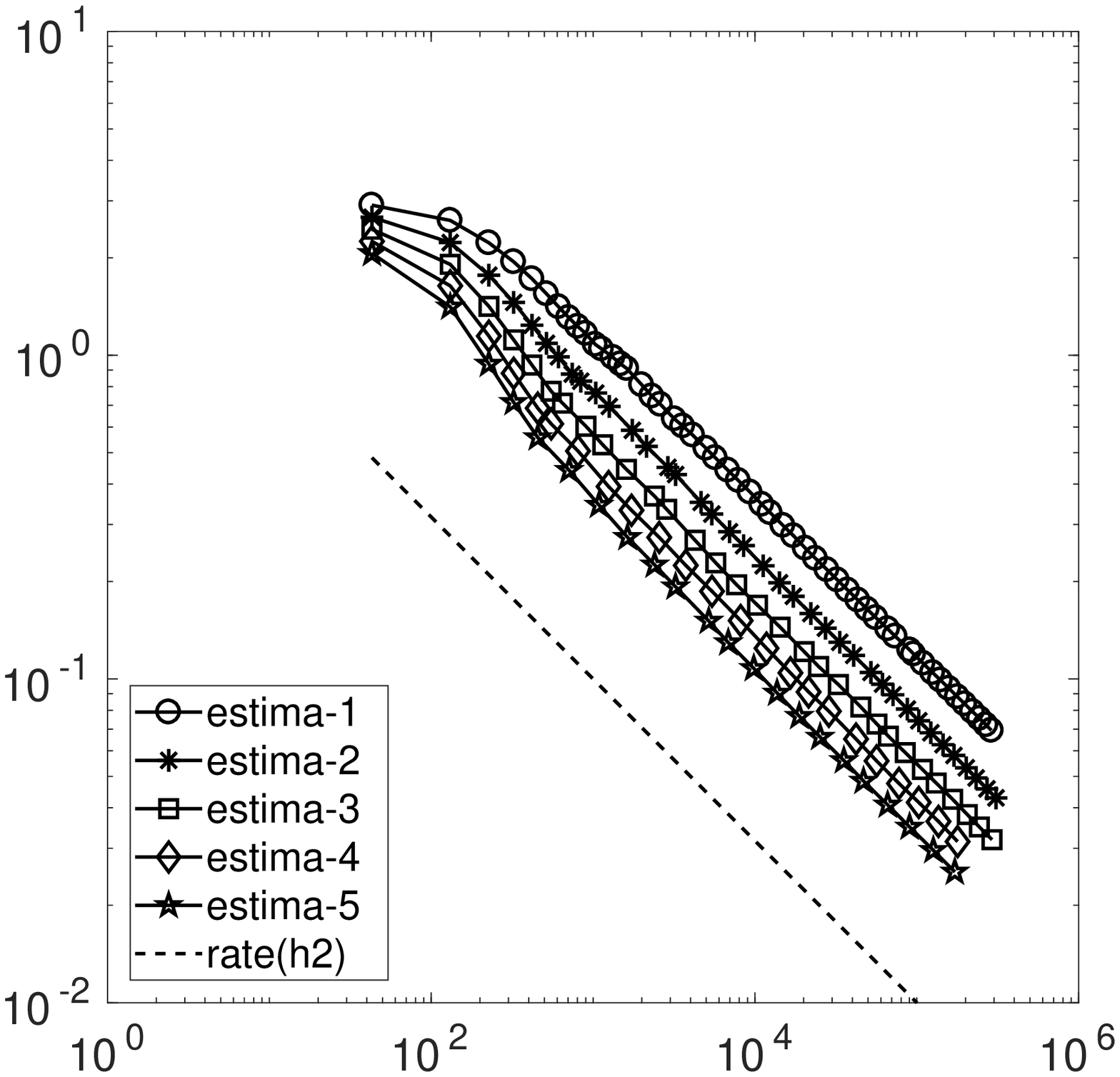}
 \caption{Example 1: For $\alpha=\{0.5,0.75,1,1.25,1.5\}$, we present the experimental rates of convergence for the error estimators $\E_{\alpha}$ (left) and $\mathcal{E}_{\alpha,\mathrm{stab}}$ (right), which are based on Taylor--Hood approximation and low--order stabilized approximation, respectively.}
 \label{fig:ex1th-st-alphas}
 \end{figure}

\subsubsection{Example 2: L-shaped domain} 
We let $\Omega=(-1,1)^{2} \setminus[0,1)\times[-1,0)$, an L--shaped domain, set in  \eqref{eq:StokesDiscrete} the data to be $\bF=(1, 1)^\intercal$ and $z=(0.5,0.5)^\intercal$, and fix the exponent of the Muckenhoupt weight $\dist^{\alpha}$ in \eqref{distance_A2} as $\alpha=1.5$.
 \begin{figure}[h]
 \begin{flushleft}
 \begin{minipage}{0.25\textwidth}\centering
 \includegraphics[trim={0 0 0 0},clip,width=3cm,height=3.5cm,scale=0.7]{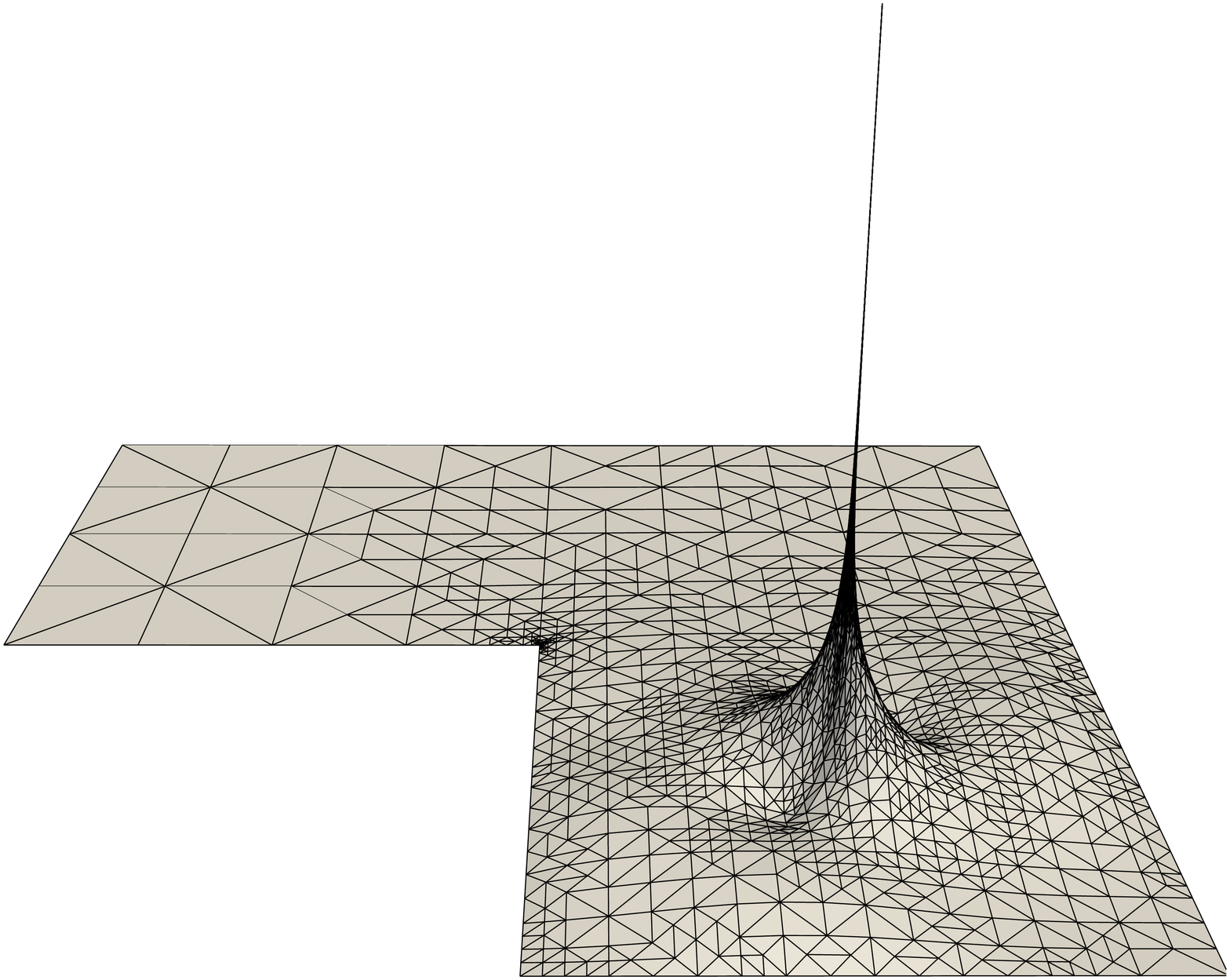}\\
 \tiny{$|\bu_{\T}|$}\\
 \includegraphics[trim={0 0 0 0},clip,width=3cm,height=3.5cm,scale=0.7]{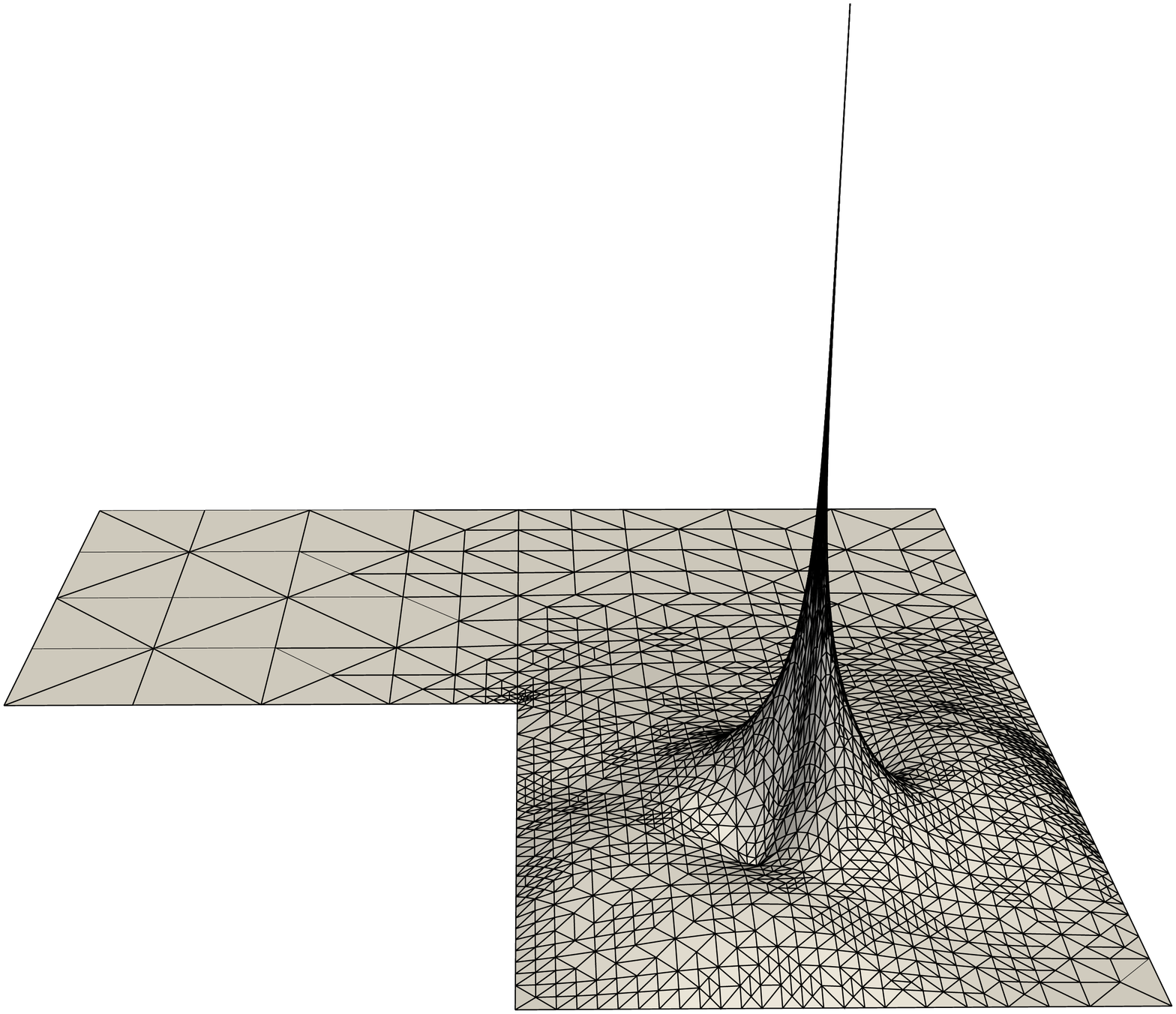}\\
 \tiny{$|\bu_{\T}|$}
 \end{minipage}
 \begin{minipage}{0.25\textwidth}\centering
 \includegraphics[trim={0 0 0 0},clip,width=3cm,height=3.5cm,scale=0.7]{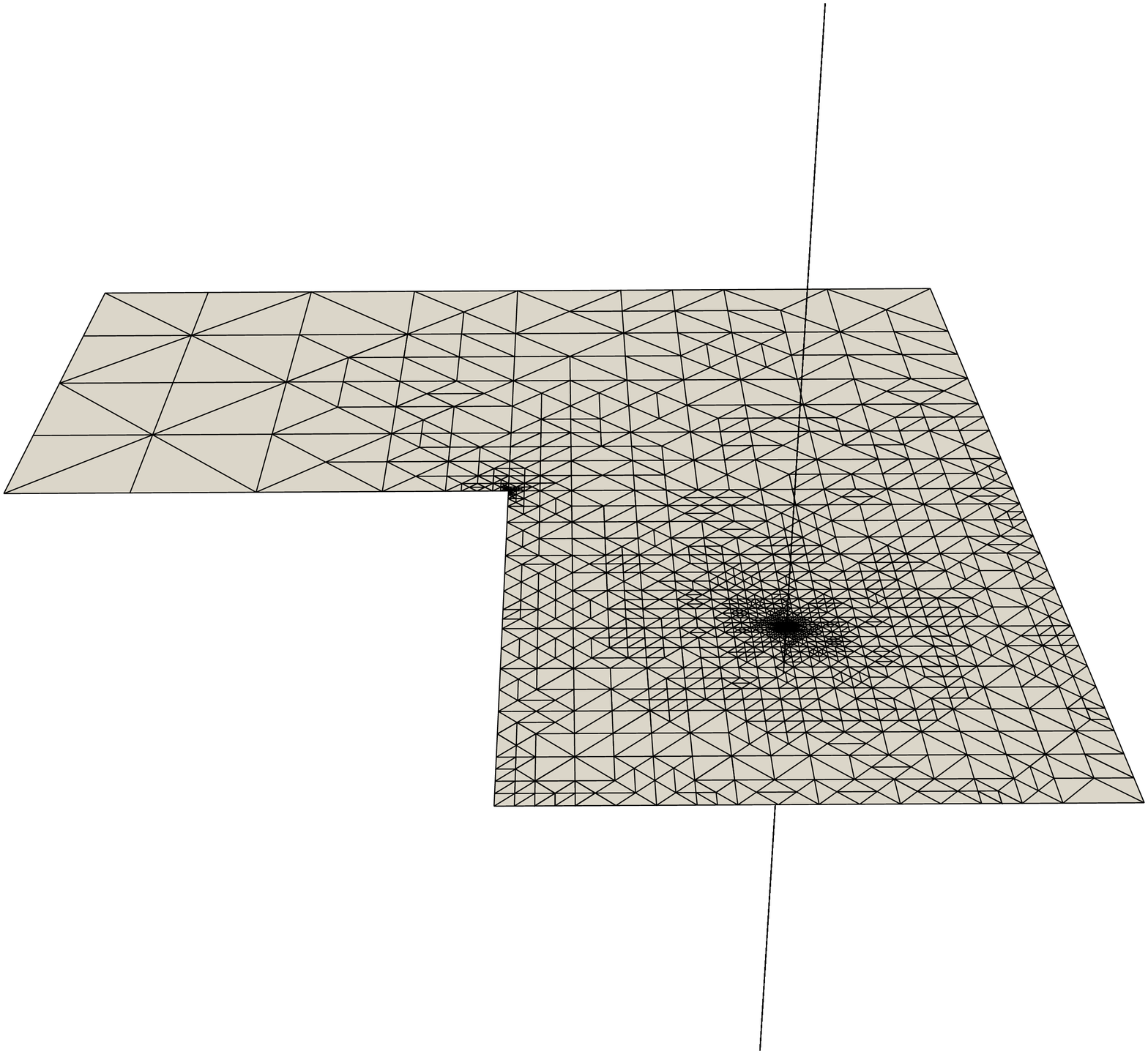}\\
 \tiny{$|p_{\T}|$}\\
 \includegraphics[trim={0 0 0 0},clip,width=3cm,height=3.5cm,scale=0.7]{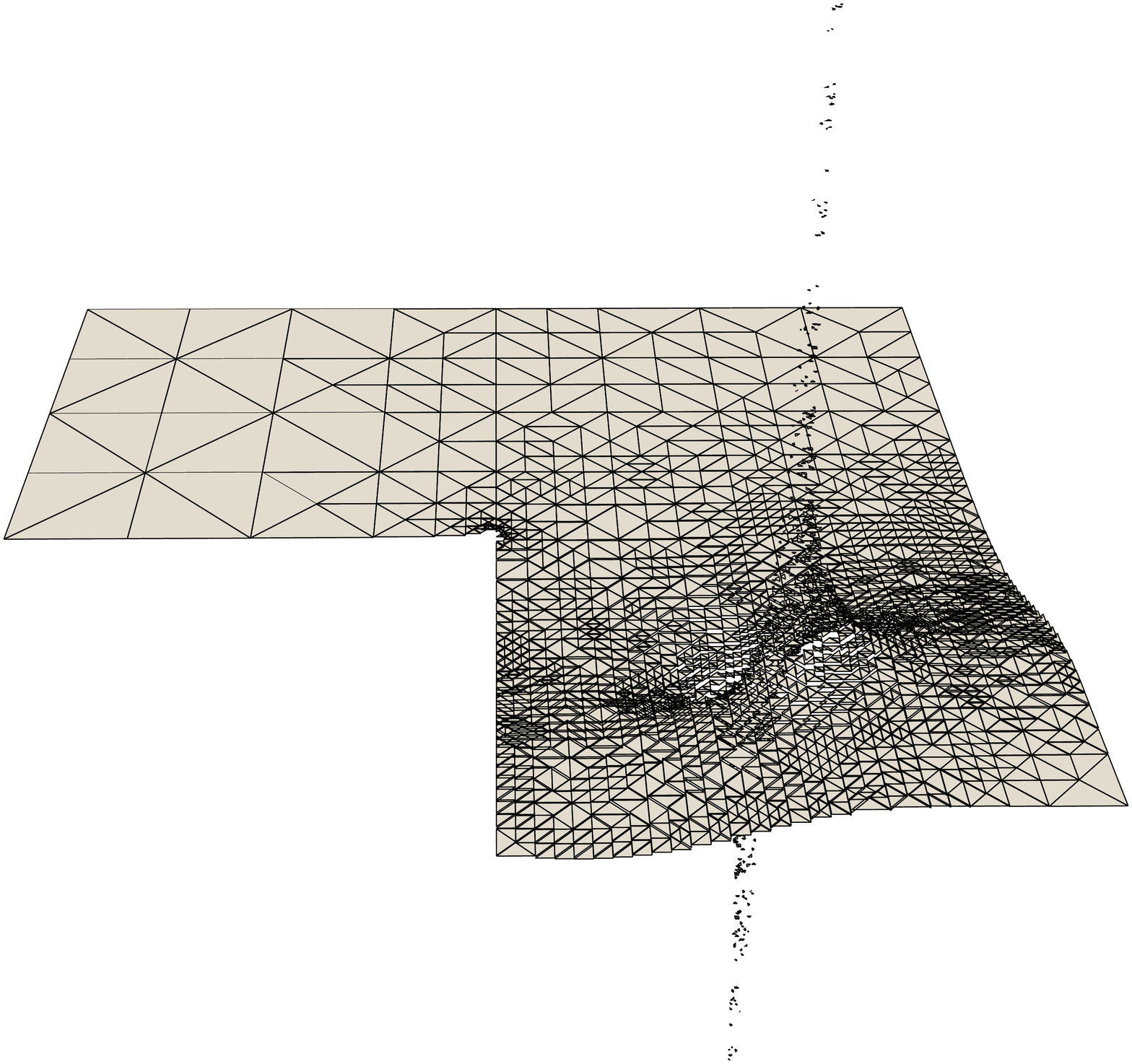}\\
 \tiny{$|p_{\T}|$}
 \end{minipage}
 \begin{minipage}{0.25\textwidth}\centering
 \includegraphics[trim={0 0 0 0},clip,width=3cm,height=3.5cm,scale=0.7]{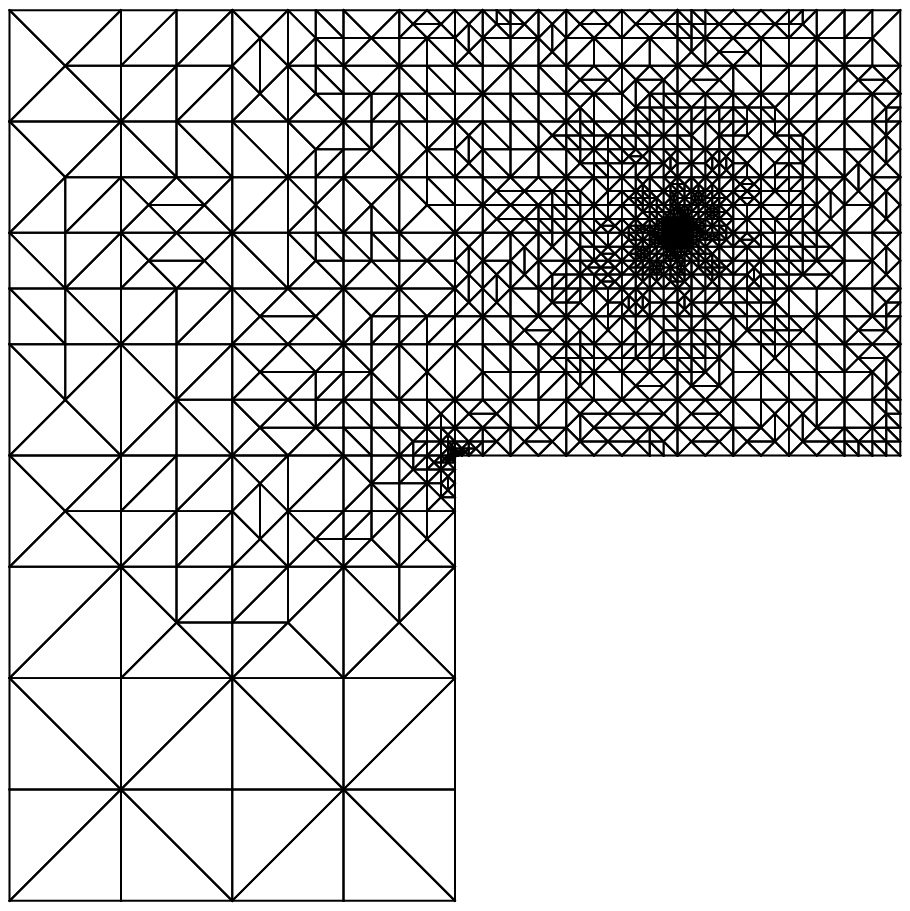}\\
 \tiny{$20^{th}$ adaptive refinement}\\
 \includegraphics[trim={0 0 0 0},clip,width=3cm,height=3.5cm,scale=0.7]{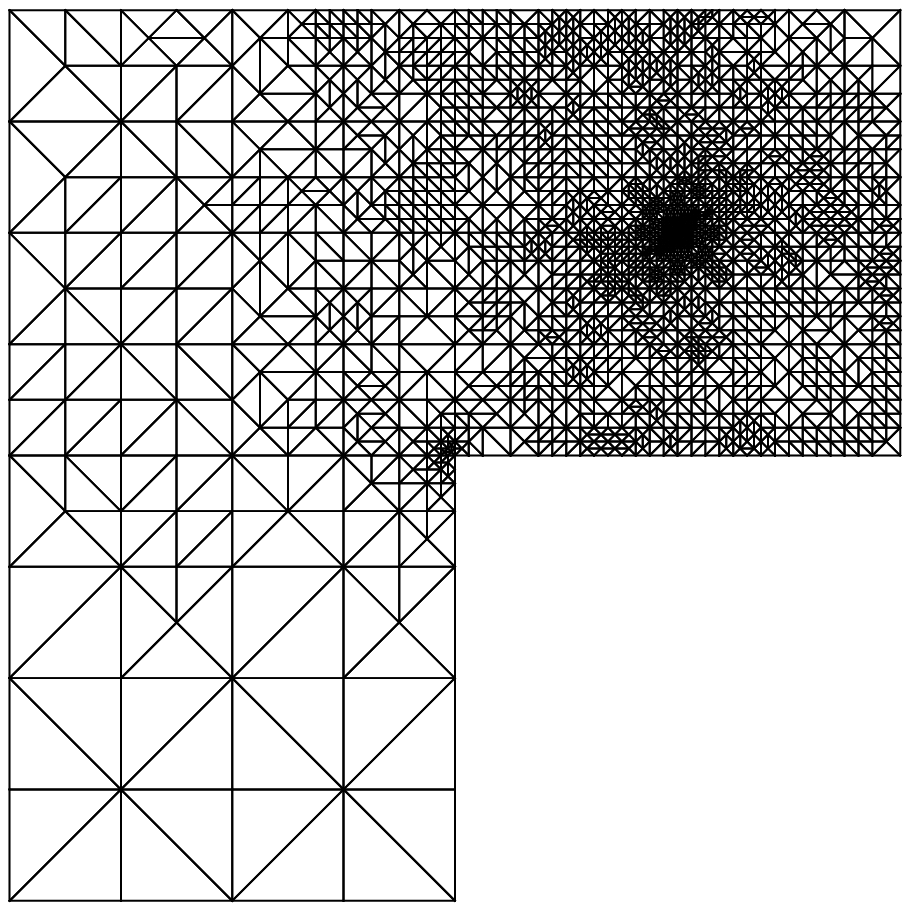}\\
 \tiny{$13^{th}$ adaptive refinement}
 \end{minipage}
 \begin{minipage}{0.25\textwidth}\centering
 \psfrag{error - total - vel - pre}{\large $\|(\boldsymbol{e}_{\bu},e_{p})\|_{\mathcal{X}}$}
 \psfrag{estimator}{\large $\E_{1.5}$}
 \psfrag{rate(h2)}{$\textsf{Ndof}^{-1}$}
 \includegraphics[trim={0 0 0 0},clip,width=3cm,height=3.5cm,scale=0.55]{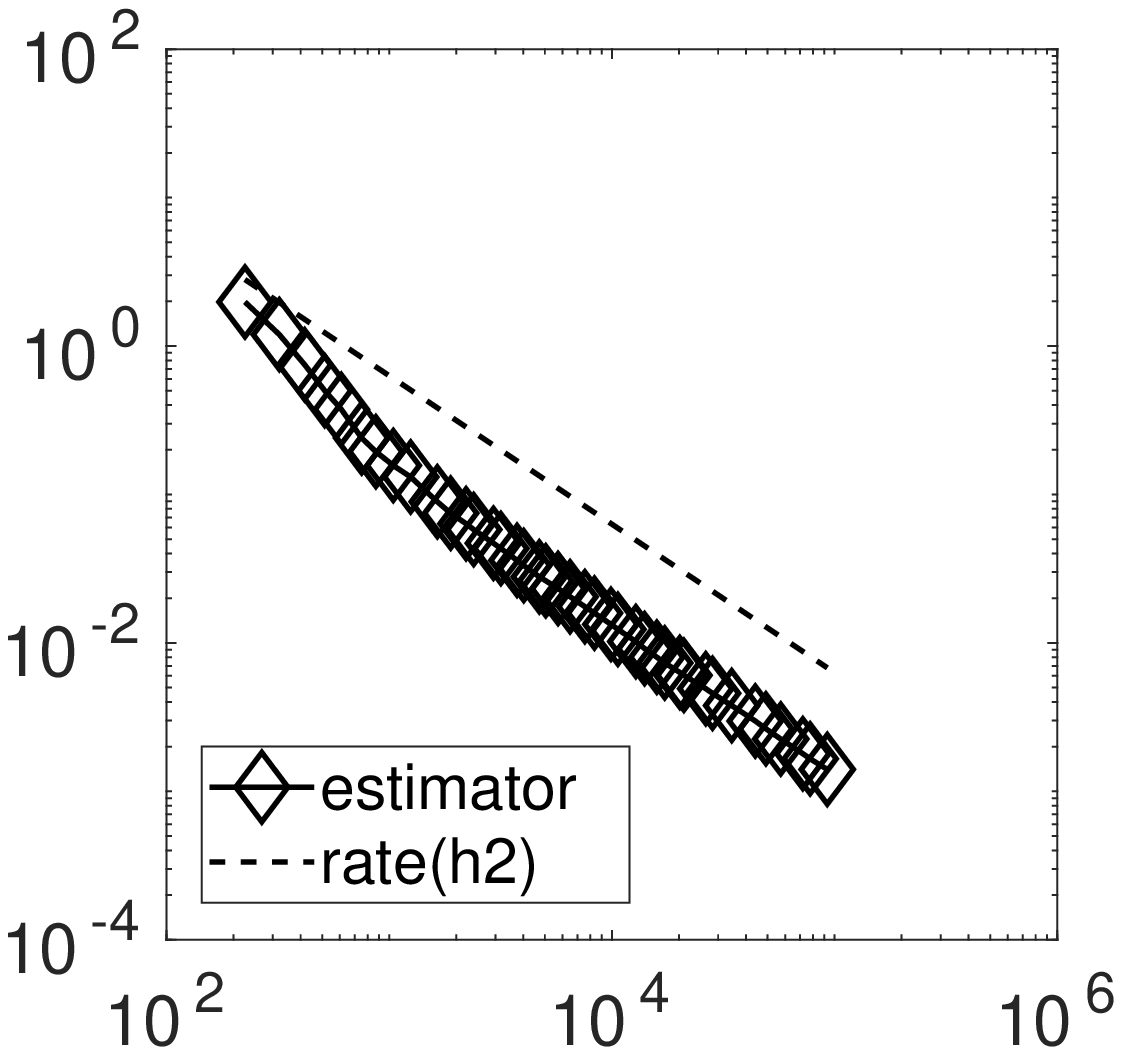}\\
  \psfrag{estimator}{\large $\mathcal{E}_{1.5,\mathrm{stab}}$}
 \psfrag{rate(h2)}{$\textsf{Ndof}^{-1/2}$}
 \includegraphics[trim={0 0 0 0},clip,width=3cm,height=3.5cm,scale=0.55]{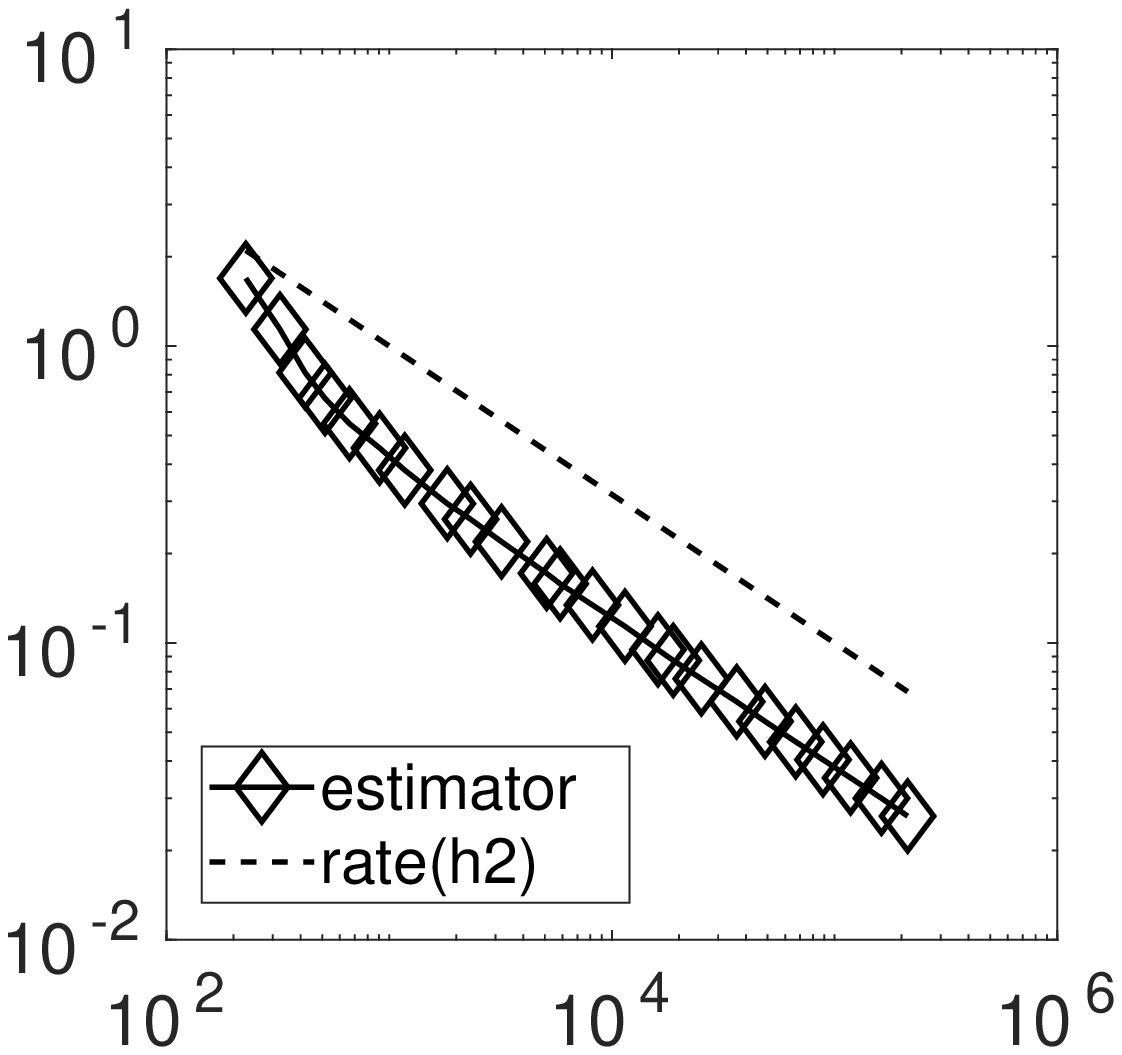}
 \end{minipage}
 \end{flushleft}
 \caption{Example 2: Finite element approximations of $|\bu_{\T}|$ and $p_{\T}$, the mesh obtained after $M$ adaptive refinements, and the experimental rate of convergence for the error estimator when Taylor--Hood approximation is used (top) and when the low--order stabilized approximation is considered; $M=20$ (top) and $M=13$ (bottom).} 
 \label{fig:ex2th-st}
 \end{figure}

In Figures \ref{fig:ex1th-st} and \ref{fig:ex2th-st}, we present the results obtained by the \textbf{Algorithm}~\ref{Algorithm} when is driven by the local indicators $\E_{1.5}$ (Taylor--Hood approximation) and  $\mathcal{E}_{1.5,\mathrm{stab}}$ (low--order stabilized approximation). We show the finite element approximations of $|\bu_{\T}|$ and $p_{\T}$ and the final meshes obtained by the aforementioned schemes. We also present the experimental rates of convergence rate for the estimators $\E_{1.5}$ and $\mathcal{E}_{1.5,\mathrm{stab}}$. We observe that optimal experimental rates of convergence are attained, and that most of the adaptive refinement is concentrated around the delta source. In Figure \ref{fig:ex1th-st-alphas}, we present the experimental rate of convergence for the total error estimators $\E_{\alpha}$ and $\mathcal{E}_{\alpha,\mathrm{stab}}$ when $\alpha\in\{0.5,0.75,1,1.25,1.5\}$. It can be observed that, for all the cases that we have considered, optimal rates of convergence are attained. 
\subsection{Example 3: A series of Dirac sources}

We now go beyond the presented theory and include a series of Dirac delta sources on the right--hand side of the momentum equation. To be precise, we will replace the momentum equation in \eqref{eq:StokesStrong} by
 \begin{equation}\label{deltass_stokes_flow}
 -\Delta\bu + \nabla p = \sum_{z\in\mathcal{Z}}\bF_{z}\delta_{z} \textrm{ in } \Omega,
 \end{equation}
 where $\mathcal{Z}\subset\Omega$ denotes a finite set with cardinality $\#\mathcal{Z}$ which is such that $1< \#\mathcal{Z}<\infty$ and $\{\bF_z\}_{z \in \mathcal{Z}} \subset \R^d$. Based on the results of \cite[Section 5]{MR3679932}, we introduce the weight
 \begin{equation}\label{new_A2}
 \rho(x)=\left\{
 \begin{array}{lc}
 \dist^{\alpha}, & \exists~z\in\mathcal{Z}:|x-z|<\frac{d_{\mathcal{Z}}}{2} ,\\
 1, & |x-z|\geq\frac{d_{\mathcal{Z}}}{2},~\forall~z\in\mathcal{Z},
 \end{array}
 \right.
 \end{equation}
 where
 $ 
 d_{\mathcal{Z}}=
 \min
 \left\{
 \textsf{dist}(\mathcal{Z},\partial\Omega),\min\left\{|z-z'|:z,z'\in\mathcal{Z},z\neq z'\right\}
 \right\}
 $
 and modify the definition \eqref{XandY}, of the spaces $\mathcal{X}$ and $\mathcal{Y}$, as follows:
 \begin{equation}
 \label{new_XandY}
 \mathcal{X} = \bH^{1}_0(\rho,\Omega) \times L^2(\rho,\Omega)/ \mathbb{R}, \quad
 \mathcal{Y} = \bH^{1}_0(\rho^{-1},\Omega) \times L^2(\rho^{-1},\Omega)/ \mathbb{R},
 \end{equation}
It can be proved that $\rho$ belongs to the Muckenhoupt class $A_2$ \cite{MR3215609} and to the restricted class $A_2(\Omega)$.
 
Define 
 \begin{equation}
\label{eq:DT_new}
  D_{T,\mathcal{Z}} := \min_{z \in \mathcal{Z}} \left\{  \max_{x \in T} |x-z| \right\}.
\end{equation}
 We thus propose the following error estimator when the Taylor--Hood scheme is considered:
$$
\mathscr{D}_{\alpha}(\bu_{\T},p_{\T};\T):=\left(\sum_{T\in\T}\mathscr{D}_{\alpha}^{2}(\bu_{\T},p_{\T};T)\right)^{\frac{1}{2}},
$$
where the local indicators are such that
 \begin{multline}
\mathscr{D}_{\alpha}(\bu_{\T},p_{\T};T):= \bigg( h_T^2D_{T,\mathcal{Z}}^{\alpha}   \|  \Delta \bu_{\T} - \nabla p_{\T} \|_{\bL^2(T)}^2 +  \|  \DIV \bu_{\T} \|_{L^2(\rho,T)}^2 
\\
+ h_T D_{T,\mathcal{Z}}^{\alpha}\| \llbracket (\nabla \bu_{\T}  - p_{\T} \mathbf{I})\rrbracket \cdot \boldsymbol{\nu} \|_{\bL^2(\partial T \setminus \partial \Omega)}^2 + \sum_{z \in \mathcal{Z}\cap T} h_{T}^{\alpha + 2 -d} | \bF_{z} |^2 \bigg)^{\frac{1}{2}}.
\label{eq:local_indicator_s}
\end{multline}

Similarly, when the low--order stabilized approximation scheme is considered, we consider the error estimator
\[
\mathcal{D}_{\alpha,\mathrm{stab}}(\bu_{\T},p_{\T};\T):=\left(\sum_{T\in\T}\mathcal{D}_{\alpha,\mathrm{stab}}^{2}(\bu_{\T},p_{\T};T)\right)^{\frac{1}{2}},\]
and the local error indicators
\begin{multline*}
\mathcal{D}_{\alpha,\mathrm{stab}}(\bu_{\T},p_{\T};T):= \bigg( h_T^2D_{T,\mathcal{Z}}^{\alpha}   \|  \Delta \bu_{\T} - \nabla p_{\T} \|_{\bL^2(T)}^2 +  (1+\tau_{\mathrm{div}}^2)\|  \DIV \bu_{\T} \|_{L^2(\rho,T)}^2 
\\
+ h_T D_{T,\mathcal{Z}}^{\alpha} \| \llbracket (\nabla \bu_{\T}  - p_{\T} \mathbf{I})\rrbracket \cdot \boldsymbol{\nu} \|_{\bL^2(\partial T \setminus \partial \Omega)}^2 + \sum_{z \in \mathcal{Z}\cap T} h_{T}^{\alpha + 2 -d} | \bF_{z} |^2 \bigg)^{\frac{1}{2}}.
\end{multline*}

Having defined the problem and estimators we, in particular, set
$\Omega=(0,1)^{2}$ and let 
\[
  \mathcal{Z}=\{(0.25,0.25)^\intercal,(0.25,0.75)^\intercal,(0.75,0.25)^\intercal,(0.75,0.75)^\intercal\}.
\]
We consider $\bF_{z}=(1 , 1)^\intercal$ for all $z \in\mathcal{Z}$ and fix the exponent of the Muckenhoupt weight $\rho$, which is defined in \eqref{new_A2}, as $\alpha=1.5$.
 \begin{figure}[h]
 \begin{flushleft}
 \begin{minipage}{0.25\textwidth}\centering
 \includegraphics[trim={0 0 0 0},clip,width=3cm,height=3cm,scale=0.7]{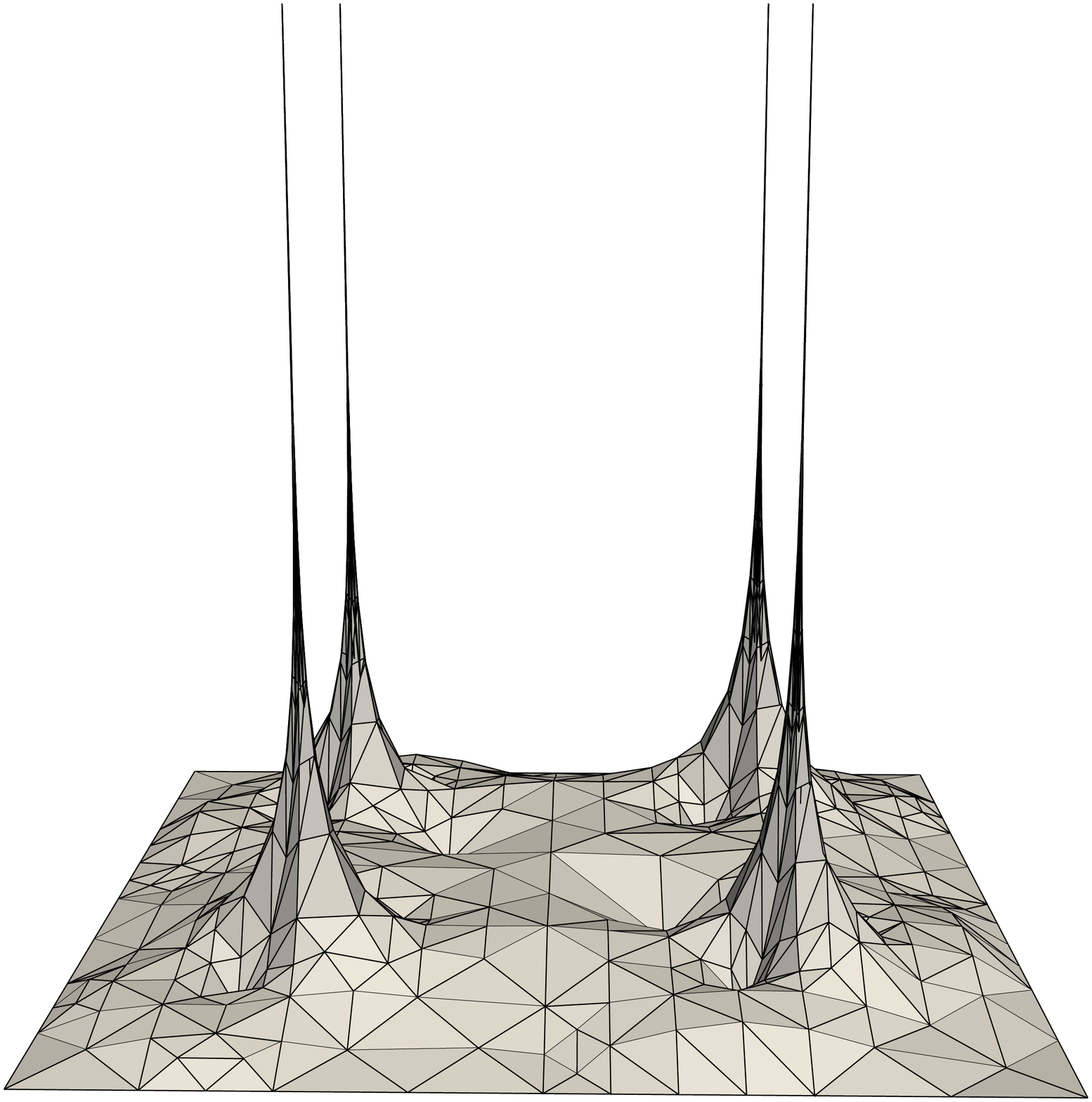}\\
 \tiny{$|\bu_{\T}|$}\\
 \includegraphics[trim={0 0 0 0},clip,width=3cm,height=3cm,scale=0.7]{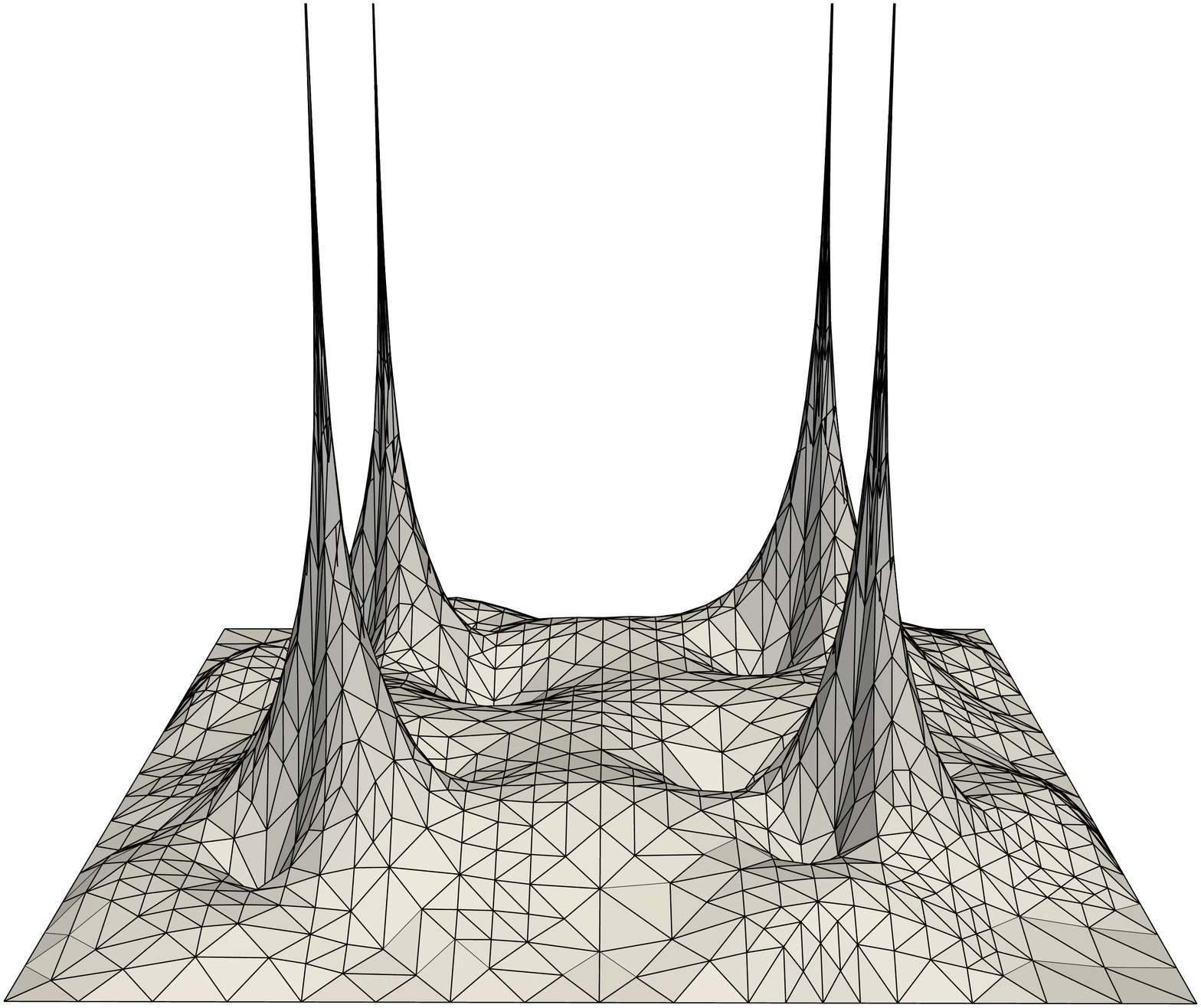}\\
 \tiny{$|\bu_{\T}|$}
 \end{minipage}
 \begin{minipage}{0.25\textwidth}\centering
 \includegraphics[trim={0 0 0 0},clip,width=3cm,height=3cm,scale=0.7]{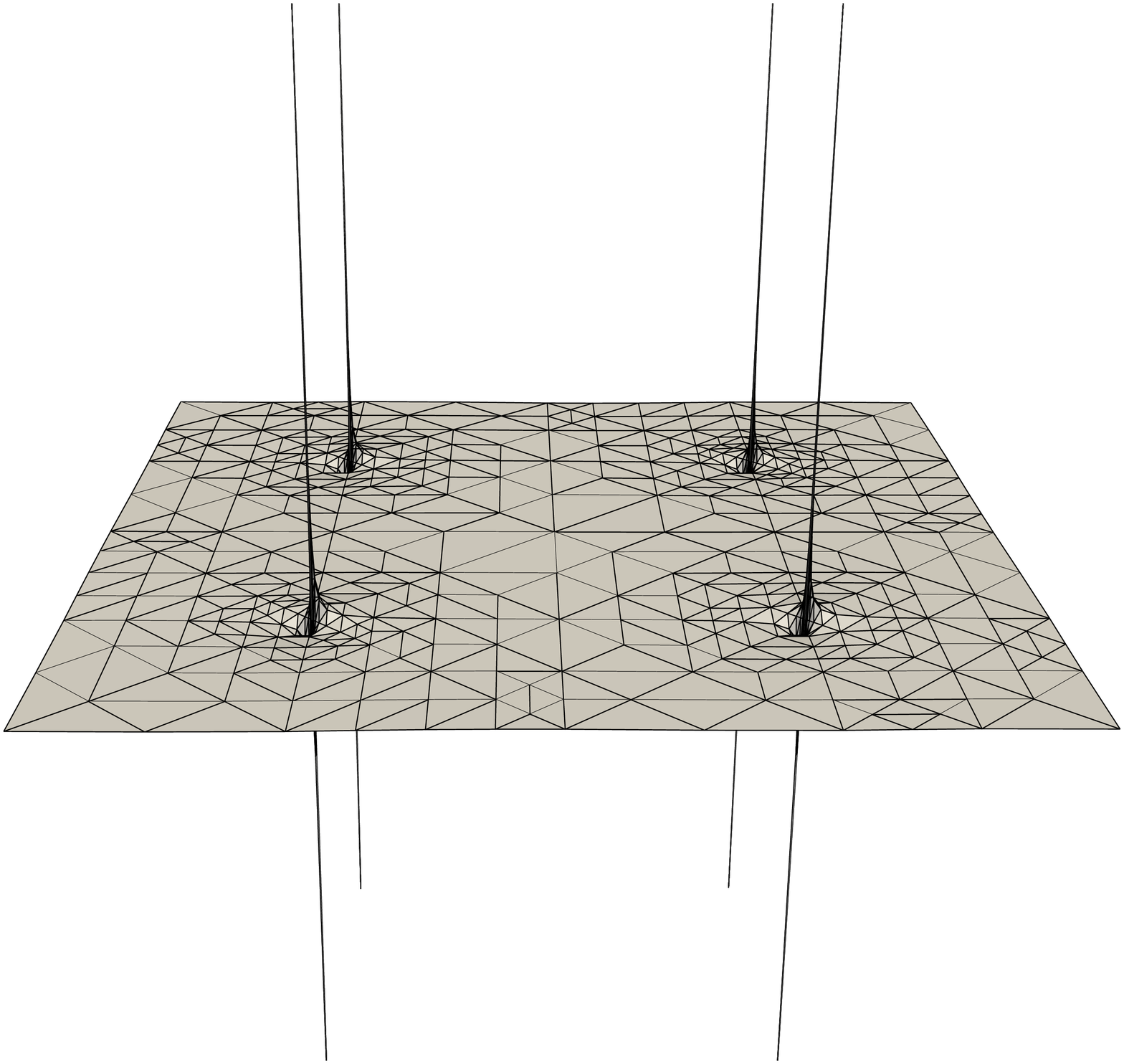}\\
 \tiny{$|p_{\T}|$}\\
 \includegraphics[trim={0 0 0 0},clip,width=3cm,height=3cm,scale=0.7]{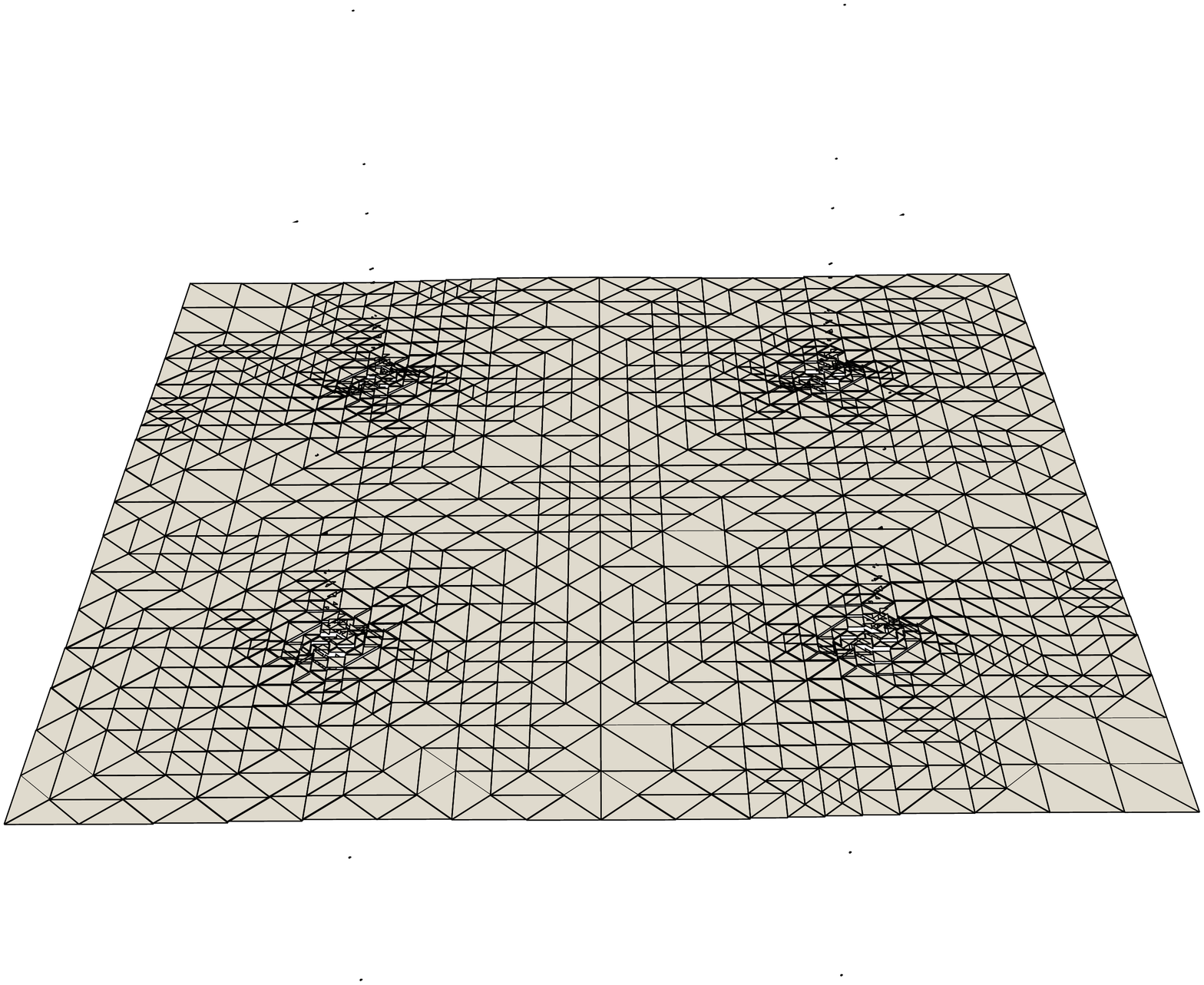}\\
 \tiny{$|p_{\T}|$}
 \end{minipage}
 \begin{minipage}{0.25\textwidth}\centering
 \includegraphics[trim={0 0 0 0},clip,width=3cm,height=3cm,scale=0.7]{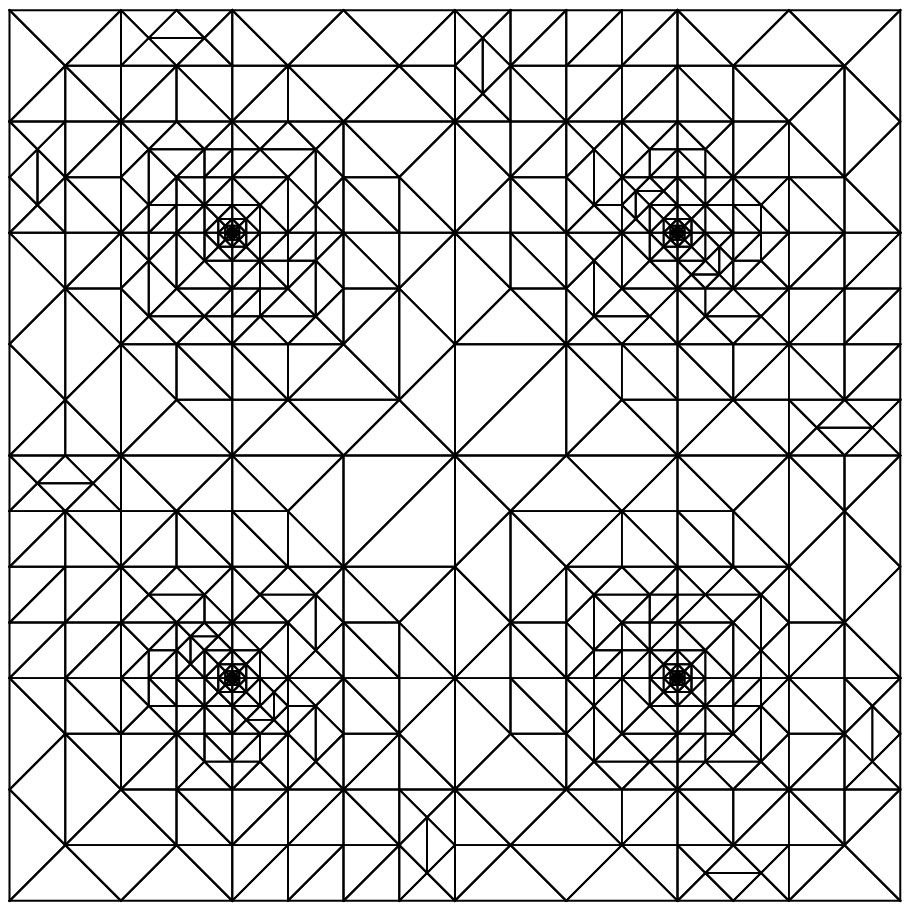}\\
 \tiny{$10^{th}$ adaptive refinement}\\
 \includegraphics[trim={0 0 0 0},clip,width=3cm,height=3cm,scale=0.7]{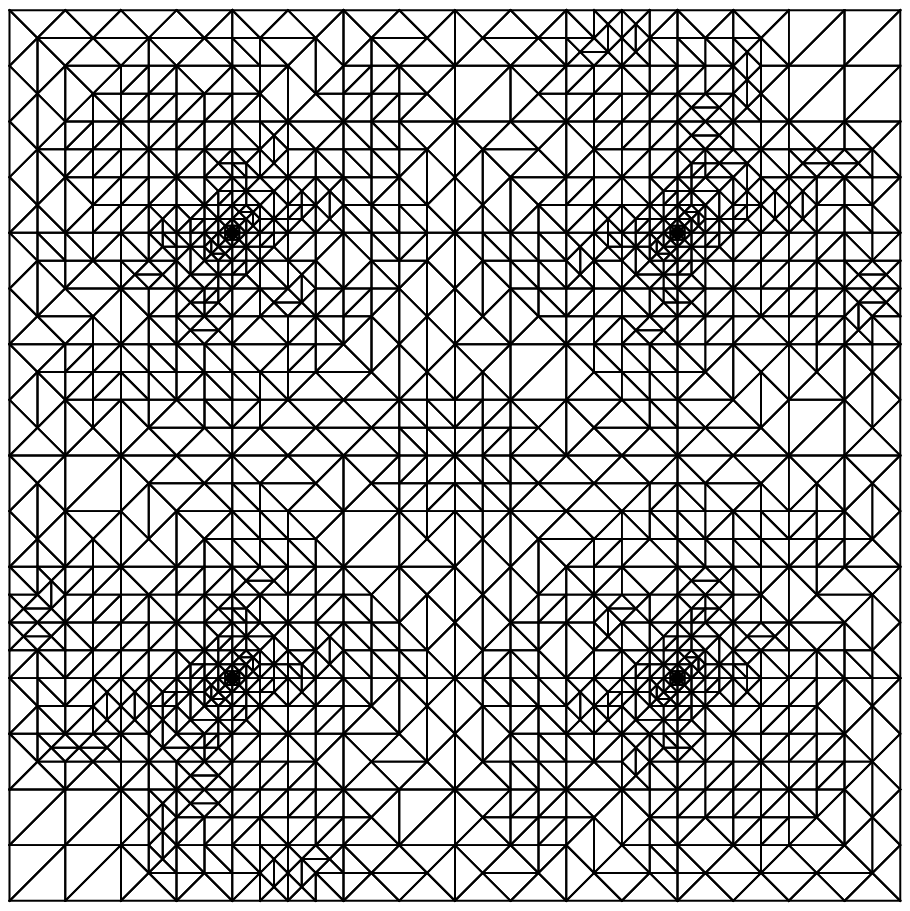}\\
 \tiny{$10^{th}$ adaptive refinement}
 \end{minipage}
 \begin{minipage}{0.25\textwidth}\centering
 \psfrag{error - total}{\large $\|(\boldsymbol{e}_{\bu},e_{p})\|_{\mathcal{X}}$}
 \psfrag{estimator}{\large $\mathscr{D}_{1.5}$}
 \psfrag{rate(h2)}{$\textsf{Ndof}^{-1}$}
 \includegraphics[trim={0 0 0 0},clip,width=3cm,height=3cm,scale=0.55]{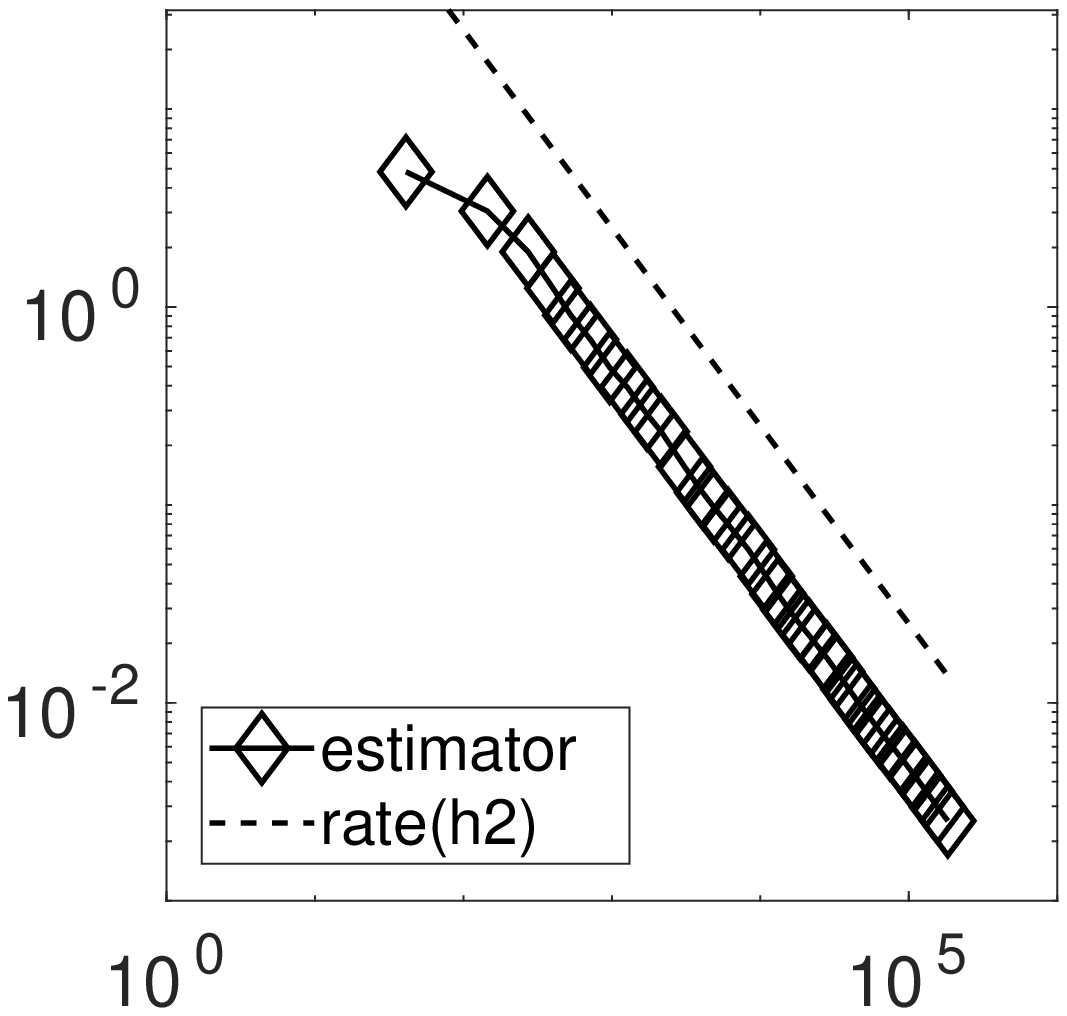}\\
  \psfrag{estimator}{\large $\mathcal{D}_{1.5,\mathrm{stab}}$}
 \psfrag{rate(h2)}{$\textsf{Ndof}^{-1/2}$}
 \includegraphics[trim={0 0 0 0},clip,width=3cm,height=3cm,scale=0.55]{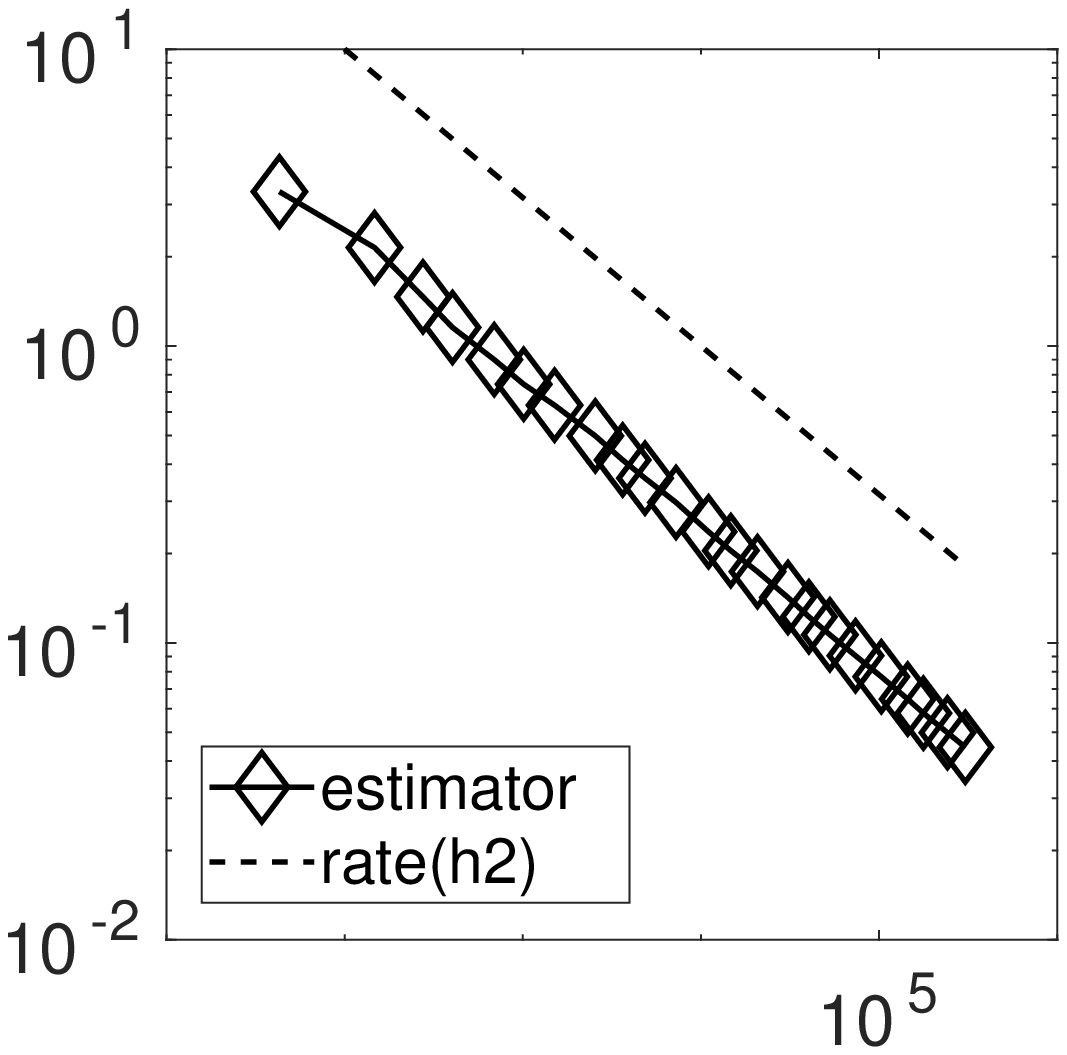}
 \end{minipage}
 \end{flushleft}
 \caption{Example 3: Finite element approximations of $|\bu_{\T}|$ and $p_{\T}$, the meshes obtained after $10$ adaptive refinements, and the experimental rates of convergence when Taylor--Hood approximation is used (top) and when the low--order stabilized approximation is considered (bottom).}
 \label{fig:ex3th-st}
 \end{figure}
 
In Figure \ref{fig:ex3th-st}, we present the results obtained by the \textbf{Algorithm} \ref{Algorithm} when is driven by $\mathscr{D}_{1.5}$ (Taylor--Hood approximation) and $\mathcal{D}_{1.5,\mathrm{stab}}$ (low--order stabilized approximation). We present the finite element approximations of $|\bu_{\T}|$ and $p_{\T}$ and the final meshes obtained by the aforementioned schemes. It can be observed that the proposed error estimators attained optimal experimental rates of convergence, under the natural proposed modification of the involved Muckenhoupt weight. 
\subsection{Example 4: The fundamental solution of a Stokes flow}

In order to measure the experimental rates of convergence for the total error \eqref{eq:error_total}, we invoke the fundamental solution of the Stokes problem; even when we violate the assumption of imposing homogeneous boundary conditions. For a delta source $\delta_{z}$, located at $z=(x_{0} \, , \, y_{0})^\intercal \in \Omega$, and a given constant vector $\bF \in\mathbb{R}^{2}$, we recall the fundamental solution for the Stokes problem \eqref{eq:StokesStrong} when $d=2$:
 \begin{equation}
  \label{eq:fundamental_solution}
  \bu(x,y):=\tilde{\bT}\cdot \bF,\quad p(x,y):=\bT\cdot\bF,
 \end{equation}
 where, if $\bx_{0}=(x-x_{0} , y-y_{0})^\intercal$, then
 \begin{gather*}
 \tilde{\bT}=-\frac{1}{4\pi}
 \left(
 \log|\bx_{0}|
 \left[
 \begin{array}{cc}
 1 & 0\\
 0 & 1
 \end{array}
 \right]
 -
 \frac{1}{|\bx_{0}|^{2}}
 \left[
 \begin{array}{cc}
 (x-x_{0})^{2} & (x-x_{0})(y-y_{0})\\
 (x-x_{0})(y-y_{0}) & (y-y_{0})^{2}
 \end{array}
 \right]
 \right),
 \\
 \bT =
 \frac{\bx_{0}}{2\pi|\bx_{0}|^{2}}.
 \end{gather*}

We consider $\Omega=(0,1)^{2}$, $z=(0.5,0.5)^\intercal$ and $\bF=(1 ,1)^\intercal$ in problem \eqref{eq:StokesStrong}. We fix the exponent of the Muckenhoupt weight in \eqref{distance_A2} as $\alpha=1.5$. The solution of this problem is thus given by \eqref{eq:fundamental_solution}.

In Figure \ref{fig:ex4th-st}, we present the finite element approximations of $|\bu_{\T}|$ and $p_{\T}$ which were obtained after $10$ adaptive refinements, together with the final mesh. We also present the experimental rates of convergence for the total error $\|(\boldsymbol{e}_{\bu},e_{p})\|_{\mathcal{X}}$ and the error estimators $\E_{1.5}$ and $\mathcal{E}_{\alpha,\mathrm{stab}}$. It can be observed that optimal experimental rates of convergence are attained and that most of the adaptive refinement is concentrated around the delta source. Finally, in Figure \ref{fig:ex4th-st-alphas}, we present the experimental rate of convergence for the total error estimators $\E_{\alpha}$ and $\mathcal{E}_{\alpha,\mathrm{stab}}$ when $\alpha\in\{0.5,0.75,1,1.25,1.5\}$. It can be observed that, for all the cases that we have considered, optimal rates of convergence are attained.
 \begin{figure}[ht]
 \begin{flushleft}
 \begin{minipage}{0.25\textwidth}\centering
 \includegraphics[trim={0 0 0 0},clip,width=3cm,height=3cm,scale=0.7]{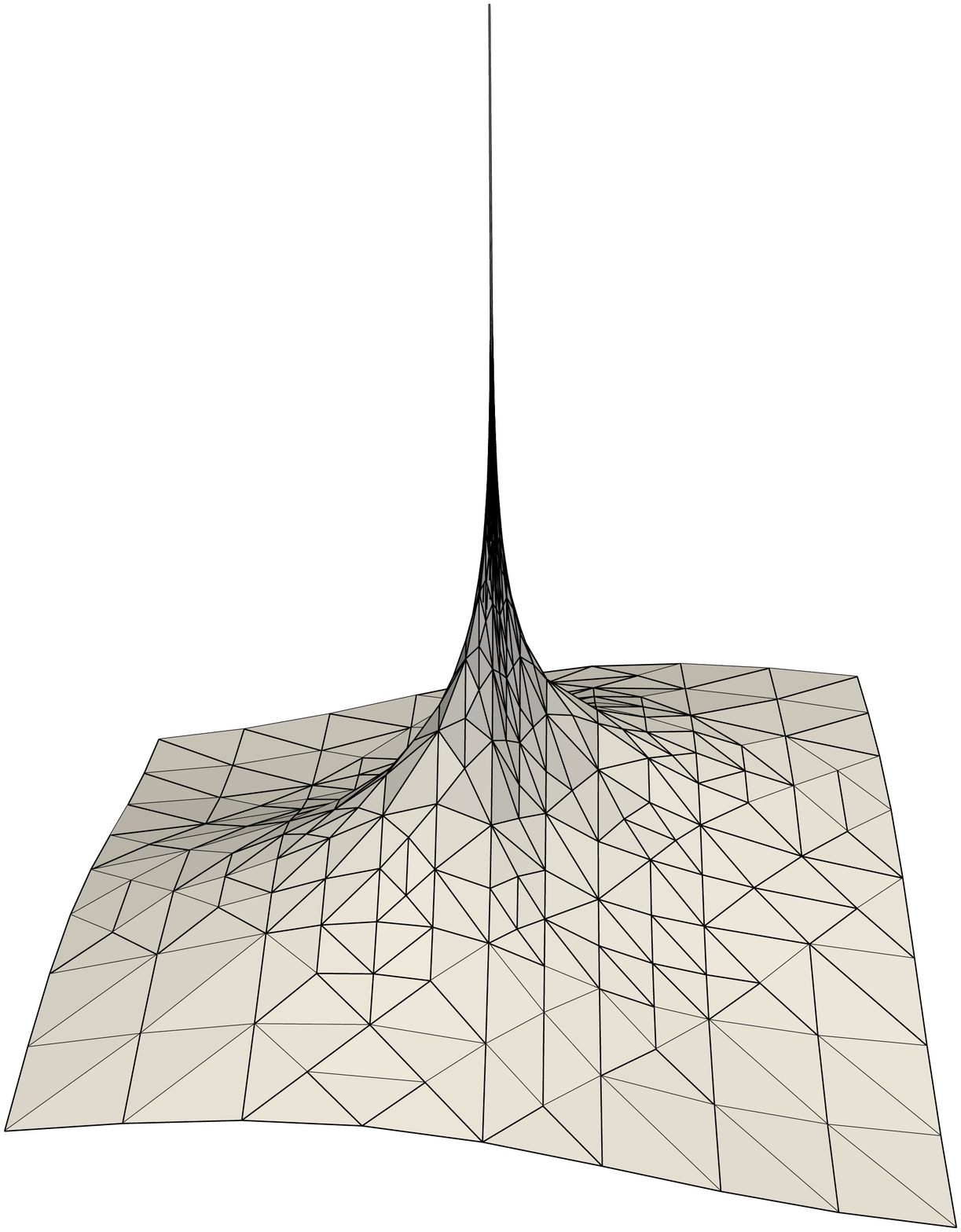}\\
 \tiny{$|\bu_{\T}|$}\\
 \includegraphics[trim={0 0 0 0},clip,width=3cm,height=3cm,scale=0.7]{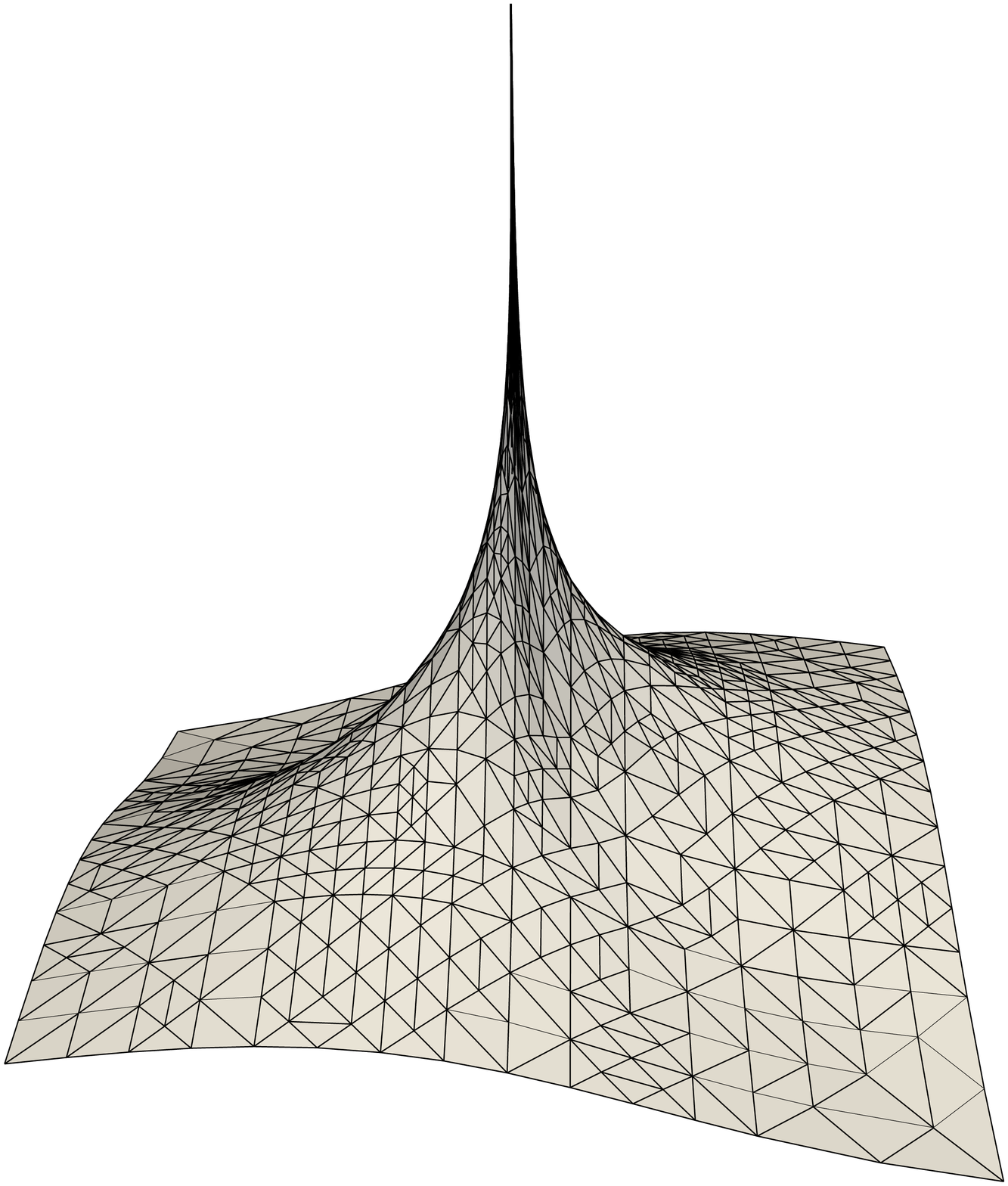}\\
 \tiny{$|\bu_{\T}|$}
 \end{minipage}
 \begin{minipage}{0.25\textwidth}\centering
 \includegraphics[trim={0 0 0 0},clip,width=3cm,height=3cm,scale=0.7]{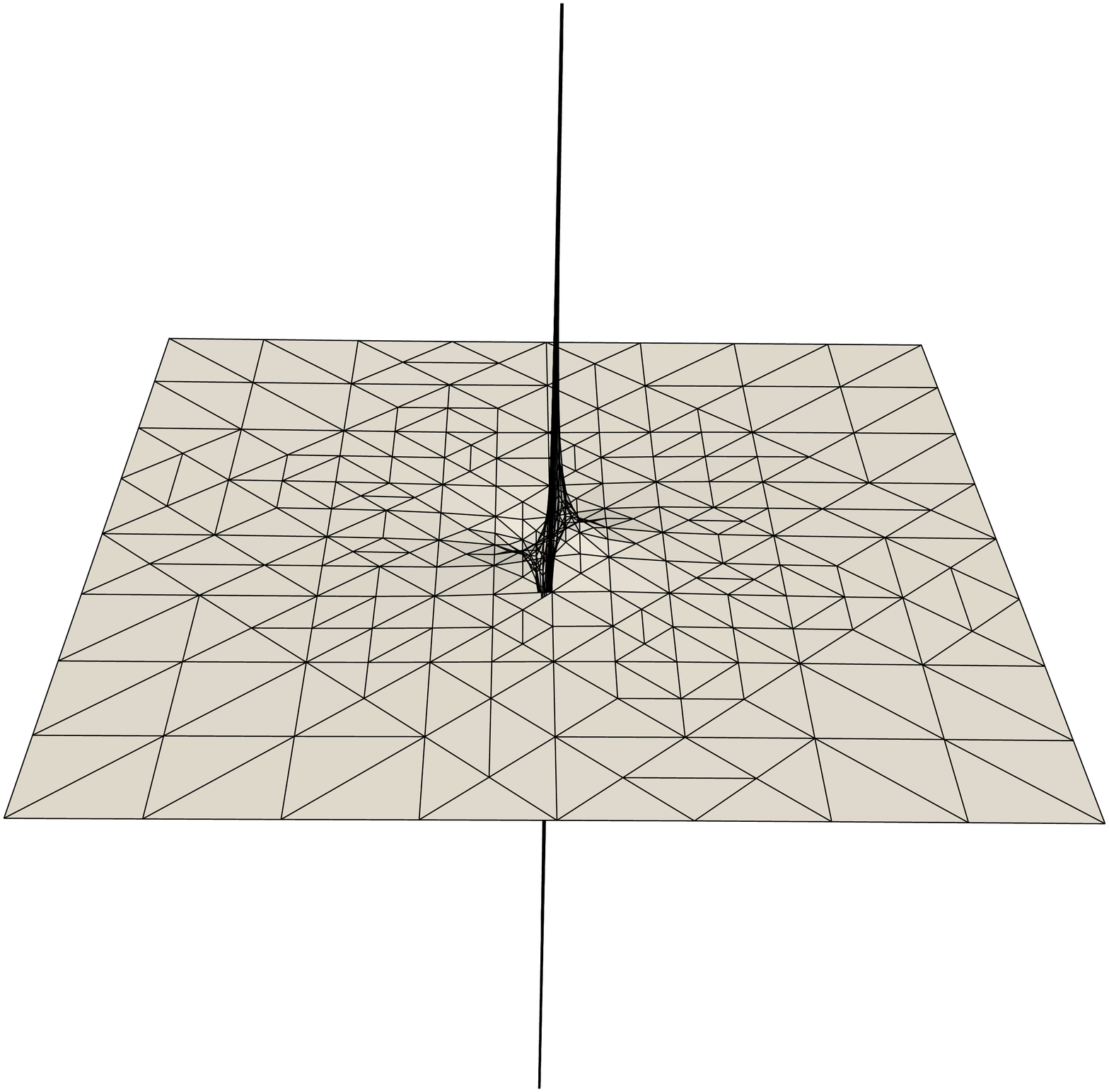}\\
 \tiny{$|p_{\T}|$}\\
 \includegraphics[trim={0 0 0 0},clip,width=3cm,height=3cm,scale=0.7]{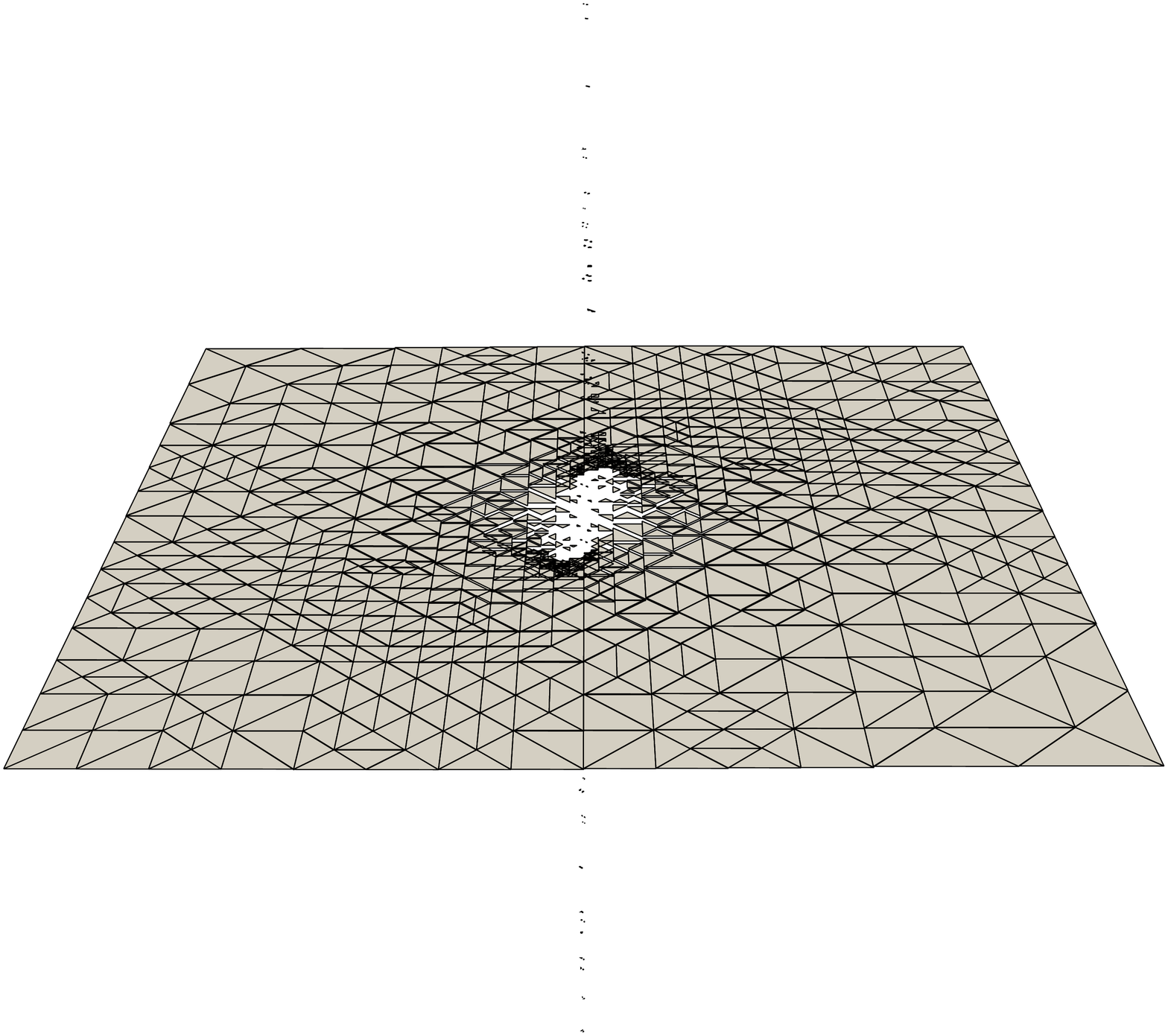}\\
 \tiny{$|p_{\T}|$}
 \end{minipage}
 \begin{minipage}{0.25\textwidth}\centering
 \includegraphics[trim={0 0 0 0},clip,width=3cm,height=3cm,scale=0.7]{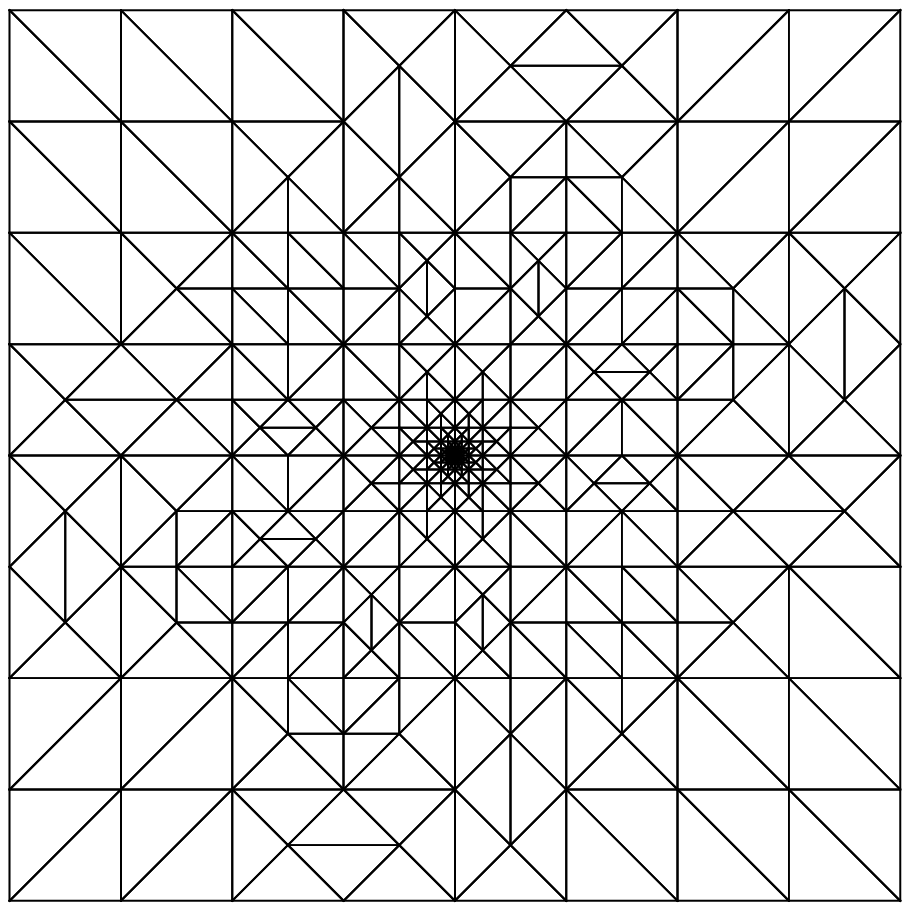}\\
 \tiny{$10^{th}$ adaptive refinement}\\
 \includegraphics[trim={0 0 0 0},clip,width=3cm,height=3cm,scale=0.7]{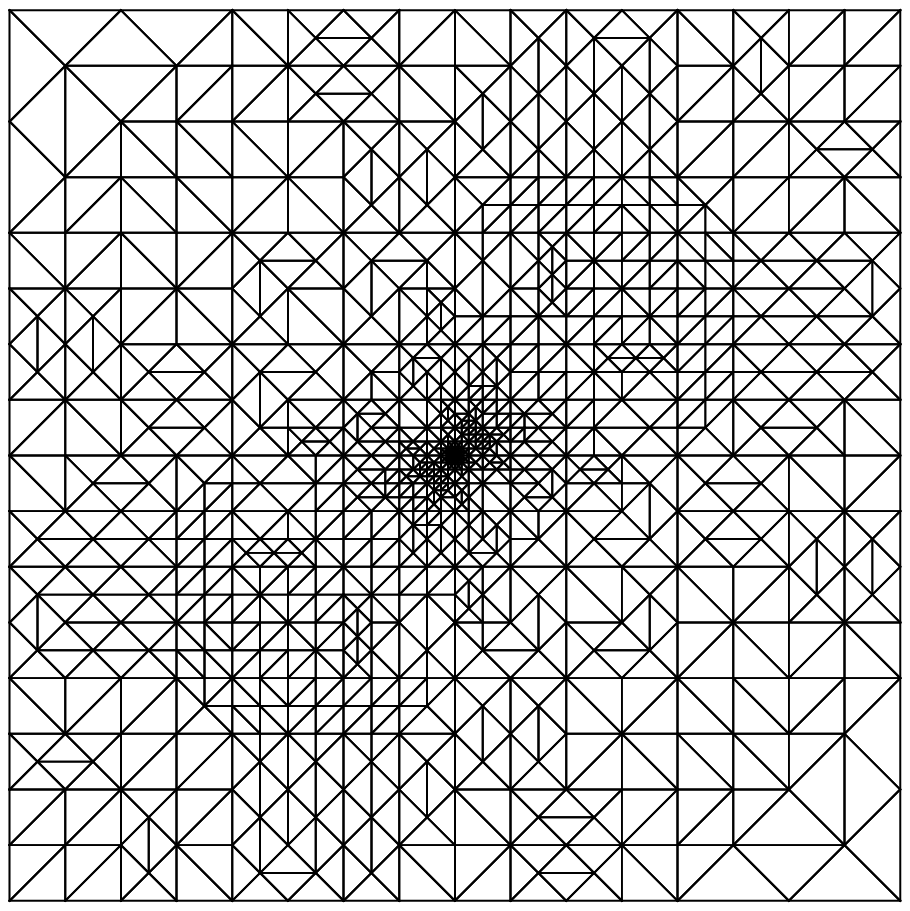}\\
 \tiny{$10^{th}$ adaptive refinement}
 \end{minipage}
 \begin{minipage}{0.25\textwidth}\centering
 \psfrag{error tot}{\large $\|(\boldsymbol{e}_{\bu},e_{p})\|_{\mathcal{X}}$}
 \psfrag{estimator}{\large $\E_{1.5}$}
 \psfrag{rate(h2)}{$\textsf{Ndof}^{-1}$}
 \includegraphics[trim={0 0 0 0},clip,width=3cm,height=3cm,scale=0.7]{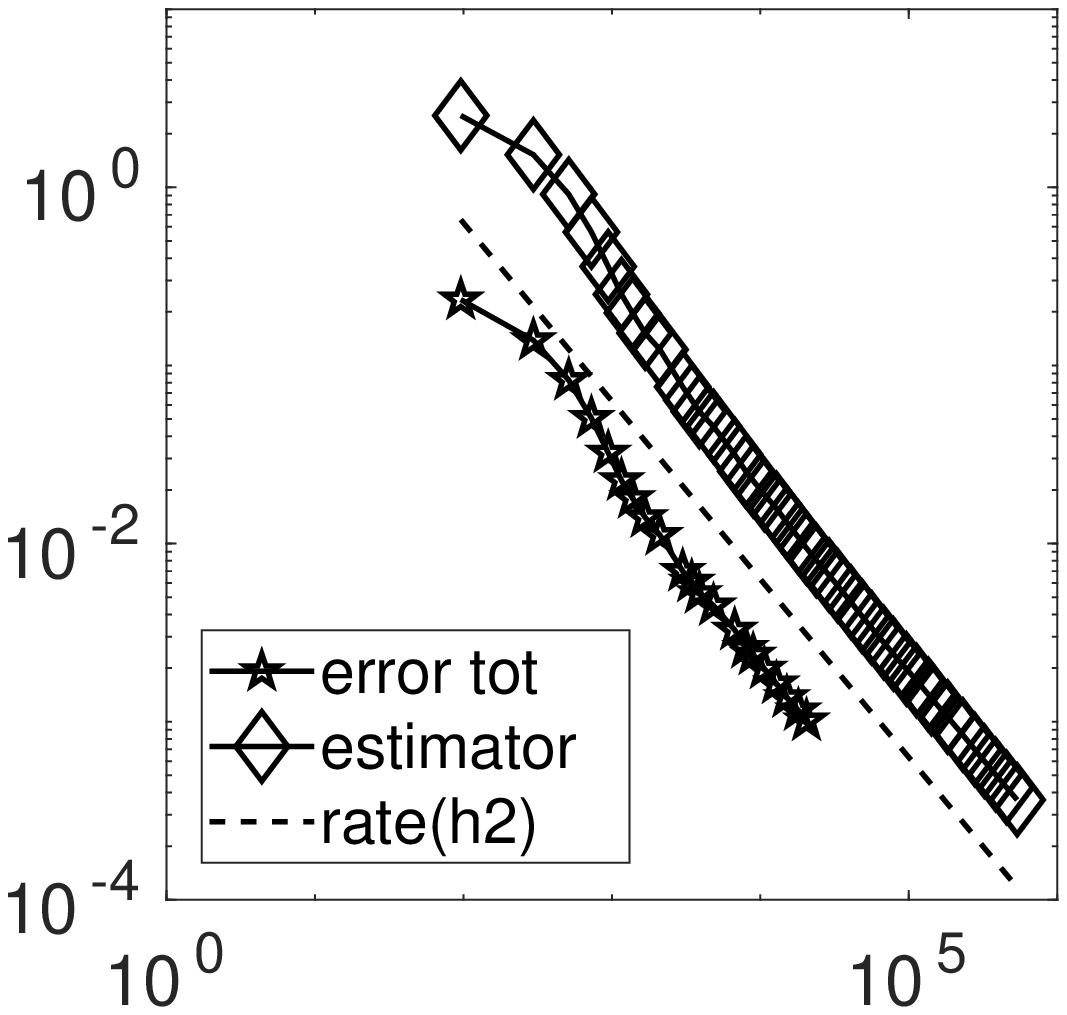}\\
 \psfrag{estimator}{\large $\mathcal{E}_{1.5,\mathrm{stab}}$}
 \psfrag{rate(h2)}{$\textsf{Ndof}^{-1/2}$}
 \includegraphics[trim={0 0 0 0},clip,width=3cm,height=3cm,scale=0.7]{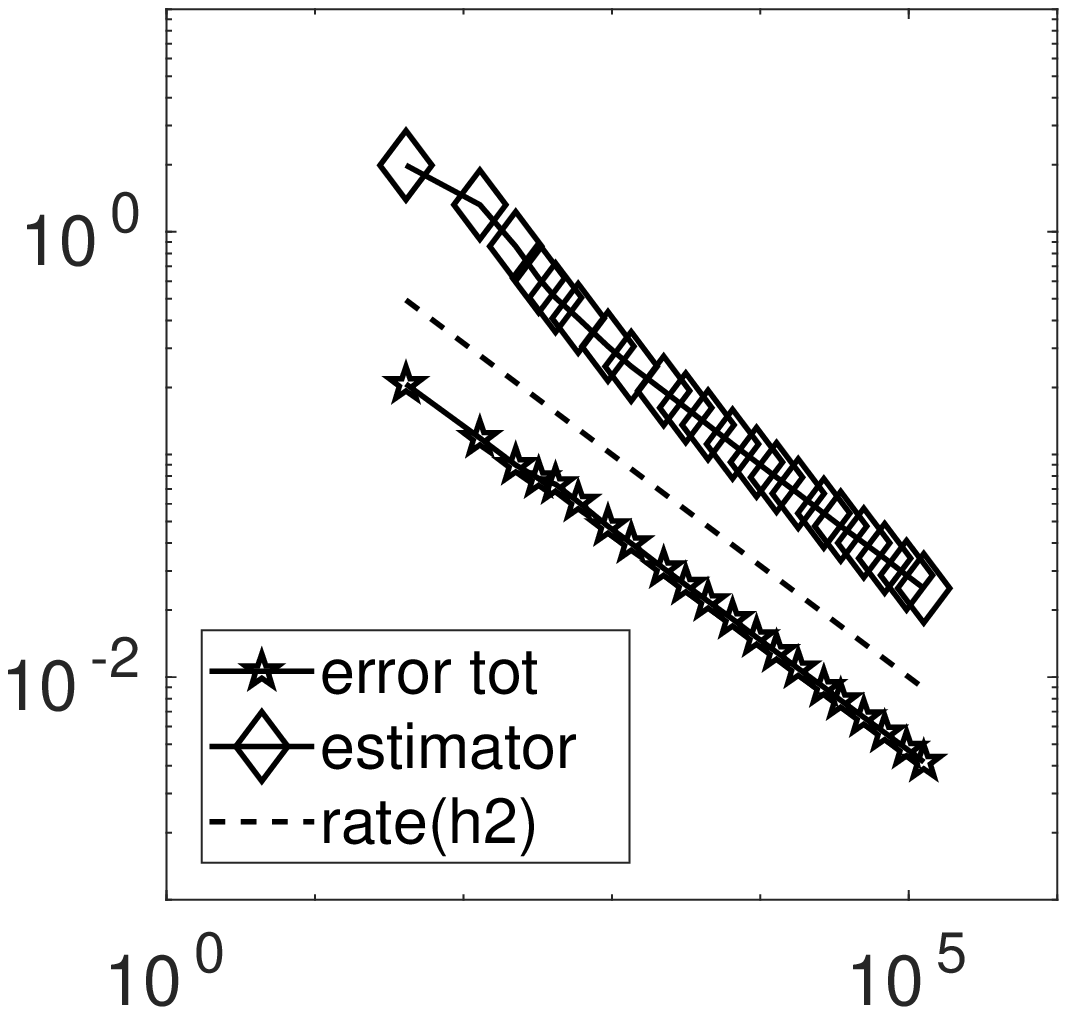}
 \end{minipage}
 \end{flushleft}
 \caption{Example 4: Finite element approximation of $|\bu_{\T}|$ and $p_{\T}$, the mesh obtained after $10$ adaptive refinements, and the experimental rate of convergence for the error estimator when Taylor--Hood approximation is used (top) and when the low--order stabilized approximation is considered (bottom) where $\alpha \in \{0.5,0.75,1,1.25,1.5 \}$.} 
 \label{fig:ex4th-st}
 \end{figure}
 
 \begin{figure}[h]
 \centering
 \psfrag{estima-1}{\large $\alpha=0.5$}
 \psfrag{estima-2}{\large $\alpha=0.75$}
 \psfrag{estima-3}{\large $\alpha=1$}
 \psfrag{estima-4}{\large $\alpha=1.25$}
 \psfrag{estima-5}{\large $\alpha=1.5$}
 \psfrag{rate(h2)}{$\textsf{Ndof}^{-1}$}
 \psfrag{error-1}{\large $\alpha=0.5$}
 \psfrag{error-2}{\large $\alpha=0.75$}
 \psfrag{error-3}{\large $\alpha=1$}
 \psfrag{error-4}{\large $\alpha=1.25$}
 \psfrag{error-5}{\large $\alpha=1.5$}
 \psfrag{errorrrr}{\large $\|(\boldsymbol{e}_{\bu},e_{p})\|_{\mathcal{X}}(\alpha)$}
  \psfrag{error varing alpha}{\Large $\|(\boldsymbol{e}_{\bu},e_{p})\|_{\mathcal{X}}$ varying $\alpha$}
   \psfrag{est varing alpha}{\Large $\E_{\alpha}$ varying $\alpha$} 
 \includegraphics[trim={0 0 0 0},clip,width=5cm,height=5cm,scale=0.6]{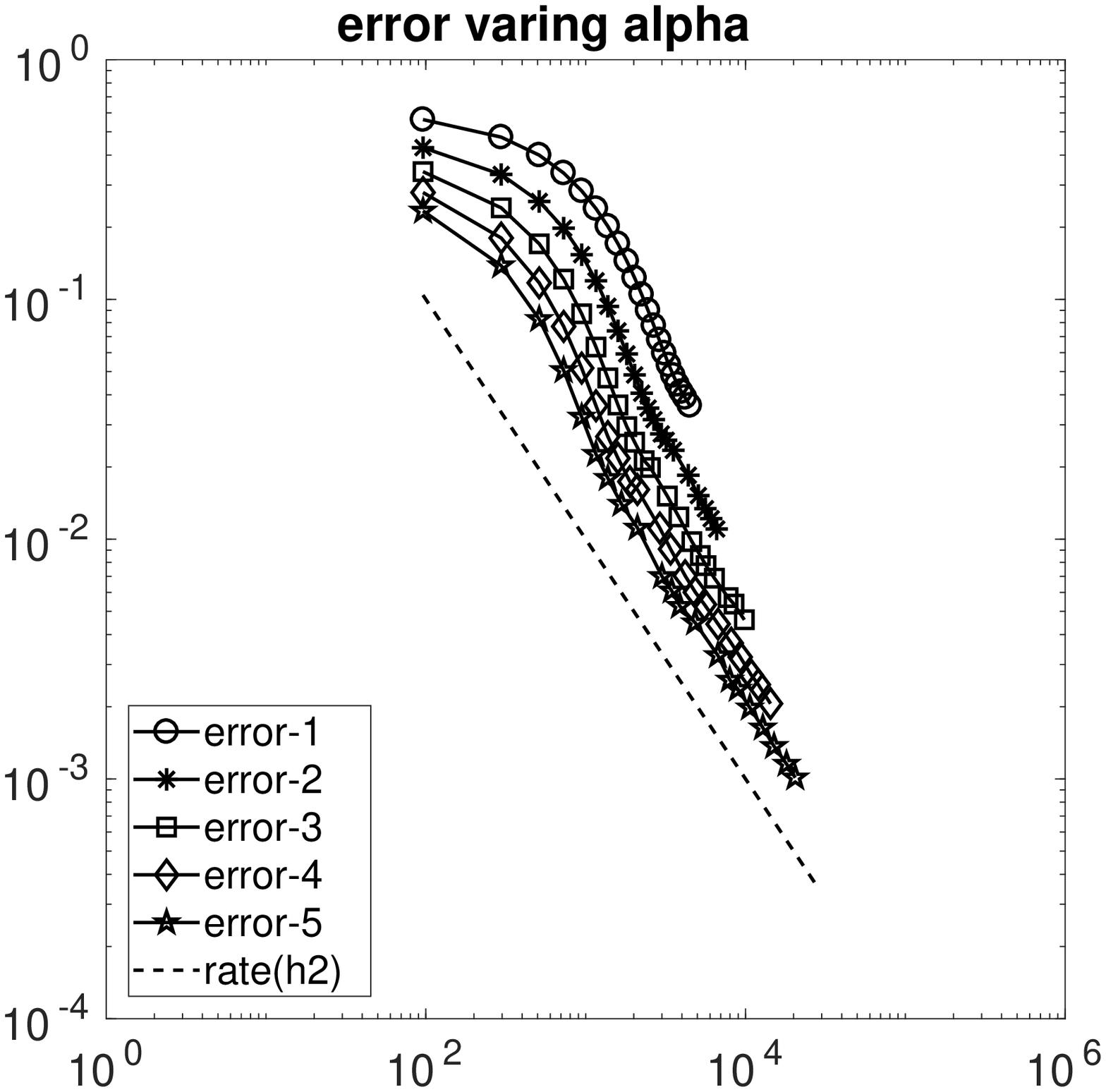}
 \psfrag{estimatorrrr}{\large $\E(\alpha)$}
 \includegraphics[trim={0 0 0 0},clip,width=5cm,height=5cm,scale=0.6]{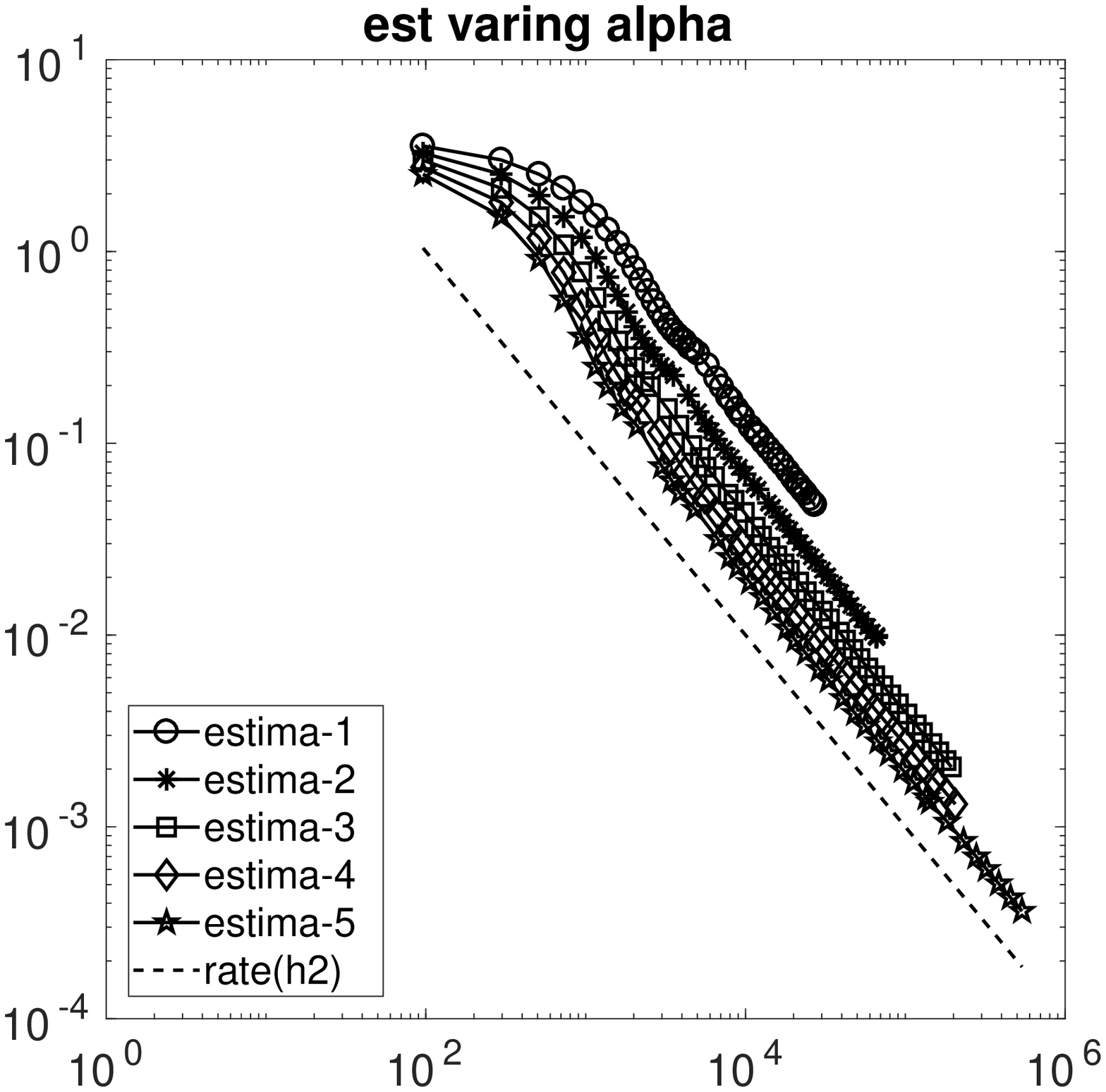}\\
  \psfrag{est varing alpha}{\Large $\mathcal{E}_{\alpha,\mathrm{stab}}$ varying $\alpha$} 
 \psfrag{rate(h2)}{$\textsf{Ndof}^{-1/2}$}
 \includegraphics[trim={0 0 0 0},clip,width=5cm,height=5cm,scale=0.6]{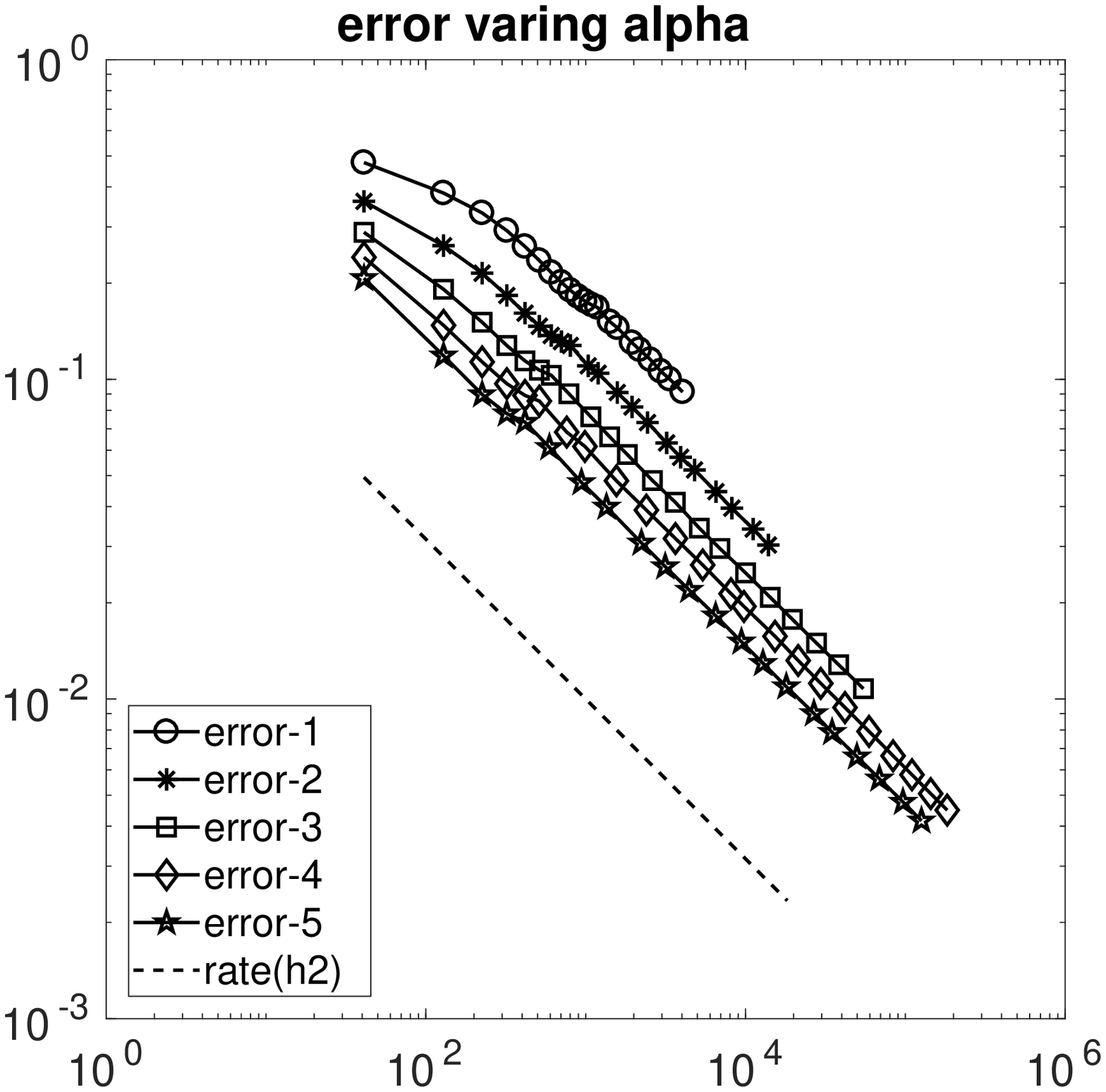}
 \psfrag{estimatorrrr}{\large $\E(\alpha,\mathrm{stab})$}
 \includegraphics[trim={0 0 0 0},clip,width=5cm,height=5cm,scale=0.6]{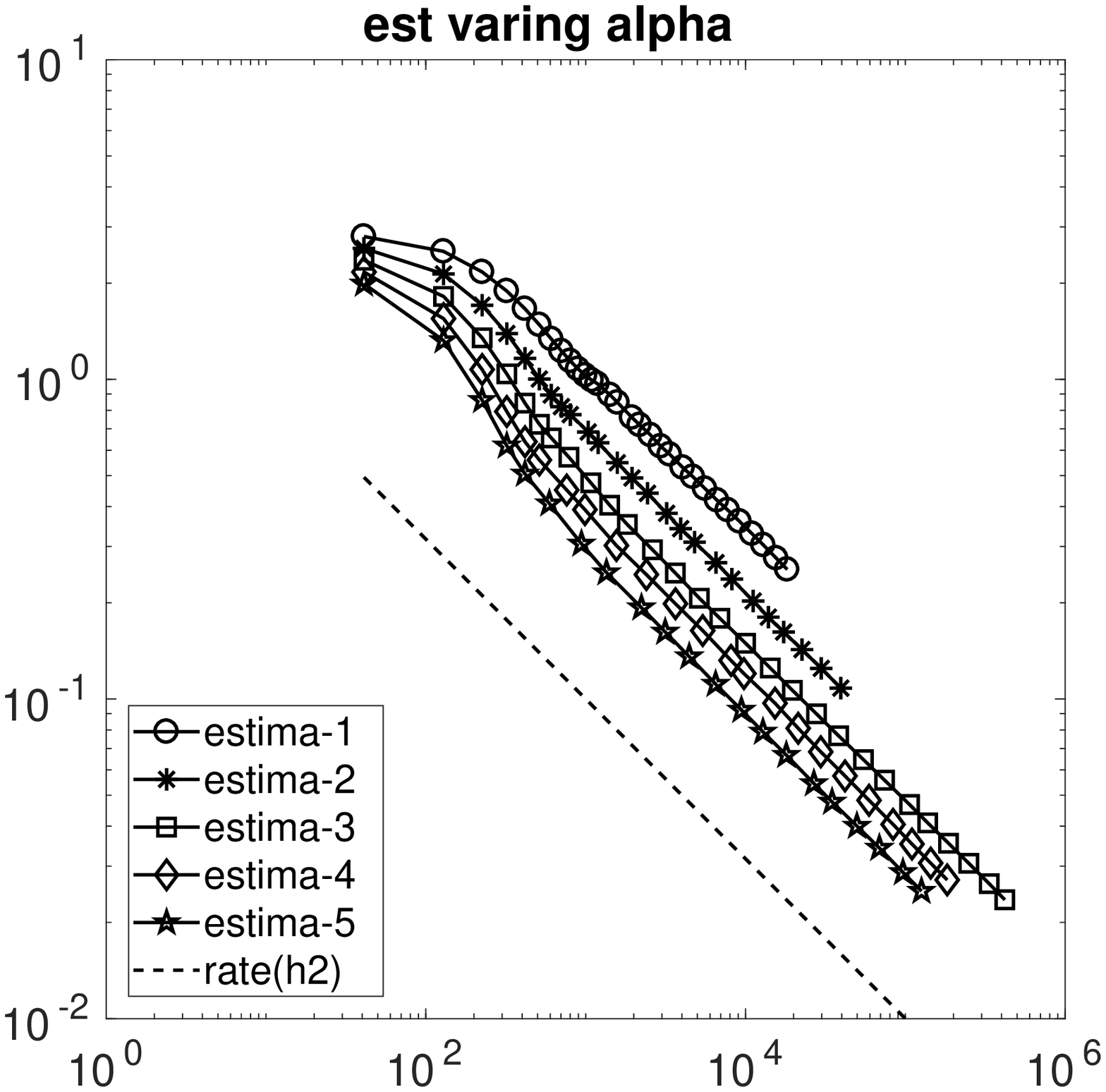}
 \caption{Example 4: Experimental rates of convergence for the total error $\|(\boldsymbol{e}_{\bu},e_{p})\|_{\mathcal{X}}$ and error estimators $\E_{\alpha}$ (Taylor--Hood approximation) and $\mathcal{E}_{\alpha,\mathrm{stab}}$ (low--order stabilized approximation).}
 \label{fig:ex4th-st-alphas}
 \end{figure}

\bibliographystyle{siamplain}
\bibliography{biblio}

\begin{thebibliography}{10}

\bibitem{AGM}
{\sc J.~P. Agnelli, E.~M. Garau, and P.~Morin}, {\em {\it {A} posteriori} error
  estimates for elliptic problems with {D}irac measure terms in weighted
  spaces}, ESAIM Math. Model. Numer. Anal., 48 (2014), pp.~1557--1581,
  \url{http://dx.doi.org/10.1051/m2an/2014010}.

\bibitem{MR3215609}
{\sc H.~Aimar, M.~Carena, R.~Dur\'an, and M.~Toschi}, {\em Powers of distances
  to lower dimensional sets as {M}uckenhoupt weights}, Acta Math. Hungar., 143
  (2014), pp.~119--137, \url{http://dx.doi.org/10.1007/s10474-014-0389-1}.

\bibitem{MR3679932}
{\sc A.~Allendes, E.~Ot\'arola, R.~Rankin, and A.~J. Salgado}, {\em Adaptive
  finite element methods for an optimal control problem involving {D}irac
  measures}, Numer. Math., 137 (2017), pp.~159--197,
  \url{http://dx.doi.org/10.1007/s00211-017-0867-9}.

\bibitem{MR1856597}
{\sc P.~R. Amestoy, I.~S. Duff, J.-Y. L'Excellent, and J.~Koster}, {\em A fully
  asynchronous multifrontal solver using distributed dynamic scheduling}, SIAM
  J. Matrix Anal. Appl., 23 (2001), pp.~15--41 (electronic),
  \url{http://dx.doi.org/10.1137/S0895479899358194}.

\bibitem{MR2202663}
{\sc P.~R. Amestoy, A.~Guermouche, J.-Y. L'Excellent, and S.~Pralet}, {\em
  Hybrid scheduling for the parallel solution of linear systems}, Parallel
  Comput., 32 (2006), pp.~136--156,
  \url{http://dx.doi.org/10.1016/j.parco.2005.07.004}.

\bibitem{MR799997}
{\sc D.~N. Arnold, F.~Brezzi, and M.~Fortin}, {\em A stable finite element for
  the {S}tokes equations}, Calcolo, 21 (1984), pp.~337--344 (1985),
  \url{https://doi.org/10.1007/BF02576171}.

\bibitem{MR972452}
{\sc C.~Bernardi, C.~Canuto, and Y.~Maday}, {\em Generalized inf-sup conditions
  for {C}hebyshev spectral approximation of the {S}tokes problem}, SIAM J.
  Numer. Anal., 25 (1988), pp.~1237--1271,
  \url{http://dx.doi.org/10.1137/0725070}.

\bibitem{bochev2009least}
{\sc P.~B. Bochev and M.~D. Gunzburger}, {\em Least-squares finite element
  methods}, vol.~166 of Applied Mathematical Sciences, Springer, New York,
  2009, \url{https://doi.org/10.1007/b13382}.

\bibitem{BrettElliott}
{\sc C.~Brett, A.~Dedner, and C.~Elliott}, {\em Optimal control of elliptic
  {PDE}s at points}, IMA J. Numer. Anal., 36 (2016), pp.~1015--1050,
  \url{http://dx.doi.org/10.1093/imanum/drv040}.

\bibitem{MR3582412}
{\sc M.~Bul{\'\i}{\v c}ek, J.~Burczak, and S.~Schwarzacher}, {\em A unified
  theory for some non-{N}ewtonian fluids under singular forcing}, SIAM J. Math.
  Anal., 48 (2016), pp.~4241--4267, \url{https://doi.org/10.1137/16M1073881}.

\bibitem{MR3449612}
{\sc L.~Chang, W.~Gong, and N.~Yan}, {\em Numerical analysis for the
  approximation of optimal control problems with pointwise observations}, Math.
  Methods Appl. Sci., 38 (2015), pp.~4502--4520,
  \url{http://dx.doi.org/10.1002/mma.2861}.

\bibitem{CiarletBook}
{\sc P.~G. Ciarlet}, {\em The finite element method for elliptic problems},
  SIAM, Philadelphia, PA, 2002,
  \url{http://dx.doi.org/10.1137/1.9780898719208}.

\bibitem{MR0400739}
{\sc P.~Cl\'ement}, {\em Approximation by finite element functions using local
  regularization}, Rev. Fran\c{c}aise Automat. Informat. Recherche
  Op\'erationnelle S\'er., 9 (1975), pp.~77--84.

\bibitem{MR1800316}
{\sc J.~Duoandikoetxea}, {\em Fourier analysis}, vol.~29 of Graduate Studies in
  Mathematics, American Mathematical Society, Providence, RI, 2001.
\newblock Translated and revised from the 1995 Spanish original by David
  Cruz-Uribe.

\bibitem{MR2164092}
{\sc R.~G. Dur\'an and A.~L. Lombardi}, {\em Error estimates on anisotropic
  {$Q_1$} elements for functions in weighted {S}obolev spaces}, Math. Comp., 74
  (2005), pp.~1679--1706, \url{https://doi.org/10.1090/S0025-5718-05-01732-1}.

\bibitem{Guermond-Ern}
{\sc A.~Ern and J.-L. Guermond}, {\em Theory and practice of finite elements},
  vol.~159 of Applied Mathematical Sciences, Springer-Verlag, New York, 2004.

\bibitem{MR643158}
{\sc E.~B. Fabes, C.~E. Kenig, and R.~P. Serapioni}, {\em The local regularity
  of solutions of degenerate elliptic equations}, Comm. Partial Differential
  Equations, 7 (1982), pp.~77--116,
  \url{https://doi.org/10.1080/03605308208820218}.

\bibitem{MR1601373}
{\sc R.~Farwig and H.~Sohr}, {\em Weighted {$L^q$}-theory for the {S}tokes
  resolvent in exterior domains}, J. Math. Soc. Japan, 49 (1997), pp.~251--288,
  \url{https://doi.org/10.2969/jmsj/04920251}.

\bibitem{MR2808162}
{\sc G.~P. Galdi}, {\em An introduction to the mathematical theory of the
  {N}avier-{S}tokes equations}, Springer Monographs in Mathematics, Springer,
  New York, second~ed., 2011, \url{https://doi.org/10.1007/978-0-387-09620-9}.
\newblock Steady-state problems.

\bibitem{MR851383}
{\sc V.~Girault and P.-A. Raviart}, {\em Finite element methods for
  {N}avier-{S}tokes equations}, vol.~5 of Springer Series in Computational
  Mathematics, Springer-Verlag, Berlin, 1986,
  \url{https://doi.org/10.1007/978-3-642-61623-5}.
\newblock Theory and algorithms.

\bibitem{MR2491902}
{\sc V.~Gol'dshtein and A.~Ukhlov}, {\em Weighted {S}obolev spaces and
  embedding theorems}, Trans. Amer. Math. Soc., 361 (2009), pp.~3829--3850,
  \url{http://dx.doi.org/10.1090/S0002-9947-09-04615-7}.

\bibitem{hood1974navier}
{\sc P.~Hood and C.~Taylor}, {\em Navier-{S}tokes equations using mixed
  interpolation}, Finite element methods in flow problems,  (1974),
  pp.~121--132.

\bibitem{hughes1986new}
{\sc T.~J.~R. Hughes, L.~P. Franca, and M.~Balestra}, {\em A new finite element
  formulation for computational fluid dynamics. {V}. {C}ircumventing the
  {B}abu\v ska-{B}rezzi condition: a stable {P}etrov-{G}alerkin formulation of
  the {S}tokes problem accommodating equal-order interpolations}, Comput.
  Methods Appl. Mech. Engrg., 59 (1986), pp.~85--99,
  \url{https://doi.org/10.1016/0045-7825(86)90025-3}.

\bibitem{john2016finite}
{\sc V.~John}, {\em Finite element methods for incompressible flow problems},
  vol.~51 of Springer Series in Computational Mathematics, Springer, Cham,
  2016, \url{https://doi.org/10.1007/978-3-319-45750-5}.

\bibitem{MR1740398}
{\sc D.~Kay and D.~Silvester}, {\em A posteriori error estimation for
  stabilized mixed approximations of the {S}tokes equations}, SIAM J. Sci.
  Comput., 21 (1999/00), pp.~1321--1336.

\bibitem{KMR}
{\sc V.~Kozlov, V.~Maz'ya, and J.~Rossmann}, {\em {Elliptic boundary value
  problems in domains with point singularities}}, American Mathematical
  Society, Providence, Rhode Island, USA, 1997.

\bibitem{Lacouture2015187}
{\sc L.~Lacouture}, {\em A numerical method to solve the stokes problem with a
  punctual force in source term}, Comptes Rendus Mécanique, 343 (2015),
  pp.~187 -- 191,
  \url{http://dx.doi.org/http://dx.doi.org/10.1016/j.crme.2014.09.008}.

\bibitem{MR0293384}
{\sc B.~Muckenhoupt}, {\em Weighted norm inequalities for the {H}ardy maximal
  function}, Trans. Amer. Math. Soc., 165 (1972), pp.~207--226,
  \url{https://doi.org/10.2307/1995882}.

\bibitem{MR0163054}
{\sc J.~Ne{\v c}as}, {\em Sur une m\'ethode pour r\'esoudre les \'equations aux
  d\'eriv\'ees partielles du type elliptique, voisine de la variationnelle},
  Ann. Scuola Norm. Sup. Pisa (3), 16 (1962), pp.~305--326.

\bibitem{MR0227584}
{\sc J.~r. Ne{\v c}as}, {\em Les m\'ethodes directes en th\'eorie des
  \'equations elliptiques}, Masson et Cie, \'Editeurs, Paris; Academia,
  \'Editeurs, Prague, 1967.

\bibitem{NOS3}
{\sc R.~H. Nochetto, E.~Ot\'arola, and A.~J. Salgado}, {\em Piecewise
  polynomial interpolation in {M}uckenhoupt weighted {S}obolev spaces and
  applications}, Numer. Math., 132 (2016), pp.~85--130,
  \url{http://dx.doi.org/10.1007/s00211-015-0709-6}.

\bibitem{NSV:09}
{\sc R.~H. Nochetto, K.~G. Siebert, and A.~Veeser}, {\em Theory of adaptive
  finite element methods: an introduction}, in Multiscale, nonlinear and
  adaptive approximation, Springer, 2009,
  \url{http://dx.doi.org/10.1007/978-3-642-03413-8_12}.

\bibitem{OS:17infsup}
{\sc E.~Ot\'arola and A.~J. Salgado}, {\em The {P}oisson problem in noncovex,
  {L}ipschitz polytopes.}
\newblock arXiv:1711.08542.

\bibitem{MR2454024}
{\sc H.-G. Roos, M.~Stynes, and L.~Tobiska}, {\em Robust numerical methods for
  singularly perturbed differential equations}, vol.~24 of Springer Series in
  Computational Mathematics, Springer-Verlag, Berlin, second~ed., 2008.
\newblock Convection-diffusion-reaction and flow problems.

\bibitem{MR1011446}
{\sc L.~R. Scott and S.~Zhang}, {\em Finite element interpolation of nonsmooth
  functions satisfying boundary conditions}, Math. Comp., 54 (1990),
  pp.~483--493, \url{https://doi.org/10.2307/2008497}.

\bibitem{MR1774162}
{\sc B.~O. Turesson}, {\em Nonlinear potential theory and weighted {S}obolev
  spaces}, vol.~1736 of Lecture Notes in Mathematics, Springer-Verlag, Berlin,
  2000, \url{http://dx.doi.org/10.1007/BFb0103908}.

\bibitem{MR993474}
{\sc R.~Verf\"urth}, {\em A posteriori error estimators for the {S}tokes
  equations}, Numer. Math., 55 (1989), pp.~309--325,
  \url{https://doi.org/10.1007/BF01390056}.

\bibitem{Verfurth}
{\sc R.~Verf\"urth}, {\em A posteriori error estimation techniques for finite
  element methods}, Numerical Mathematics and Scientific Computation, Oxford
  University Press, Oxford, 2013,
  \url{https://doi.org/10.1093/acprof:oso/9780199679423.001.0001}.

\end{thebibliography}
\end{document}